
\documentclass[11pt,reqno,a4paper]{amsart}

\usepackage{amsaddr}

\usepackage[utf8]{inputenc}
\usepackage[T1]{fontenc}
\usepackage{lmodern}
\usepackage[ngerman,english]{babel}
\usepackage{microtype}
\usepackage{color}

\usepackage[pdftex]{graphicx}
\usepackage{latexsym}
\usepackage{amsmath,amssymb,amsthm}
\usepackage{bbm}
\usepackage{mathtools}
\usepackage {enumitem}
\usepackage{tikz,pgfplots,subcaption}
\usepackage{caption}
\pgfplotsset{compat=1.16}
\captionsetup{font=footnotesize}

\usepackage[backend=biber,maxbibnames=99]{biblatex}  
\usepackage{csquotes}

\addbibresource{nlw-strang.bib}

\renewbibmacro*{publisher+location+date}{%
	\printlist{publisher}%
	\iflistundef{location}
	{\setunit*{\addcomma\space}}
	{\setunit*{\addcomma\space}}%
	\printlist{location}%
	\setunit*{\addcomma\space}%
	\usebibmacro{date}%
	\newunit}

\setlist[enumerate,1]{noitemsep}
\setlength{\topmargin}{-15mm}

\newtheorem{Satz}{Satz}[section]
\newtheorem{Proposition}[Satz]{Proposition} 
\newtheorem{Definition}[Satz]{Definition}     
\newtheorem{Lemma}[Satz]{Lemma}	
\newtheorem{Theorem}[Satz]{Theorem}	
\newtheorem{Corollary}[Satz]{Corollary}	
\theoremstyle{definition}

\newtheorem{Remark}[Satz]{Remark}
\newtheorem{Assumption}[Satz]{Assumption}

\numberwithin{equation}{section} 

\newcommand{\T}{\mathbb{T}} 
\newcommand{\R}{\mathbb{R}} 
\newcommand{\Z}{\mathbb{Z}} 
\newcommand{\N}{\mathbb{N}} 
\newcommand{\eps}{\varepsilon}

\newcommand{\dd}{\, \mathrm{d}}
\newcommand{\iu}{\mathrm{i}}

\newcommand{\supp}{\operatorname{supp}}
\newcommand{\sinc}{\operatorname{sinc}}

\makeatletter
\newcommand{\opnorm}{\@ifstar\@opnorms\@opnorm}
\newcommand{\@opnorms}[1]{%
	\left|\mkern-1.5mu\left|\mkern-1.5mu\left|
	#1
	\right|\mkern-1.5mu\right|\mkern-1.5mu\right|
}
\newcommand{\@opnorm}[2][]{%
	\mathopen{#1|\mkern-1.5mu#1|\mkern-1.5mu#1|}
	#2
	\mathclose{#1|\mkern-1.5mu#1|\mkern-1.5mu#1|}
}
\makeatother

\usepackage[open]{bookmark}

\usepackage{hyperref}

\allowdisplaybreaks

\title[Strang splitting for the 3D semilinear wave equation]{Error analysis of the Strang splitting for the 3D semilinear wave equation with finite-energy data}
\author{Maximilian Ruff}
\address{Karlsruhe Institute of Technology, Department of Mathematics, Englerstraße 2, 76131 Karlsruhe, Germany}
\email{maximilian.ruff@kit.edu}
\thanks{Funded by the Deutsche Forschungsgemeinschaft (DFG, German Research Foundation) – Project-ID 258734477 – SFB 1173.}
\thanks{I thank Benjamin Dörich and Roland Schnaubelt for fruitful discussions. I also thank the referees for very helpful comments, which led to improvements of the presentation, and for pointing out the alternative approach mentioned in Remark \ref{RemIBPSpace}.} 
\subjclass[2020]{Primary 65M15; Secondary 35B33, 35L71, 65M12, 65M70.}
\keywords{Semilinear wave equation, Strang splitting, trigonometric integrator, error analysis, discrete Strichartz estimates, summation by parts}

\begin{document}
	\selectlanguage{english}
	\pagestyle{headings}
	\begin{abstract}
		We study a variant of the Strang splitting for the time integration of the semilinear wave equation under the finite-energy condition on the torus $\T^3$. In the case of a cubic nonlinearity, we show almost second-order convergence in $L^2$ and almost first-order convergence in $H^1$. 
		If the nonlinearity has a quartic form instead, we show {analogous convergence results, where the order is reduced by 1/2 in both cases}. To our knowledge these are the best convergence results available for the 3D cubic and quartic wave equations under the finite-energy condition. 
		Our approach relies on continuous- and discrete-time Strichartz estimates. We also make use of the integration and summation by parts formulas to exploit cancellations in the error terms. Moreover, error bounds for a full discretization using the Fourier pseudo-spectral method in space are given. Finally, we discuss a numerical example indicating the sharpness of our theoretical results.
	\end{abstract}

	\maketitle
	
	\section{Introduction} 

We study the time integration of the semilinear wave equation with power nonlinearity
\begin{equation}\begin{aligned} \label{NLW2} \partial_t^2 u -\Delta u +\mu u^{\alpha} &=0  , \quad (t,x) \in [0,T] \times \T^3, \\ 
		u(0)&=u^0, \qquad\partial_tu(0)=v^0,
\end{aligned}\end{equation}
where $\alpha \in \{2,3,4,5\}$ and we allow for both signs $\mu \in \{-1,1\}$. The initial data $(u^0,v^0)$ is assumed to belong to the physically natural energy space $H^1(\T^3) \times L^2(\T^3)$. To keep the presentation simpler, we mostly restrict ourselves to the model cases of powers $\alpha\in\{3,4\}$. 

It is well known that local wellposedness of \eqref{NLW2} (and its variants with nonlinearity $\mu|u|^{\alpha-1}u$ for $\alpha \in (1,5]$) can be shown by a fixed-point argument. If $\alpha \le 3$, the nonlinear terms can be controlled only using classical tools such as Sobolev embedding. In the case of higher powers $\alpha>3$, one has to exploit the dispersive character of the wave equation. A particular useful tool are the Strichartz estimates, which control mixed space-time $L^pL^q$ norms of solutions to the linear wave equation in terms of the data. Thanks to the $L^p$ norm in time, one can choose the space integrability exponent $q$ larger than predicted by a fixed-time Sobolev embedding. This makes it possible to show local wellposedness of \eqref{NLW2} for powers up to the critical value $\alpha=5$, see, e.g., the monograph \cite{Tao2006}.

In this work we are interested in approximating the temporal evolution of \eqref{NLW2}. A natural choice for the time integration of such equations is the class of second-order trigonometric (or exponential) integrators, cf. chapter XIII.2.2 of \cite{HaiLubWan} for an overview. As explained in \cite{BuchholzGauckler}, these methods in one-step form can be interpreted as variants of the Strang splitting with additional filter functions in the nonlinear part. In the context of an ordinary differential equation with a globally Lipschitz continuous nonlinearity, error estimates for such schemes were derived in, e.g., \cite{Garcia,Grimm,HaiLubWan,BuchholzGauckler}. For the PDE \eqref{NLW2} with pure power nonlinearity, an error analysis was for the first time given in \cite{Gauckler}, but only in the one-dimensional case. The proof uses a similar strategy as the earlier work \cite{Lubich} for the nonlinear Schrödinger equation. It was shown that under the finite-energy assumption $(u,\partial_tu) \in H^1 \times L^2$, trigonometric integrators converge with order two in $L^2 \times H^{-1}$ and with order one in the energy space $H^1 \times L^2$ itself, even if no filter functions are used. In \cite{Averaged}, the same error bounds were shown in a more general setting which in particular allows for rough $L^\infty$ coefficients in the nonlinear part. This made it necessary to equip the schemes with suitable filter functions to avoid numerical resonances for certain step-sizes. The higher dimensional cases $d \in \{2,3\}$ were also considered in \cite{Averaged}, but only under the stronger regularity assumption $(u,\partial_tu) \in H^2 \times H^1$.

In the proofs of the one-dimensional results in \cite{Gauckler,Averaged}, it was crucially exploited that the Sobolev space $H^1$ forms an algebra. This is however not the case in higher dimensions, where the estimates for the nonlinear terms become more delicate. The first attempts to exploit Strichartz estimates in numerical analysis were made in the case of nonlinear Schrödinger equations, starting from \cite{NumDisp}. Subsequent works used discrete-time Strichartz estimates to show error bounds under low regularity assumptions, such as \cite{IgnatSplitting,ChoiKoh,ORS} for the nonlinear Schrödinger equation on $\R^d$ and \cite{ORSBourgain,Ji} in the case of nonlinear Schrödinger equation on the torus $\T^d$. In the latter case, the authors further made use of discrete Bourgain spaces. For the nonlinear wave equation \eqref{NLW2}, less literature is available in this context. Based on discrete-time Strichartz estimates, in \cite{RuffSchnaubelt} an error analysis of the Lie splitting for \eqref{NLW2} (with nonlinearity $\mu|u|^{\alpha-1}u$ for $\alpha \in [3,5]$) on the full space $\R^3$ was given, notably including the scaling-critical power $\alpha=5$. It was shown that under the assumption $(u,\partial_tu) \in H^1 \times L^2$, the scheme converges with optimal first order in $L^2 \times H^{-1}$.  

Recently, another class of methods to approximate the temporal evolution of nonlinear dispersive problems especially in low regularity gained a lot of attention, namely, the low-regularity integrators. See \cite{RS} for an overview. Due to an improved local error structure, such schemes can allow for higher convergence rates at low regularity than classical methods such as the Strang splitting. The authors in \cite{KleinGordon} proposed the corrected Lie splitting, which is a low-regularity integrator that can be applied to the nonlinear wave equation \eqref{NLW2}. It was shown that the corrected Lie splitting is second-order convergent in $H^1 \times L^2$ under the regularity condition $(u,\partial_tu) \in H^{1+d/4} \times H^{d/4}$ for dimensions $d \in \{1,2,3\}$. See also \cite{DiscSol} for an error analysis of the corrected Lie splitting in lower regularity {in the cases $d \in \{1,2\}$ with reduced convergence rates under a CFL-type condition}. However, the analyses from \cite{KleinGordon,DiscSol} do not apply {in our situation} since they {either} require a global Lipschitz condition on the nonlinearity (which is not satisfied by the polynomial $u^\alpha$), {or higher-regularity solutions satisfying $u(t)\in H^s$ with $s>d/2$.}

\subsection{{Our contributions}}

The purpose of this paper is, at first, to extend the discrete-time Strichartz estimates for the wave equation from \cite{RuffSchnaubelt} to the bounded domain $\T^3$. By exploiting the finite propagation speed of the wave equation, we obtain locally in time the same Strichartz estimates as on the full space $\R^3$. {This is in sharp contrast to the Schrödinger case (with infinite speed of propagation) where the Strichartz estimates on the torus are restricted compared to those on the full space, cf.\ \cite{Bourgain93}.} Moreover, we aim to prove optimal error bounds for a second-order scheme applied to \eqref{NLW2} under the finite-energy condition. In the important cubic case $\alpha=3$, we almost recover the optimal temporal second-order convergence {in $L^2 \times H^{-1}$. 
Here it is important that we make full use of all available regularity. In particular, besides the Strichartz estimates we at most apply sharp endpoint Sobolev embeddings such as $H^1(\T^3) \hookrightarrow L^6(\T^3)$, and we avoid the use of Hölder's inequality in time.
In the case of a more rapidly growing nonlinearity with $\alpha=4$, the second order can no longer be achieved. This type of order reduction is an interesting effect that apparently does not occur for nonlinear Schrödinger equations. The difference lies in the fact that the nonlinearity in the first-order formulation of the wave equation formally gains a derivative, which, however, can only be optimally exploited for ``smaller'' $\alpha$ using the Strichartz estimates.
Nevertheless, our convergence rates for $\alpha=4$ are still} higher than those obtained in \cite{RuffSchnaubelt}. 
Finally, we extend our results to the fully discrete setting (using the Fourier pseudo-spectral method) with optimal spatial convergence.

Even though our nonlinearity is of pure power-type, we make use of a filter function inside the nonlinearity when estimating the terms resulting from the local error (compared to the one-dimensional case \cite{Gauckler}). This is essentially because in 3D the multiplication by an $H^1 \cap L^\infty$ function is not a bounded operator on $H^{-1}$. As a filter, we use the operator $\Pi_{\tau^{-1}} {= \operatorname{diag}(\pi_{\tau^{-1}},\pi_{\tau^{-1}})}$ which in both components is the Fourier multiplier for the characteristic function of the square $[-\tau^{-1},\tau^{-1}]^3$, where $\tau>0$ denotes the time step size. This particular choice is made for several reasons. First, it enables us to use the summation by parts formula to exploit cancellations in the terms stemming from the local error. Second, a filter of this type is needed to obtain discrete-time Strichartz estimates (compare, e.g., \cite{IgnatSplitting,ORS,ORSBourgain,Ji,RuffSchnaubelt}), which are necessary for $\alpha=4$. Third, it fits well to the spatial discretization with the Fourier pseudo-spectral method {and increases the computational efficiency of the fully discrete scheme}.
As a main conceptual novelty, the proof of our error estimates combines the summation/integration by parts technique (as already used in, e.g., \cite{BuchholzGauckler,Averaged}) with the use of Strichartz estimates. 

In \cite{RuffSchnaubelt}, the terms stemming from the local error were estimated using discrete-time Strichartz estimates. {In combination with a non-optimal frequency filtering}, this led to a loss of convergence order in the error analysis of the formally second-order corrected Lie splitting. {Similar issues already arose in the Schrödinger case, cf.\ \cite{ORS,ORSBourgain}}. In the present paper, we {use a more suitable filtering that was similarly proposed as ``method ($\tilde B$)'' for the one-dimensional semilinear wave equation in \cite{Gauckler}. Moreover, we} show that at least in the case of the Strang splitting {applied to the semilinear wave equation \eqref{NLW2}}, one can avoid {the issues coming from the application of discrete-time Strichartz estimates to the local error terms} by using the continuous-time Strichartz estimates instead. 
In the cubic case $\alpha=3$, it even turns out that we do not need any discrete Strichartz estimates to prove our error result (the continuous ones are still used). This is related to the fact that, as mentioned above, the wellposedness of \eqref{NLW2} can in this case be shown without using Strichartz estimates. In the case $\alpha=4$ however, we need the discrete-time Strichartz estimates to ensure the stability of the numerical scheme. {Here it is crucial that the discrete Strichartz estimates are only needed for the filtered approximation $(\pi_{\tau^{-1}}u_n)_n$, and not for $(u_n)_n$, as postulated in \cite{IgnatSplitting,ORS,RuffSchnaubelt}, where $u_n \approx u(n\tau)$ are the numerical approximations. This makes no difference for the Schrödinger equation, but it does for the wave equation, again due to the formal gain of a derivative in the nonlinear part of the first-order formulation of the wave equation. A uniform filtering of the entire equation (including initial data) as in \cite{IgnatSplitting,ORS,RuffSchnaubelt} would only lead to sub-optimal results in the case of a second-order integrator applied to the semilinear wave equation.}

While extending our results to the fully discrete setting, we face the difficulty that one cannot take advantage of negative-order Sobolev spaces when estimating the trigonometric interpolation error. We solve this issue by using an $L^q$ estimate for the trigonometric interpolation error from \cite{Hristov,AlexandrovPopov,PopovHristov}. 

\subsection{Results in the semi-discrete setting}

In this paper we analyze a variant of the Strang splitting scheme that computes approximations $U_n \approx (u(n\tau),\partial_tu(n\tau))$ for a step size $\tau>0$ and $n \in \N_0$. With the notation $A(u,v) \coloneqq (v, \Delta u)$ for the wave operator and $G(u,v) \coloneqq (0,-\mu u^\alpha)$ for the nonlinearity, the semi-discrete form of the scheme reads
\begin{equation}\label{Strang}\begin{aligned}
		U_{n+1/2} &= e^{\tau A}[U_n + \tfrac\tau2 G(\Pi_{\tau^{-1}}U_n)],  \\
		U_{n+1} &= U_{n+1/2} + \tfrac\tau2 G(\Pi_{\tau^{-1}}U_{n+1/2}), \\
		U_0 &= (u^0,v^0).
	\end{aligned}
\end{equation}
This scheme fits into the class of trigonometric integrators in one-step formulation as described in Section XIII.2.2 of \cite{HaiLubWan}, with ``inner filter'' $\Pi_{\tau^{-1}}$. It corresponds to a variant of ``method ($\tilde B$)'' that was proposed and analyzed in the one-dimensional case in \cite{Gauckler}. 
{If we ignore the filter, the scheme \eqref{Strang} is also a Strang splitting in the classical sense applied to the first-order reformulation $\partial_tU = AU +G(U)$ of the semilinear wave equation \eqref{NLW2}. Indeed, one step of \eqref{Strang} consists of a half step of the exact flow of the filtered nonlinear ``sub-problem'' $\partial_tU = G(\Pi_{\tau^{-1}}U)$, followed by one full step of the exact flow of the linear ``sub-problem'' $\partial_tU = AU$, and finally another ``nonlinear'' half-step. It is therefore a natural (formally) second-order improvement of the filtered Lie splitting that was analyzed in \cite{RuffSchnaubelt}.} 
See \eqref{DefPiPer} for the precise definition of the filter.

\begin{Theorem}\label{ThmStrang}
	Let $U = (u,\partial_tu) \in C([0,T],H^1(\T^3) \times L^2(\T^3))$ solve the semilinear wave equation \eqref{NLW2}. Then there are a constant $C>0$ and a maximum step size $\tau_0>0$, such that the approximations $U_n$ obtained by the Strang splitting scheme \eqref{Strang} satisfy the following error bounds. 
	If $\alpha = 3$,
	\begin{align*}
	\|U(n\tau)-U_n\|_{H^1 \times L^2} &\le C\tau|\log\tau|, \\
	\|U(n\tau)-U_n\|_{L^2 \times H^{-1}} &\le C\tau^2|\log\tau|, 
	\end{align*}
	and for $\alpha=4$,
	\begin{align*}
		\|U(n\tau)-U_n\|_{H^1 \times L^2} &\le C\tau^\frac12, \\
		\|U(n\tau)-U_n\|_{L^2 \times H^{-1}} &\le C\tau^\frac32.
	\end{align*}
	Theses bounds are uniform in $\tau \in (0,\tau_0]$ and $n \in \N_0$ with $n\tau \le T$. The numbers $C$ and $\tau_0$ only depend on $T$ and $\|U\|_{L^\infty([0,T],H^1\times L^2)}$.
\end{Theorem}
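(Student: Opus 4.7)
I would combine a local error analysis with a stability estimate via a Lady Windermere fan argument, first proving the $L^2 \times H^{-1}$ bound with its sharp rate and then bootstrapping to $H^1 \times L^2$ losing an order in the process. Writing $\Phi^\tau$ for the one-step map $U_n \mapsto U_{n+1}$ defined in \eqref{Strang} and setting $L_k = U((k+1)\tau) - \Phi^\tau(U(k\tau))$ for the local error, the usual telescoping and a discrete Gronwall inequality reduce the theorem to two ingredients: (i) a local error bound whose telescoped sum is of size $\tau^2|\log\tau|$ (for $\alpha=3$) or $\tau^{3/2}$ (for $\alpha=4$) in $L^2 \times H^{-1}$, and (ii) a uniform-in-$\tau$ stability estimate for $\Phi^\tau$ in $L^2 \times H^{-1}$ on a suitable ball in $H^1 \times L^2$ around the exact solution.

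\textbf{Local error.} I would start from Duhamel's formula
\[
U((k+1)\tau) = e^{\tau A} U(k\tau) + \int_0^\tau e^{(\tau-s)A} G(U(k\tau+s)) \, ds,
\]
subtract the one-step form of \eqref{Strang}, and split the resulting local error into a quadrature discrepancy (between the integral and its Strang/trapezoidal-type quadrature $\tfrac{\tau}{2} e^{\tau A} G(U(k\tau)) + \tfrac{\tau}{2} G(U((k+1)\tau))$) and a filter discrepancy (from the two occurrences of $\Pi_{\tau^{-1}}$ and from replacing $U((k+1)\tau)$ by the predictor $U_{k+1/2}$). The crucial manipulation on the quadrature piece is integration by parts in $s$ against the trigonometric kernels comprising $e^{(\tau-s)A}$: writing $\omega = \sqrt{-\Delta}$ and exploiting $\partial_s \sin((\tau-s)\omega) = -\omega\cos((\tau-s)\omega)$, the IBP trades the $s$-derivative of $G(U(k\tau+s))$ for a bounded operator plus an extra factor of $\tau$, thereby gaining an order without demanding any additional spatial regularity on $U$ (the boundary contributions either vanish because $\sin(0)=0$ or combine with the quadrature weights to yield symbolically second-order kernels). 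What remains is an integral of the form $\int_0^\tau (\text{bounded operator})\, G(U(k\tau+s))\, ds$ which I would bound in $L^2 \times H^{-1}$ by controlling $\|u^\alpha\|_{L^1_tL^2_x}$ via the continuous-time Strichartz estimates applied to the exact solution on a well-chosen wave-admissible pair (for $\alpha=3$, $\|u\|_{L^\infty_t L^6_x}$ suffices, so only trivial space-time estimates are needed; for $\alpha=4$ a genuine Strichartz pair such as $L^4_tL^8_x$ is used). Summing over $k$ produces the claimed rates; the $|\log\tau|$ factor in the cubic case is the standard logarithmic cost of the $H^1$-filter error $(I-\Pi_{\tau^{-1}})U$.

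\textbf{Stability and bootstrap.} The stability of $\Phi^\tau$ in $L^2 \times H^{-1}$ amounts to bounding $G(\Pi_{\tau^{-1}} V) - G(\Pi_{\tau^{-1}} W)$ via the factorization $(V^{\alpha-1}+\ldots)(V-W)$. For $\alpha=3$, the chain $H^1 \hookrightarrow L^6$, Hölder's inequality, and the duality $L^{6/5} \hookrightarrow H^{-1}$ closes this under an a priori $H^1 \times L^2$ bound on the iterates, which I would close by the standard continuity/bootstrap argument for sufficiently small $\tau$. For $\alpha=4$ the pure Sobolev route fails and one must invoke discrete-time Strichartz estimates on the iterates (the filter $\Pi_{\tau^{-1}}$ is precisely chosen to make these available), providing a summable $\ell^q_\tau L^r_x$ control that bounds the cubic factor at the price of a $\tau^{1/2}$ loss; this loss is the source of the half-order reduction in the stated rates for $\alpha=4$. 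Finally, the $H^1 \times L^2$ bound is deduced from the $L^2 \times H^{-1}$ bound by a discrete energy argument exploiting the symmetric trigonometric form of \eqref{Strang}. The main obstacle, in my view, is the careful coordination of the IBP step with the Strichartz bounds so that after IBP the remaining integrals are controllable by $L^1_t L^2_x$-type norms of $u^\alpha$ and nothing like $L^\infty_x$-regularity is required; and, for $\alpha=4$, establishing the a priori discrete Strichartz bound on the iterates without a circular dependence on the error estimate being proved.
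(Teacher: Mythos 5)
Your high-level strategy matches the paper's in broad outline (Duhamel comparison giving a recursion, a Gronwall/iterative-bootstrap closure, Sobolev embedding for $\alpha=3$ versus discrete Strichartz for the $\alpha=4$ stability, the filter enabling those discrete estimates), but there is a genuine gap in how you propose to handle the quadrature local error. The trapezoidal error for $\int_0^{t_n}e^{(n\tau-s)A}G(\Pi_{\tau^{-1}}U(s))\,ds$ involves a second derivative of the integrand, and one of the four resulting terms is $d_4 = 2g'(\pi_{\tau^{-1}}u)\,\pi_{\tau^{-1}}\Delta u$ with $\Delta u$ only in $H^{-1}$. Your proposed integration by parts \emph{within a single step} against the kernel $\sin((\tau-s)\omega)$ trades the $s$-derivative of $G$ for a factor of $\tau$, but it does not gain any spatial regularity on $u$, so after the IBP you are still stuck needing a product estimate of the form $\|g'(\pi_{\tau^{-1}}u)\cdot\Delta u\|_{H^{-1}}$, which in 3D is false with only $H^1\times L^2$ data (one needs $W^{1,3}\cap L^\infty$ on the first factor). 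The paper's fix is a \emph{summation by parts over the time steps} $k$ combined with the identity $\tau A\Pi_{\tau^{-1}} = (e^{\tau A}-I)\Psi_\tau$ from Lemma~\ref{LemCancel}; the key is that the inner sum $\tau\sum_{k\le j}A^2\Pi_{\tau^{-1}}U(t_k+s)=\sum_{k\le j}(e^{\tau A}-I)\Psi_\tau AU(t_k+s)$ telescopes into a bounded quantity via Duhamel's formula, thereby trading the extra $\Delta$ for a telescoping difference of $U$-values instead of a loss of regularity. Without this cancellation mechanism, the local error estimate cannot be closed at finite-energy regularity in 3D.

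Two smaller but real misattributions. The $|\log\tau|$ factor for $\alpha=3$ does not come from $(I-\Pi_{\tau^{-1}})U$ (Lemma~\ref{LemPiNew} gives clean $O(\tau)$ there); it comes from the logarithmic correction in the endpoint $L^2_tL^\infty_x$ Strichartz estimate (Corollary~\ref{CorStrichWave}), which is needed to control quantities like $\|g'(\pi_{\tau^{-1}}u)\|_{L^1_TL^\infty}\sim\|\pi_{\tau^{-1}}u\|_{L^2_TL^\infty}^2$. Similarly, the $\tau^{1/2}$ loss for $\alpha=4$ does not originate in the discrete Strichartz estimates themselves (with $K=\tau^{-1}$ the factor $(1+K\tau)^{1/p}$ is $O(1)$); it comes from Bernstein-type bounds such as $\|\pi_{\tau^{-1}}\partial_tu\|_{L^3}\lesssim\tau^{-1/2}\|\partial_tu\|_{L^2}$ and from the power $(7-\alpha)/2$ in the filter-error factorization. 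Finally, your plan to prove $L^2\times H^{-1}$ first and then deduce $H^1\times L^2$ by a discrete energy argument is inverted relative to what is actually needed: the paper proves the $H^1\times L^2$ error bound \emph{in parallel} and uses it (inductively over steps, or over subintervals of length $T_1$) to keep the iterates $U_k$ in an $H^1\times L^2$ ball, which is precisely the a priori control required for the $L^2\times H^{-1}$ stability estimate. A frequency-based inverse inequality only converts $L^2\times H^{-1}$ into $H^1\times L^2$ for frequency-localized quantities such as $D_n$, not for the full error $E_n$.
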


\begin{Remark}
	In the case of a quadratic nonlinearity $\alpha=2$, an inspection of the proof of Theorem \ref{ThmStrang} shows that one obtains the same error bounds as in the cubic case $\alpha=3$. For the case of the critical power $\alpha=5$, it is possible to show the convergence results
	\begin{equation}\label{eq:critbound}
	\begin{aligned}
	\|U(n\tau)-U_n\|_{L^2 \times H^{-1}} &\le C\tau, \\
	\|U(n\tau)-U_n\|_{H^1 \times L^2} &\to 0, 
	\end{aligned}
	\end{equation}
	as $\tau \to 0$, uniformly in $n \in \N_0$ with $n\tau \le T$. We do not {elaborate on this here} since \eqref{eq:critbound} was already shown for the simpler Lie splitting on the full space $\R^3$ in \cite{RuffSchnaubelt}. One can show the corresponding result for the torus $\T^3$ by combining the arguments from there with the Strichartz estimates for $\T^3$ developed in the present paper. {See Theorem 4.4 and Section 7.6 of \cite{DissRuff} for details.}
\end{Remark}

\begin{Remark}
	We compare our 3D results to the known results in 1D. If $\alpha=3$, our convergence rates are almost the same as those obtained in the one-dimensional cases in \cite{Gauckler,Averaged}, {which in turn are generally believed to be optimal in terms of the regularity requirements}. For $\alpha=4$, Theorem \ref{ThmStrang} exhibits an order reduction. This reduction and the convergence behavior in \eqref{eq:critbound} can be observed in our numerical experiment in Subsection \ref{SecNum}. {We therefore believe that our results are optimal.}
\end{Remark}

\begin{Remark}
	The logarithm in the result of Theorem \ref{ThmStrang} for $\alpha=3$ comes from the use of the endpoint Strichartz estimate for the $L^2L^\infty$ norm that only holds with a logarithmic correction, cf. Corollary \ref{CorStrichWave}.
\end{Remark}

\subsection{The fully discrete scheme}

Denoting by $K \ge 1$ the spatial discretization parameter for the Fourier pseudo-spectral method, we consider the fully discrete scheme
\begin{equation}\label{StrangFully}\begin{aligned}
		U^K_{n+1/2} &= e^{\tau A}[U^K_n + \tfrac\tau2 \mathcal I_KG(\Pi_{\tau^{-1} }U^K_n)]  \\
		U^K_{n+1} &= U^K_{n+1/2} + \tfrac\tau2 \mathcal I_KG(\Pi_{\tau^{-1} }U^K_{n+1/2}) \\
		U_0^K &= \Pi_K(u^0,v^0).
	\end{aligned}
\end{equation}
Here, we use the notation $\mathcal I_K = \operatorname{diag}(I_K,I_K)$ for the trigonometric interpolation operator $I_K$, cf.\ Definition \ref{DefTrigInt}.
For this scheme, we obtain fully discrete error bounds of spatial order $K^{-1}$. This is optimal in view of the projection error \[\|(I-\Pi_K)U(n\tau)\|_{L^2\times H^{-1}} \lesssim K^{-1}\|U(n\tau)\|_{H^1\times L^2},\]
cf.\ Lemma \ref{LemPiNew}.
\begin{Theorem}\label{ThmStrangFully}
	Let $U = (u,\partial_tu) \in C([0,T],H^1(\T^3) \times L^2(\T^3))$ solve the semilinear wave equation \eqref{NLW2}. Then there are positive constants $C$, $\tau_0$, and $K_0$, such that the approximations $U_n^K$ obtained by the fully discrete Strang algorithm \eqref{StrangFully} satisfy the error bounds
	\begin{alignat*}{2}
		\|U(n\tau)-U^K_n\|_{L^2 \times H^{-1}} &\le C(\tau^2|\log\tau| +K^{-1}), &&\quad\text{if}\ \alpha=3, \\
		\|U(n\tau)-U^K_n\|_{L^2 \times H^{-1}} &\le C(\tau^\frac32 + K^{-1}), &&\quad\text{if}\ \alpha=4,
	\end{alignat*}
	uniformly in $\tau \in (0,\tau_0]$, $K \ge K_0$ and $n \in \N_0$ with $n\tau \le T$. The numbers $C$, $\tau_0$, and $K_0$ only depend on $T$ and $\|U\|_{L^\infty([0,T],H^1\times L^2)}$.
\end{Theorem}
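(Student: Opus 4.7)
The plan is to compare $U_n^K$ with the semi-discrete iterate $U_n$ from Theorem~\ref{ThmStrang} via the three-piece splitting
\[
U(n\tau) - U_n^K \;=\; \bigl(U(n\tau) - U_n\bigr) \;+\; (I - \Pi_K) U_n \;+\; \bigl(\Pi_K U_n - U_n^K\bigr).
\]
The first summand is controlled in $L^2 \times H^{-1}$ by Theorem~\ref{ThmStrang} and supplies the stated temporal rate. For the second I use the quoted projection bound $\|(I - \Pi_K) W\|_{L^2 \times H^{-1}} \lesssim K^{-1} \|W\|_{H^1 \times L^2}$ together with the uniform estimate $\|U_n\|_{H^1 \times L^2} \lesssim 1$ that comes out of the proof of Theorem~\ref{ThmStrang}; this contributes a term of size $K^{-1}$. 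The actual work concentrates on the third summand, which I denote by $e_n := \Pi_K U_n - U_n^K$ and which starts at $e_0 = 0$.

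To derive a recursion for $e_n$ I apply $\Pi_K$ to the semi-discrete scheme~\eqref{Strang}. Since $\Pi_K$ commutes with $e^{\tau A}$ and satisfies $\Pi_K \Pi_{\tau^{-1}} = \Pi_{\tau^{-1}}$ once $K\tau \ge 1$, this just replaces $G$ by $\Pi_K G$ in \eqref{Strang}. Subtracting \eqref{StrangFully} and using $U_n - U_n^K = e_n + (I - \Pi_K) U_n$, the nonlinear contribution at each step decomposes as
\[
(\Pi_K - \mathcal I_K)\,G(\Pi_{\tau^{-1}} U_n^K) \;+\; \Pi_K\bigl[G(\Pi_{\tau^{-1}} U_n) - G(\Pi_{\tau^{-1}} U_n^K)\bigr],
\]
plus an analogous half-step term. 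The Lipschitz-type second piece, treated by a mean-value argument with the same Strichartz and Sobolev bounds that appear in the semi-discrete stability analysis, produces contributions of size $\tau\,(\|e_n\|_{L^2 \times H^{-1}} + K^{-1})$ once the bootstrap $\|U_n^K\|_{H^1 \times L^2} \lesssim 1$ has been secured.

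The main obstacle is the interpolation defect $(\Pi_K - \mathcal I_K)\,G(\Pi_{\tau^{-1}} U_n^K)$ measured in the negative-order norm $H^{-1}$: the operator $\mathcal I_K$ is not bounded on $H^1$, so one cannot directly dualize an $L^2$-type interpolation estimate against an $H^1$ test function. As flagged in the introduction, I would instead test against $\phi \in H^1(\T^3) \hookrightarrow L^q(\T^3)$ for some $q \le 6$ and estimate the defect in $L^{q'}$ with $q' \ge 6/5$, invoking the $L^{q'}$ trigonometric interpolation estimate of \cite{Hristov,AlexandrovPopov,PopovHristov}. This reduces matters to a $W^{1,q'}$-type (modulus-of-smoothness) bound on the composition $G(\Pi_{\tau^{-1}} U_n^K)$, which follows from the chain rule plus H\"older using the uniform $H^1$-bound on $\Pi_{\tau^{-1}} U_n^K$; in the case $\alpha = 4$ the discrete-time Strichartz estimates established earlier in the paper also enter at this point.

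Finally, the recursion for $e_n$ is closed by a discrete Gronwall argument combined with a bootstrap on $\|U_n^K\|_{H^1 \times L^2}$: one assumes uniform boundedness up to a stopping index, derives the error bound, and uses its smallness to extend the stopping index until $n\tau \le T$. All remaining ingredients, in particular the integration/summation-by-parts cancellations and the stability analysis in $L^2 \times H^{-1}$, carry over from the semi-discrete proof of Theorem~\ref{ThmStrang} with only cosmetic changes; the truly new element is the $L^q$-based handling of the trigonometric interpolation error described above.
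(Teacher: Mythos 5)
Your proposal is correct, and it takes a route that is genuinely different from the paper's. The paper works directly with $E_n^K := \Pi_K U(t_n) - U_n^K$ and re-runs the full error decomposition of Proposition~\ref{PropFehlerRekStrangNew} with a modified filter level $N := \min\{K,\tau^{-1}\}$ (so that the filter-induced error terms $\tilde B$, $\tilde D_n$, $\tilde Q_n$ carry $\Pi_N$ rather than $\Pi_{\tau^{-1}}$), plus one extra term $\tilde P_n$ that isolates the trigonometric interpolation defect applied to the \emph{exact} solution $G(\Pi_N U(t_n))$. Your decomposition instead uses Theorem~\ref{ThmStrang} as a black box for the temporal error $U(t_n) - U_n$, and then contracts the purely spatial gap $e_n = \Pi_K U_n - U_n^K$; this modularity is cleaner, but it places the interpolation defect on the fully discrete iterate $G(\Pi_{\tau^{-1}}U_n^K)$, which for $\alpha=4$ requires a discrete Strichartz bound on $u_n^K$ (a fully discrete analogue of Lemma~\ref{LemDiskrWellp}) — this is needed in both proofs anyway because the stability term $\tilde Q_n$ involves $U_n^K$, but it is worth noting your route uses it at one more place. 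You correctly identify the single genuinely new technical ingredient, the $L^{q'}$-based interpolation estimate of Lemma~\ref{LemTrigIntStab}, as the mechanism for reaching the optimal $K^{-1}$ rate in $L^2\times H^{-1}$; dualizing against $H^1 \hookrightarrow L^6$ and estimating the defect in $L^{6/5}$ is exactly what the paper does for $\tilde P_n$. One small inaccuracy: the restriction $K\tau \ge 1$ that you invoke for $\Pi_K\Pi_{\tau^{-1}} = \Pi_{\tau^{-1}}$ is not needed and should not appear, since the theorem is claimed uniformly in $\tau$ and $K$ with no coupling; the decomposition of the nonlinear contribution you write down holds regardless of the sign of $K\tau-1$, and the reason the paper's replacement $\tau^{-1}\to N$ is automatic in your setup is that $U_n^K$ is already frequency-localized at level $K$, so $g(\pi_{\tau^{-1}}u_n^K)$ is supported in Fourier modes $|k|_\infty \le \alpha\min\{K,\tau^{-1}\} \le \alpha K$, which is precisely what Lemma~\ref{LemTrigIntStab} requires.
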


\subsection{{Discussion and outlook}}
{We comment on possible generalizations of our results.}

\begin{Remark}\label{RemGeneralAlpha}
	We will sometimes exploit that our nonlinearity is a polynomial in $u$, but strictly speaking, this is not necessary. With somewhat greater technical effort we could also treat the equation
	\begin{equation}\label{NLWGeneral} 
		\partial_t^2 u -\Delta u  =  g(u)
	\end{equation}
	with a general nonlinearity $g \in C^2(\R,\R)$ satisfying the bounds
	\begin{align*}
		|g(z)| &\lesssim 1 + |z|^\alpha, \\
		|g'(z)| &\lesssim 1 + |z|^{\alpha-1}, \\
		|g''(z)| &\lesssim 1 + |z|^{\alpha-2},
	\end{align*}
	for $z \in \R$. This covers in particular the semilinear Klein--Gordon equation since the lower-order mass term can be moved into the nonlinearity. One could also allow fractional $\alpha$. For $\alpha \in (3,5)$, the convergence of the {semi-discrete} Strang splitting \eqref{Strang} {in Theorem \ref{ThmStrang}} would then be of order $(7-\alpha)/2$ in the $L^2 \times H^{-1}$ norm and of order $(5-\alpha)/2$ in the $H^1 \times L^2$ norm, respectively. {Similar considerations apply to the error bounds for the fully discrete scheme \eqref{StrangFully} from Theorem \ref{ThmStrangFully}.} In view of the error bound for the trigonometric interpolation from Lemma \ref{LemTrigIntError}, we then would additionally need the third derivative of $g$ with bound
	\[|g'''(z)| \lesssim 1 + |z|^{\alpha-3}. \]
\end{Remark}

\begin{Remark}
	In view of the error bounds of Theorem \ref{ThmStrangFully}, it might be advantageous to choose the spatial resolution finer than the temporal one. In the case $K > \alpha/\tau$, it turns out that the highest frequencies $(\Pi_K-\Pi_{\alpha/\tau})U_n^K$ in \eqref{StrangFully} are only influenced by the linear part $e^{tA}$ and not by the nonlinear function $G$. Therefore, in that case, if one is only interested in the numerical approximation $U_N^K$ for some $N \gg 1$, the high-frequency part can be computed directly from the initial data via $(\Pi_K-\Pi_{\alpha/\tau})U_N^K = e^{N\tau A} (\Pi_K-\Pi_{\alpha/\tau})(u^0,v^0)$, without time-stepping. This idea was also exploited in the recent paper \cite{DiscSol}.
\end{Remark}

\begin{Remark}
	One might wonder if under our assumption $(u,\partial_tu) \in H^1 \times L^2$, a low-regularity integrator such as the corrected Lie splitting proposed in \cite{KleinGordon} can give higher convergence rates than classical schemes such as the Strang splitting. The authors in \cite{KleinGordon} show that this is possible in the one-dimensional case. However, in our 3D case we did not succeed to find such a result so far. 
\end{Remark}

\begin{Remark}
	The differential equation \eqref{NLW2} is posed on the torus $\T^3$ which corresponds to a cube with periodic boundary conditions. We could also treat the problem on a cube with Dirichlet or Neumann boundary conditions by restricting the full Fourier basis to a sine or cosine basis. We omit the details for the sake of brevity.
\end{Remark}

\subsection{Organization of the paper}

In Section \ref{SecStrich}, we give a review on Strichartz estimates for the linear wave equation. We give a proof for the discrete-time inequalities on the torus and state all the Strichartz estimates needed in this paper. Moreover, we collect some important properties of the filter operator. In Section \ref{SecNonlinear} we give a brief review on the local wellposedness theory of \eqref{NLW2} and give important estimates for its solution. In Section \ref{SecErrorAnalysis}, the error analysis of the semi-discrete Strang splitting \eqref{Strang} is carried out, in particular, the proof of Theorem \ref{ThmStrang}. The last Section \ref{SecFullyDiscr} contains the proof of the fully discrete error bounds from Theorem \ref{ThmStrangFully}. Finally, we discuss a numerical experiment to illustrate our temporal error bounds.

\subsection{Notations}
	
	We write $A \lesssim B$ (or $A \lesssim_\beta B$) if $A \le cB$ for a generic constant $c > 0$ (depending on quantities $\beta$). The torus $\T^3$ is understood as the cube $[-\pi,\pi]^3$ where one identifies opposite sides. We denote the space of distributions on the torus by $\mathcal D'(\T^3)$. The $k$-th Fourier coefficient of a distribution $v \in \mathcal D'(\T^3)$ is defined by 
	\[\hat v_k = (2\pi)^{-\frac32} \langle v, e^{-\iu k\cdot x} \rangle_{\mathcal D'(\T^3) \times \mathcal D(\T^3)}, \quad k \in \Z^3. \]
	For a real number $s \in \R$, the Sobolev spaces on $\T^3$ are given by
	\[ H^s(\T^3) = \{ v \in \mathcal D'(\T^3) \colon \|v\|_{H^s(\T^3)} < \infty \}  \]
	with norm
	\begin{equation}\label{DefSobNormTorus}
	\|v\|^2_{H^s(\T^3)} = \sum_{k \in \Z^3}(1+|k|^2)^s|\hat v_k|^2.
	\end{equation}
	
	In Section \ref{SecStrich}, we will also make use of the Fourier transform and Sobolev spaces on $\R^3$. We write $\mathcal F$ for the Fourier transform on $\R^3$, using the convention with the prefactor $(2\pi)^{-3/2}$. We also use the notation $\hat u \coloneqq \mathcal F u$. In the context of Fourier multipliers, we often simply write $\xi$ instead of the map $\xi \mapsto \xi$. For $s \in \R$, we use the inhomogeneous and homogeneous Sobolev norms 
	\[
	\|w\|_{H^{s}(\R^3)} = \|(1+|\xi|^2)^{\frac{s}{2}} \hat w\|_{L^2(\R^3)}, \quad
	\|w\|_{\dot H^{s}(\R^3)} = \||\xi|^{s} \hat w\|_{L^2(\R^3)},
	\]
	if $\hat w \in L^1_{\mathrm{loc}}(\R^3)$. The homogeneous Sobolev space $\dot H^s(\R^3)$ is defined as
	\[\dot H^s(\R^3) = \{w \in \mathcal S'(\R^3) : \hat w \in L^1_{\mathrm{loc}}(\R^3)\ \text{and}\ \|w\|_{\dot H^s(\R^3)} < \infty\},\]
	where $\mathcal S(\R^3)$ denotes the space of tempered distributions on $\R^3$. By Proposition 1.34 of \cite{Bahouri}, the homogeneous space $\dot H^s(\R^3)$ is complete if and only if $s<3/2$.
	
	Let $h \colon \R \to \R$ be a bounded function. To denote the Fourier multiplication operator for the function $\xi \mapsto h(|\xi|)$ (on $\R^3$) and $k \mapsto h(|k|)$ (on $\T^3$), we will use the notation $h(|\nabla|)$ in both cases. It is clear from the definition of the Sobolev norms that the operator $h(|\nabla|)$ is uniformly bounded on all spaces $H^s(\T^3)$, $H^s(\R^3)$ and $\dot H^s(\R^3)$.
	
	Let $p \in [1,\infty]$, $J$ be a time interval, and $X$ be a Banach space. We use the Bochner space $L^p(J,X)$ with norm
	\[\|F\|_{L^p(J,X)} = \Big(\int_J\|F(t)\|_X^p \dd t\Big)^\frac{1}{p},\]
	and the usual modification for $p=\infty$. If a ``free'' variable $t$ occurs in such a Bochner norm, the time integration is carried out with respect to $t$. For a step size $\tau>0$, we also introduce the discrete-time norm 
	\[\|F\|_{\ell^p_{\tau}(J,X)} \coloneqq \Big(\tau \sum_{\substack{n \in \Z\\ n\tau \in J}} \|F_n\|_{X}^p \Big)^{\frac{1}{p}}.  \]
	To simplify notation we often write $\|F_n\|_{\ell^p_{\tau}(J,X)}$ instead of $\|(F_n)_{n \in \Z}\|_{\ell^p_{\tau}(J,X)}$, where again a ``free'' variable $n$ is assumed as the summation variable.	
	In the case $J = [0,T]$, we abbreviate $L^p_TX = L^p([0,T],X) $ and $\ell^p_{\tau,T}X = \ell^p_\tau([0,T],X)$.
	
\section{Linear estimates}\label{SecStrich}

\subsection{Continuous and discrete Strichartz estimates on the full space}

A triple $(p,q,\gamma)$ is called \emph{admissible} (in dimension three), if $p \in (2,\infty]$, $q \in [2,\infty)$, and
\begin{equation}\label{DefAdmissible}
	\frac{1}{p}+\frac{1}{q} \le \frac{1}{2}, \qquad \frac{1}{p}+\frac{3}{q} = \frac{3}{2}-\gamma. 
\end{equation}
One then has $\gamma \in [0,\frac32)$, and the equality in \eqref{DefAdmissible} is called \emph{scaling condition}. The following theorem is well known, cf.\ Chapter IV.1 of \cite{Sogge}.

\begin{Proposition}\label{ThmStrich}
	Let $(p,q,\gamma)$ be admissible. Then we have the estimate
	\begin{align*}
		\|e^{\iu t |\nabla|}f\|_{L^p(\R, L^q(\R^3))} &\lesssim_{p,q} \|f\|_{\dot H^\gamma(\R^3)}, 
	\end{align*}
	for all $f \in \dot H^\gamma(\R^3)$.
\end{Proposition}

Observe that the triple $(p,q,\gamma) = (\infty,2,0)$ corresponds to the usual energy estimate. If we take $p<\infty$, then the scaling condition implies that we save $1/p$ derivatives compared with a fixed-time Sobolev embedding.

\begin{Remark}
	The transformation $f \mapsto \bar f$ shows that in the estimates of this subsection, one can always replace $e^{\iu t |\nabla|}$ with $e^{-\iu t |\nabla|}$. 
\end{Remark}

The estimate from {Proposition} \ref{ThmStrich} can be applied to the wave equation in the following way. Let $\gamma \in [0,\frac32)$. For given initial data $f \in \dot H^\gamma(\R^3)$, $g \in \dot H^{\gamma-1}(\R^3)$, we define the function
\[w(t) \coloneqq \cos(t|\nabla|) f + |\nabla|^{-1}\sin(t|\nabla|)g, \quad t \in \R.\]
Using the Fourier transform, one checks that $w \in C(\R,\dot H^\gamma(\R^3))$ is the distributional solution to the linear wave equation 
\[\partial_{tt}w-\Delta w = 0, \quad w(0) = f, \quad \partial_tw(0) = g, \]
and {Proposition} \ref{ThmStrich} gives the estimate
\[\|w\|_{L^p(\R,L^q(\R^3))} \lesssim_{p,q} \|f\|_{\dot H^\gamma(\R^3)} + \|g\|_{\dot H^{\gamma-1}(\R^3)}, \]
whenever $(p,q,\gamma)$ is admissible.

To obtain discrete-time Strichartz estimates, it is necessary to include a suitable filter operator. This was first observed in case of the Schrödinger equation, cf.\ \cite{IgnatSplitting}. On the full space, we will use the circular Fourier cut-off
\begin{equation}\label{DefPi}
	\tilde \pi_K \coloneqq \mathcal F^{-1} \mathbbm{1}_{B(0,K)}\mathcal F 
\end{equation}
at level $K \ge 1$.
In \cite{RuffSchnaubelt}, the following discrete Strichartz estimate for the half-wave group was proven.

\begin{Proposition}\label{ThmDisStrich}
	Let $(p,q,\gamma)$ be admissible. Then we have the estimate
	\[
		\|\tilde \pi_Ke^{\iu n\tau|\nabla|} f \|_{\ell^p_\tau (\R,L^q(\R^3))} \lesssim_{p,q} (1+K\tau)^\frac{1}{p} \|f\|_{\dot H^\gamma(\R^3)},
	\]
	for all $\tau \in (0,1]$, $K \ge 1$, and $f \in \dot H^\gamma(\R^3)$.
\end{Proposition}

This estimate is optimal in the following sense. If we assume that $K\tau\ge 1$ and only consider the term with $n=0$ in the left-hand side of {Proposition} \ref{ThmDisStrich}, we obtain the frequency-localized Sobolev embedding
\[ \|\tilde \pi_K f \|_{L^q(\R^3)} \lesssim_{p,q} K^\frac{1}{p} \|f\|_{\dot H^\gamma(\R^3)}, \]
which in general is sharp by scaling.

The estimates of {Propositions} \ref{ThmStrich} and \ref{ThmDisStrich} fail at the so-called ``double endpoint'' $(p,q,\gamma) = (2,\infty,1)$. However, estimates with logarithmic corrections in time and frequency are available. In \cite{RuffSchnaubelt}, the following discrete-time bound was shown, inspired by a corresponding inequality for the continuous case in \cite{Joly}. 

\begin{Proposition}\label{ThmEndp}
	The estimate
	\[
		\|\tilde \pi_Ke^{\iu n\tau|\nabla|} f \|_{\ell^2_{\tau,T} L^\infty(\R^3)} \lesssim (1+K\tau + \log(1+KT))^\frac12 \|f\|_{\dot H^1(\R^3)},
	\]
	holds for all $\tau \in (0,1]$, $K \ge 1$, $T \ge 0$, and $f \in \dot H^1(\R^3)$.
\end{Proposition}

\subsection{Strichartz estimates for the half-wave group on the torus}
Thanks to the finite speed of propagation for the wave equation, one expects that locally in time, one has the same Strichartz estimates on the torus $\T^3$ as on the full space $\R^3$. For continuous time, this has been carried out in, e.g., \cite{KapCP1} by using suitable extension and cut-off operators. We will follow the same strategy to prove corresponding versions of the discrete-time {Propositions} \ref{ThmDisStrich} and \ref{ThmEndp} for the torus.

Let $E \colon \mathcal D'(\T^3) \to \mathcal S'(\R^3)$ denote the periodic extension operator (where we interpret $\T^3 = [-\pi,\pi]^3$ as above). Note that for $f \in C^\infty(\T^3)$ we have $Ef \in C^\infty(\R^3)$ with periodic partial derivatives. 
The next lemma shows that an extended Sobolev function multiplied with a smooth cut-off function belongs to the corresponding Sobolev space on $\R^3$. 

\begin{Lemma}\label{LemTorusAbschn}
	Let $\eta \in C_c^\infty(\R^3)$ and $s \in \R$. Then the estimate
	\[\|\eta Ef\|_{H^s(\R^3)} \lesssim_{\eta,s} \|f\|_{H^s(\T^3)} \]
	is true for any $f \in H^s(\T^3)$.
\end{Lemma}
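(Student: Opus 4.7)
The plan is a Fourier-analytic computation that handles all real $s$ uniformly. First expand the periodic extension as its Fourier series
\[Ef = (2\pi)^{-3/2}\sum_{k \in \Z^3} \hat f_k\, e^{\iu k\cdot x}\quad \text{in } \mathcal S'(\R^3),\]
so that $\eta Ef$ is a compactly supported distribution whose Fourier transform satisfies
\[\widehat{\eta Ef}(\xi) = (2\pi)^{-3/2}\sum_{k \in \Z^3} \hat f_k\, \hat\eta(\xi-k), \qquad \xi \in \R^3.\]
Since $\eta \in C_c^\infty(\R^3)$, its Fourier transform $\hat\eta$ is Schwartz, so these manipulations are justified (and the Fourier coefficients $(\hat f_k)$ have polynomial growth, so $Ef$ is indeed tempered).

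The key step is to invoke Peetre's inequality
\[(1+|\xi|^2)^{s/2} \le 2^{|s|/2}(1+|k|^2)^{s/2}(1+|\xi-k|^2)^{|s|/2},\]
which moves the Sobolev weight from the frequency variable $\xi$ onto the lattice variable $k$. Setting $h(\xi) \coloneqq (1+|\xi|^2)^{|s|/2}|\hat\eta(\xi)|$, which is rapidly decreasing, I arrive at the pointwise bound
\[(1+|\xi|^2)^{s/2}\bigl|\widehat{\eta Ef}(\xi)\bigr| \lesssim_s \sum_{k\in\Z^3} a_k\, h(\xi-k), \qquad a_k \coloneqq (1+|k|^2)^{s/2}|\hat f_k|.\]

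To pass to the $L^2(\R^3)$ norm I would apply Cauchy--Schwarz pointwise,
\[\Bigl(\sum_k a_k h(\xi-k)\Bigr)^2 \le \Bigl(\sum_k a_k^2\, h(\xi-k)\Bigr)\Bigl(\sum_k h(\xi-k)\Bigr),\]
noting that $M(\xi) \coloneqq \sum_k h(\xi-k)$ is $\Z^3$-periodic and bounded thanks to the rapid decay of $h$. Integrating and swapping sum and integral in the first factor yields
\[\|\eta Ef\|_{H^s(\R^3)}^2 \lesssim_s \|M\|_{L^\infty}\|h\|_{L^1(\R^3)}\sum_k a_k^2 = C_{\eta,s}\|f\|_{H^s(\T^3)}^2,\]
which is the claim. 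The only real obstacle is keeping the distributional bookkeeping straight when $s<0$, but this is settled once and for all by the Schwartz decay of $\hat\eta$ together with the polynomial growth of the Fourier coefficients of $f$; after that, the whole argument reduces to Peetre's inequality plus one application of Cauchy--Schwarz.
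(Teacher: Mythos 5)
Your argument is correct, and it takes a genuinely different route from the paper. The paper reduces to the case $s=0$ (where the bound is trivial), extends it to integer $s$ by induction, interpolates to get all $s\ge 0$, and then handles $s<0$ by a duality argument built on a smooth partition of unity. You instead carry out a single direct computation on the Fourier side: writing $\widehat{\eta E f}(\xi)=(2\pi)^{-3/2}\sum_k \hat f_k\,\hat\eta(\xi-k)$, transferring the Sobolev weight onto the lattice variable with Peetre's inequality, and closing with a Schur-type Cauchy--Schwarz estimate against the rapidly decaying kernel $h$. The bookkeeping you flag is indeed the only delicate point, and it is justified as you say: $\hat f_k$ grows at most polynomially while $\hat\eta$ is Schwartz, so the lattice sum converges absolutely and the interchange of sum and integral is Tonelli for nonnegative terms. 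What your route buys is uniformity across all $s\in\R$ in one pass, with no appeal to interpolation theory or to the partition-of-unity/duality device; it also dovetails with the computation $(2\pi)^{3/2}\mathcal F(\eta E\pi_K f)=\sum_{|k|_\infty\le K}\hat f_k\,\hat\eta(\cdot-k)$ that the paper uses anyway in the proof of Theorem \ref{ThmDisStrichTorus}, so it would make the exposition slightly more cohesive. What the paper's route buys is that each step ($s=0$, induction, interpolation, duality) is an off-the-shelf tool requiring no pointwise Fourier bounds; this is a matter of taste rather than necessity here. Both proofs are sound.
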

\begin{proof}
	By approximation, it suffices to consider smooth $f$. The statement is clear if $s=0$, and inductively extends to all $s \in \N$. By interpolation, we then infer the assertion for all $s\ge0$. The case $s<0$ is handled via duality. Let $(\phi_j)_{j\in\N}$ be a smooth partition of unity such that $\sum_{j\in \N}\phi_j = \mathbbm 1$ and $\phi_j \in C_c^{\infty}(\R^3)$ with $\supp \phi_j \subseteq \{y_j\} + (-\pi,\pi)^3$ for all $j \in \N$ and some $y_j \in \R^3$. We compute
	\begin{align*}
		\|\eta Ef\|_{H^s(\R^3)} &= \sup_{\|g\|_{H^{-s}(\R^3)}=1} \Big| \int_{\R^3} \eta Ef \cdot g \dd x \Big| \\
		&=  \sup_{\|g\|_{H^{-s}(\R^3)}=1} \Big| \sum_{j \in \N} \int_{\{y_j\} + (-\pi,\pi)^3} Ef \cdot\eta  g\phi_j \dd x \Big| \\
		&\le  \sup_{\|g\|_{H^{-s}(\R^3)}=1} \|f\|_{H^s(\T^3)}\sum_{j \in \N}   \|\eta  g\phi_j\|_{H^{-s}(\R^3)} \lesssim_{\eta,s} \|f\|_{H^s(\T^3)},
	\end{align*} 
	where the supremum is taken over smooth $g$.
	Here we used that we can consider $(\eta g\phi_j)(y_j+\cdot)$ as a test function on $\T^3$, and that the sum is actually finite thanks to the compact support of $\eta$.
\end{proof}

For the discrete-time Strichartz estimates, we need to introduce a Fourier cut-off on the torus (similar as \eqref{DefPi} on $\R^3$). For $f \in \mathcal D'(\T^3)$ and $K \ge1$ we define the square frequency cut-off operator $\pi_K$ via the truncated Fourier series
\begin{equation}\label{DefPiPer}
	(\pi_K f)(x) \coloneqq (2\pi)^{-\frac32} \sum_{|k|_\infty \le K} \hat f_k e^{\iu k \cdot x}, \quad x \in \T^3. 
\end{equation}
Here, the sum is taken over all $k \in \Z^3$ with $|k|_\infty = \max_{j=1,2,3}|k_j| \le K$, and $\hat f_k$
denotes the $k$-th Fourier coefficient of $f$. From the definition of the Sobolev norm \eqref{DefSobNormTorus}, it follows that $\pi_K$ is bounded on all spaces $H^s(\T^3)$, uniformly in $s \in \R$ and $K \ge 1$.

\begin{Theorem}\label{ThmDisStrichTorus}
	Let $(p,q,\gamma)$ be admissible and $T\ge 0$. We then have the estimates
	\begin{align*}
		\|e^{\iu n \tau|\nabla|}\pi_K f \|_{\ell^p_{\tau,T}L^q(\T^3)} &\lesssim_{p,q,T} (1+K\tau)^{\frac1p} \|f\|_{H^\gamma(\T^3)}, \\
		\|e^{\iu n \tau|\nabla|}\pi_K h \|_{\ell^2_{\tau,T}L^\infty(\T^3)} &\lesssim_{T} (1+K\tau+\log K)^{\frac12} \|h\|_{H^1(\T^3)},  
	\end{align*}
	for all $f \in H^\gamma(\T^3)$, $h \in H^1(\T^3)$, $\tau \in (0,1]$, and $K \ge 1$.
\end{Theorem}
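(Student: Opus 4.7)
I would follow the strategy outlined by the author and transfer Theorems~\ref{ThmDisStrich} and~\ref{ThmEndp} from $\R^3$ to $\T^3$ by means of finite propagation speed, in the spirit of \cite{KapCP1}. Since the half-wave group itself is not finitely propagated, I reinterpret $u(t):=e^{\iu t|\nabla|}\pi_Kf$ as the unique solution of the second-order wave equation $\partial_t^2 u-\Delta u=0$ on $\T^3$ with the artificial initial pair $(\pi_Kf,\,\iu|\nabla|_\T\pi_Kf)$, so that a properly extended pair of data can be used to build a comparison wave solution on $\R^3$. The distributional identity $|\nabla|_{\R^3}E=E\,|\nabla|_\T$, immediate from the Fourier series of a periodic distribution, is what makes the two viewpoints compatible.

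\textbf{Reduction to short time.} Fix $t_0\in(0,\pi]$. Using that $e^{\iu k\tau|\nabla|}$ is an isometry on $H^\gamma(\T^3)$ and commutes with $\pi_K$, the semigroup identity
\[
e^{\iu n\tau|\nabla|}\pi_Kf = e^{\iu(n-k_j)\tau|\nabla|}\pi_K\bigl(e^{\iu k_j\tau|\nabla|}f\bigr)
\]
reduces the problem to the local estimate for $n\tau\in[0,t_0]$ with a shifted initial value of the same $H^\gamma$-norm. Raising this local bound to the $p$-th power and summing in $\ell^p_\tau$ over the $\lesssim T/t_0$ subintervals of $[0,T]$ yields the full estimate, with a $T$-dependent constant.

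\textbf{Extension, comparison, and $\R^3$-Strichartz.} Pick $\eta\in C_c^\infty(\R^3)$ with $\eta\equiv 1$ on $[-2\pi,2\pi]^3$ and $\supp\eta\subset[-3\pi,3\pi]^3$, and let $\tilde u$ solve the $\R^3$-wave equation with data $(a,b):=(\eta E\pi_Kf,\,\iu\,\eta\,E|\nabla|_\T\pi_Kf)$. Then $\tilde u$ and the periodic extension $Eu$ both solve the wave equation on $\R^3$ with initial data agreeing on $\{\eta=1\}$, so finite propagation speed gives $\tilde u(t,x)=u(t,x)$ for $x\in[-\pi,\pi]^3$ and $|t|\le t_0\le\pi$. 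It therefore suffices to bound $\|\tilde u(n\tau)\|_{\ell^p_{\tau,t_0}L^q(\R^3)}$. To apply Theorem~\ref{ThmDisStrich}, whose data must be frequency-localized, I split $\tilde u=\tilde\pi_{CK}\tilde u+(1-\tilde\pi_{CK})\tilde u$ for a large constant $C$. Writing $\tilde u=\tfrac12e^{\iu t|\nabla|}(a-\iu|\nabla|^{-1}b)+\tfrac12e^{-\iu t|\nabla|}(a+\iu|\nabla|^{-1}b)$, Theorem~\ref{ThmDisStrich} combined with Lemma~\ref{LemTorusAbschn} and the boundedness of $|\nabla|_\T\colon H^\gamma(\T^3)\to H^{\gamma-1}(\T^3)$ bounds the low-frequency piece by $(1+K\tau)^{1/p}\|f\|_{H^\gamma(\T^3)}$. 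For the high-frequency tail, the Schwartz decay of $\hat\eta$ together with the Fourier support $\supp\widehat{E\pi_Kf}\subset\{|k|_\infty\le K\}$ gives $\|(1-\tilde\pi_{CK})(\eta E\pi_Kf)\|_{H^s(\R^3)}\lesssim_{s,N}K^{-N}\|f\|_{L^2(\T^3)}$ for every $s,N\ge0$ (and analogously for the second datum); a crude Sobolev embedding and the unitarity of $e^{\pm\iu t|\nabla|}$ then render this contribution negligible. The endpoint inequality is obtained identically by replacing Theorem~\ref{ThmDisStrich} by Theorem~\ref{ThmEndp} and noting $\log(1+Kt_0)\lesssim_{t_0}1+\log K$.

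\textbf{Main obstacle.} The decisive technical step is setting up a single $\R^3$-wave evolution whose restriction to $\T^3\times[0,t_0]$ agrees with $e^{\iu t|\nabla|}\pi_Kf$; the half-wave group has no finite propagation speed of its own, so one must pass through the second-order problem with the nonlocal initial velocity $\iu|\nabla|_\T\pi_Kf$, whose periodic extension genuinely solves the same $\R^3$-equation as $\tilde u$ only because of the commutation $|\nabla|_{\R^3}E=E|\nabla|_\T$. A secondary mild annoyance is the mismatch between the cube cutoff $\pi_K$ and the ball cutoff $\tilde\pi_K$, which is handled entirely via the Schwartz decay of $\hat\eta$.
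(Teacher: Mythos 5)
Your proposal follows the same overall strategy as the paper: reinterpret the filtered half-wave flow as the solution of the second-order wave equation on $\T^3$, extend via $E$, truncate with a smooth cutoff $\eta$, invoke finite speed of propagation, and split frequencies before applying Theorem~\ref{ThmDisStrich} and Lemma~\ref{LemTorusAbschn}. Your two deviations --- the short-time reduction plus stitching (the paper instead enlarges $\eta$ with $T$ and works on $[0,T]$ directly) and the treatment of the high-frequency tail via rapid Sobolev-norm decay and unitarity (the paper uses Hausdorff--Young together with an explicit Cauchy--Schwarz bound on $\sum_{|k|_\infty\le K}|\hat f_k|\,|\hat\eta(\xi-k)|$) --- are both valid alternatives.

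There is, however, a gap in your treatment of the low-frequency piece. You write $\tilde u=\tfrac12e^{\iu t|\nabla|}(a-\iu|\nabla|^{-1}b)+\tfrac12e^{-\iu t|\nabla|}(a+\iu|\nabla|^{-1}b)$ and assert that Theorem~\ref{ThmDisStrich}, Lemma~\ref{LemTorusAbschn}, and the boundedness of $|\nabla|_\T\colon H^\gamma(\T^3)\to H^{\gamma-1}(\T^3)$ control $\tilde\pi_{CK}\tilde u$. But Theorem~\ref{ThmDisStrich} requires $a\pm\iu|\nabla|^{-1}b\in\dot H^\gamma(\R^3)$, and here $|\nabla|^{-1}$ is the \emph{full-space} operator applied to $b=\iu\,\eta\,E|\nabla|_\T\pi_Kf$. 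Since multiplication by the compactly supported $\eta$ destroys the discrete Fourier support, $\hat b$ is generically nonzero at $\xi=0$, so $|\nabla|^{-1}b$ is not controlled by the (irrelevant) boundedness of $|\nabla|_\T$; for $\gamma<1$ the quantity $\||\nabla|^{\gamma-1}b\|_{L^2}$ needs a dedicated argument near the origin. The paper's proof handles exactly this point by inserting an extra cutoff $\tilde\pi_1$: the $\tilde\pi_1$-piece is treated via the bounded multiplier $t\sinc(t|\nabla|)$ and Bernstein (never forming $|\nabla|^{-1}b$), and only on the medium frequencies $(I-\tilde\pi_1)\tilde\pi_{2K}$, where $|\nabla|^{-1}$ is harmless, does one pass to the half-wave form. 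Your version can be repaired by supplying the missing low-frequency bound --- e.g.\ $\sup_{|\xi|<1}|\hat b(\xi)|\lesssim\|f\|_{L^2(\T^3)}$ and $\int_{|\xi|<1}|\xi|^{2\gamma-2}\,\mathrm d\xi<\infty$ because $2\gamma-2>-3$ --- but as written the step is unjustified, and the operator you invoke is not the one that actually occurs.
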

\begin{proof}
	We only give the proof for the first estimate, since the second one can be shown in the same way, using {Proposition} \ref{ThmEndp} instead of {Proposition} \ref{ThmDisStrich}. We define the function $v(t) \coloneqq e^{\iu t|\nabla|}\pi_K f$ for $t \in \R$. Since
	\begin{equation*}
		v(t) = e^{\iu t|\nabla|}\pi_K f = \cos(t|\nabla|)\pi_Kf + \iu |\nabla|^{-1}\sin(t|\nabla|)|\nabla|\pi_K f
	\end{equation*}
	is the smooth solution to the linear homogeneous wave equation on $\R \times \T^3$ with initial data $(\pi_Kf,\iu  |\nabla|\pi_Kf)$,
	the extended function $Ev$ solves the corresponding problem on $\R \times \R^3$ with extended initial data $(E\pi_Kf,\iu E |\nabla|\pi_Kf)$, i.e.,
	\begin{equation*}
		(\partial_{tt}-\Delta)Ev = 0, \quad Ev(0) = E\pi_Kf, \quad \partial_tEv(0) = \iu E |\nabla|\pi_Kf. 
	\end{equation*}
	Let $\eta \in C_c^\infty(\R^3)$ be a cut-off function such that $\eta = 1$ on $B(0,\sqrt 3 \pi + T)$. The function
	\begin{equation*}
		w(t) \coloneqq \cos(t|\nabla|)(\eta E\pi_Kf) + \iu|\nabla|^{-1}\sin(t|\nabla|)(\eta E |\nabla|\pi_Kf), \quad t \in \R,
	\end{equation*}
	solves the same full space wave equation with truncated initial data. Finite speed of propagation yields $Ev(t,x) = w(t,x)$ for all $(t,x) \in \R^{1+3}$ with $|t|+|x| \le \sqrt 3 \pi + T$. This condition is in particular satisfied if $(t,x) \in [0,T] \times (-\pi,\pi)^3$. Consequently,
	\begin{align}\label{DisStrichPerZerl}
		&\|v(n\tau)\|_{\ell^p_{\tau,T}L^q(\T^3)} \\
		&= \|Ev(n\tau)\|_{\ell^p_{\tau,T}L^q((-\pi,\pi)^3)} = \|w(n\tau)\|_{\ell^p_{\tau,T}L^q((-\pi,\pi)^3)} \le \|w(n\tau)\|_{\ell^p_{\tau,T}L^q(\R^3)} \nonumber \\
		&\le \|\cos(n\tau|\nabla|)\eta E\pi_Kf\|_{\ell^p_{\tau,T}L^q(\R^3)} + \||\nabla|^{-1} \sin(n\tau|\nabla|) \eta E |\nabla|\pi_Kf\|_{\ell^p_{\tau,T}L^q(\R^3)}.\nonumber
	\end{align}
	We decompose the cosine-term in \eqref{DisStrichPerZerl} as 
	\begin{align}\label{DisStrichPerCos}
		&\|\cos(n\tau|\nabla|)\eta E\pi_Kf\|_{\ell^p_{\tau,T}L^q(\R^3)} \\ 
		&\le \|\tilde \pi_{2K}e^{\pm \iu n\tau|\nabla|}\eta E\pi_Kf\|_{\ell^p_{\tau,T}L^q(\R^3)} + \|(I-\tilde\pi_{2K})e^{\pm \iu n\tau|\nabla|}\eta E\pi_Kf\|_{\ell^p_{\tau,T}L^q(\R^3)}.\nonumber
	\end{align}
	The first term of \eqref{DisStrichPerCos} is estimated using {Proposition} \ref{ThmDisStrich} and Lemma \ref{LemTorusAbschn}, which gives
	\begin{align*}
		\|\tilde \pi_{2K}e^{\pm \iu n\tau|\nabla|}\eta E\pi_Kf\|_{\ell^p_{\tau,T}L^q(\R^3)} &\lesssim_{p,q} (1+K\tau)^{\frac1p} \|\eta E\pi_Kf\|_{\dot H^\gamma(\R^3)} \\
		&\lesssim_T (1+K\tau)^{\frac1p} \|f\|_{H^\gamma(\T^3)}.
	\end{align*}
	For the second term of \eqref{DisStrichPerCos}, we first compute the Fourier transform
	\begin{equation*}
		(2\pi)^{\frac32}\mathcal F(\eta E\pi_Kf) = \hat \eta * \mathcal F(E\pi_Kf) = \hat \eta * \sum_{|k|_\infty \le K} \hat f_k \delta_k  = \sum_{|k|_\infty \le K} \hat f_k \hat \eta(\cdot-k),
	\end{equation*}
	where $\delta_k$ denotes the Dirac delta at $k \in \Z^3$.
	We now use the Hausdorff--Young inequality to obtain
	\begin{align*}
		&\|(I-\tilde \pi_{2K})e^{\pm \iu n\tau|\nabla|}\eta E\pi_Kf\|_{\ell^p_{\tau,T}L^q(\R^3)} \\
		&\le \|\mathbbm1_{\{|\xi|\ge 2K\}} e^{\pm\iu n \tau|\xi|} \mathcal F(\eta E\pi_Kf)\|_{\ell^p_{\tau,T}L^{q'}(\R^3)}\\
		&\lesssim_T \|\mathbbm1_{\{|\xi|\ge 2K\}} \sum_{|k|_\infty \le K} |\hat f_k| |\hat \eta(\cdot-k)|\|_{L^{q'}(\R^3)}. 
	\end{align*}
	For $|\xi|\ge 2K\ge \sqrt{\tfrac43}|k|$, we can estimate
	\[ \sum_{|k|_\infty \le K} |\hat f_k| \lesssim K^{\frac32} \Big(\sum_{|k|_\infty \le K} |\hat f_k|^2\Big)^{\frac12} \lesssim |\xi|^{\frac32} \|f\|_{L^2(\T^3)} \]
	thanks to {the Cauchy--Schwarz inequality and the Parseval equality}. Moreover,
	\[|\hat \eta(\xi-k)| \lesssim_\eta |\xi-k|^{-5} \le (|\xi|-|k|)^{-5} \lesssim |\xi|^{-5} \]
	since $\hat \eta$ is a Schwartz function. Altogether, this gives
	\begin{align*}
		\|(I-\tilde \pi_{2K})e^{\pm \iu n\tau|\nabla|}\eta E\pi_Kf\|_{\ell^p_{\tau,T}L^q(\R^3)} &\lesssim_T  \|\mathbbm1_{\{|\xi|\ge 2K\}}|\xi|^{-\frac72}\|_{L^{q'}(\R^3)} \|f\|_{L^2(\T^3)} \\
		&\lesssim \|f\|_{L^2(\T^3)}.
	\end{align*}
	For the sine term in \eqref{DisStrichPerZerl}, we treat the low frequencies separately to avoid problems coming from the negative homogeneous derivative $|\nabla|^{-1}$.
	We decompose
	\begin{align*}
		&\||\nabla|^{-1} \sin(n\tau|\nabla|) \eta E |\nabla|\pi_Kf\|_{\ell^p_{\tau,T}L^q(\R^3)} \\
		&\le  \|\tilde \pi_1|\nabla|^{-1} \sin(n\tau|\nabla|) \eta E |\nabla|\pi_Kf\|_{\ell^p_{\tau,T}L^q(\R^3)} \\
		&\quad + \|(I-\tilde \pi_1)\tilde \pi_{2K}|\nabla|^{-1} e^{\pm\iu n\tau|\nabla|} \eta E |\nabla|\pi_Kf\|_{\ell^p_{\tau,T}L^q(\R^3)} \\
		&\quad + \|(I-\tilde \pi_{2K})|\nabla|^{-1} e^{\pm\iu n\tau|\nabla|} \eta E |\nabla|\pi_Kf\|_{\ell^p_{\tau,T}L^q(\R^3)}.
	\end{align*}
	For the low frequencies, we use Bernstein's inequality in space, Hölder's inequality in time, the boundedness of $x \mapsto \frac1x\sin x$ and finally Lemma \ref{LemTorusAbschn} to obtain
	\begin{align*}
		&\|\tilde \pi_1|\nabla|^{-1} \sin(n\tau|\nabla|) \eta E |\nabla|\pi_Kf\|_{\ell^p_{\tau,T}L^q(\R^3)} \\
		&\lesssim_T \|\tilde \pi_1|\nabla|^{-1} \sin(n\tau|\nabla|) \eta E |\nabla|\pi_Kf\|_{\ell^\infty_{\tau,T}L^2(\R^3)} \\
		&\lesssim_T \|\tilde \pi_1\eta E |\nabla|\pi_Kf\|_{L^2(\R^3)} \lesssim \|\eta E |\nabla|\pi_Kf\|_{H^{\gamma-1}(\R^3)} \lesssim_T \| |\nabla|\pi_Kf\|_{H^{\gamma-1}(\T^3)} \\ 
		&\le \|f\|_{H^{\gamma}(\T^3)}.
	\end{align*}
	The medium and high frequency terms are treated with the same technique used for the cosine-term. 
	{Proposition} \ref{ThmDisStrich} and Lemma \ref{LemTorusAbschn} give 
	\begin{align*}
		&\|(I-\tilde \pi_1)\tilde \pi_{2K}|\nabla|^{-1} e^{\pm\iu n\tau|\nabla|} \eta E |\nabla|\pi_Kf\|_{\ell^p_{\tau,T}L^q(\R^3)} \\
		&\lesssim_{p,q} (1+K\tau)^{\frac1p} \|(I-\tilde \pi_1)|\nabla|^{-1} \eta E |\nabla|\pi_Kf\|_{\dot H^\gamma(\R^3)} \\ &\lesssim (1+K\tau)^{\frac1p} \|\eta E |\nabla|\pi_Kf\|_{H^{\gamma-1}(\R^3)} \lesssim_T (1+K\tau)^{\frac1p}\|f\|_{H^\gamma(\T^3)},
	\end{align*}
	where the operator $I-\tilde \pi_1$ was used to replace the homogeneous by the inhomogeneous Sobolev norm.
	Finally, we get similar as above
	\begin{align*}
		&\|(I-\tilde \pi_{2K})|\nabla|^{-1} e^{\pm\iu n\tau|\nabla|} \eta E |\nabla|\pi_Kf\|_{\ell^p_{\tau,T}L^q(\R^3)} \\
		&\le \|\mathbbm1_{\{|\xi|\ge 2K\}}|\xi|^{-1}  e^{\pm\iu n \tau|\xi|} \mathcal F(\eta E|\nabla|\pi_Kf)\|_{\ell^p_{\tau,T}L^{q'}(\R^3)}\\
		&\lesssim_T \|\mathbbm1_{\{|\xi|\ge 2K\}} |\xi|^{-1}  \sum_{|k|_\infty \le K} |k\hat f_k \hat \eta(\cdot-k)|\|_{L^{q'}(\R^3)} \\
		&\lesssim \|\mathbbm1_{\{|\xi|\ge 2K\}}  \sum_{|k|_\infty \le K} |\hat f_k \hat \eta(\cdot-k)|\|_{L^{q'}(\R^3)} \\
		&\lesssim_T \|f\|_{L^2(\T^3)}.
	\end{align*}
	The assertion now follows from \eqref{DisStrichPerZerl}.
\end{proof}

We now show that the discrete-time Strichartz estimates imply the ones in continuous time, using an argument from Theorem 1.3 of \cite{TaoBilin}. The estimates could also be deduced from the full space inequalities reasoning as in Theorem \ref{ThmDisStrichTorus}, cf.\ \cite{KapCP1}. 

\begin{Corollary}\label{CorKontStrich}
	Let $(p,q,\gamma)$ be admissible and $T>0$. We then have the estimates
	\begin{align*}
		\|e^{\iu t|\nabla|} f \|_{L^p_TL^q(\T^3)} &\lesssim_{p,q,T}\|f\|_{H^\gamma(\T^3)}, \\ 
		\|e^{\iu t|\nabla|}\pi_K h \|_{L^2_{T}L^\infty(\T^3)} &\lesssim_{T} (1+ \log K)^{\frac12} \|h\|_{H^1(\T^3)} ,
	\end{align*}
for all $f \in H^\gamma(\T^3)$, $h \in H^1(\T^3)$, and $K \ge 1$.
\end{Corollary}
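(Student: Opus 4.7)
The plan is to deduce both bounds from their discrete-time counterparts in Theorem \ref{ThmDisStrichTorus} by passing $\tau \to 0^+$ in a Riemann-sum sense, following the strategy of Theorem 1.3 in \cite{TaoBilin} referenced in the excerpt. The key observation is that, with $K$ held fixed, the constants $(1+K\tau)^{1/p}$ and $(1+K\tau+\log K)^{1/2}$ on the right-hand sides of Theorem \ref{ThmDisStrichTorus} collapse to $1$ and $(1+\log K)^{1/2}$ respectively as $\tau \to 0$, exactly matching the targeted continuous-time prefactors.

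First I would treat trigonometric polynomials. Fix $K \ge 1$ and take $f = \pi_K f \in H^\gamma(\T^3)$; then $e^{\iu t|\nabla|}f$ is a trigonometric polynomial in $x$ depending continuously on $t \in \R$, so $t \mapsto \|e^{\iu t|\nabla|}f\|_{L^q(\T^3)}^p$ is continuous on $[0,T]$ and its Riemann sums converge:
$$\lim_{\tau \to 0^+}\|e^{\iu n\tau|\nabla|}f\|_{\ell^p_{\tau,T}L^q(\T^3)}^p = \|e^{\iu t|\nabla|}f\|_{L^p_TL^q(\T^3)}^p.$$
Inserting the first inequality of Theorem \ref{ThmDisStrichTorus} on the left and letting $\tau\to 0^+$ then yields the first continuous-time estimate for every trigonometric polynomial, with a constant independent of $K$. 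The same reasoning applied to the endpoint inequality, using that $\pi_Kh$ is automatically a trigonometric polynomial for every $h \in H^1(\T^3)$, gives the second bound directly, since the logarithmic factor survives the limit.

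It remains to extend the first estimate to general $f \in H^\gamma(\T^3)$ by density. Setting $f_K \coloneqq \pi_Kf$, we have $f_K \to f$ in $H^\gamma(\T^3)$, and the estimate just proven shows that $\{e^{\iu t|\nabla|}f_K\}$ is Cauchy in $L^p_TL^q(\T^3)$; its limit there necessarily agrees almost everywhere with $e^{\iu t|\nabla|}f$, because both limits already coincide in $C([0,T],H^\gamma(\T^3))$. Passing to the limit in the estimate for $f_K$ then gives the bound for $f$. The one point requiring care is exactly this last step: for admissible triples with $p<\infty$ one has $\gamma < 3/2$ and the fixed-time Sobolev embedding $H^\gamma \hookrightarrow L^q$ fails at the scaling-admissible exponent $q$, so $e^{\iu t|\nabla|}f \in L^p_TL^q$ is not even clear a priori. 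The Cauchy-sequence argument establishes both membership and the estimate simultaneously, and is thus the real content of the passage from trigonometric polynomials to general $H^\gamma$ data.
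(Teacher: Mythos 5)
Your proof is correct. Both you and the paper deduce the continuous-time estimates from the discrete-time Theorem~\ref{ThmDisStrichTorus}, but the mechanism by which you pass from sums to integrals differs. The paper replaces $h$ by $e^{\iu\theta|\nabla|}h$ in the discrete estimate, integrates $\theta$ over $[0,\tau]$, and uses the exact tiling identity $\int_0^\tau\sum_{n=0}^N \|\cdot\|(n\tau+\theta)\dd\theta = \int_0^{(N+1)\tau}\|\cdot\|(s)\dd s$; it then simply \emph{chooses} $\tau=1/K$, at which point $(1+K\tau+\log K)^{1/2}\eqsim(1+\log K)^{1/2}$ and there is no limit to take. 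You instead hold $K$ fixed, observe that $(1+K\tau+\log K)^{1/2}\to(1+\log K)^{1/2}$ as $\tau\to 0^+$, and recover the integral as a Riemann-sum limit; this requires the integrand $t\mapsto\|e^{\iu t|\nabla|}f\|_{L^q}^p$ to be continuous, which you correctly secure by first restricting to trigonometric polynomials. Both routes then need the same density argument (with the same Cauchy-in-$L^p_TL^q$ reasoning you give) to extend the first estimate to general $f\in H^\gamma$, whereas the second estimate is already posed with $\pi_K$ and needs none. The paper's shift-and-average is slightly slicker in that it produces an exact identity for a single step size and does not invoke Riemann integrability of the map $t\mapsto\|e^{\iu t|\nabla|}f\|_{L^q}^p$; your version is more elementary and makes the role of the $\tau\to 0$ regime transparent. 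Your closing remark about the failure of the fixed-time embedding $H^\gamma\hookrightarrow L^q$ for $p<\infty$, and hence the genuine content of the density step, is accurate and worth stating.
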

\begin{proof}
	We only give the proof for the second estimate, since it is somewhat non-standard. The first one can be proven in the same way, additionally using the density of functions having compact Fourier support in $H^\gamma(\T^3)$. From Theorem \ref{ThmDisStrichTorus} we get 
	\[\tau \sum_{n=0}^N \|\pi_K e^{\iu n\tau|\nabla|} h \|_{L^\infty(\T^3)}^2 \lesssim_{T} (K\tau+\log K) \|h\|_{H^1(\T^3)}^2 \]
	for all $\tau \in [1/K,1]$ and $N \in \N_0$ with $N\tau \le T$. We now replace $h$ with $e^{\iu \theta|\nabla|}h$ and integrate from $0$ to $\tau$ to obtain
	\[\int_0^\tau  \sum_{n=0}^N \|\pi_K e^{\iu (n\tau+\theta)|\nabla|} h \|_{L^\infty(\T^3)}^2 \dd \theta \lesssim_{T} (K\tau+\log K) \|h\|_{H^1(\T^3)}^2, \]
	which implies the assertion if we set $\tau=1/K$.
\end{proof}

\subsection{Application to the wave equation on $\T^3$}
From now on we will only work on $\T^3$, hence we will abbreviate $L^q = L^q(\T^3)$ etc. Moreover, we will only use admissible triples $(p,q,\gamma)$ with derivative loss $\gamma=1$. We call a pair $(p,q)$ \emph{$H^1$-admissible} if $(p,q,1)$ is admissible in the sense of \eqref{DefAdmissible}.

\begin{Corollary}\label{CorStrichWave}
	Let $T>0$, $f \in H^1$, $g \in L^2$, $F \in L^1_TL^2$, and $w \in C([0,T],H^1)$ be the solution to the inhomogeneous wave equation
	\[\partial_{tt}w-\Delta w = F, \quad w(0) = f, \quad \partial_tw(0) = g. \]
	Let moreover $(p,q)$ be $H^1$-admissible. Then $w$ satisfies the estimates
	\begin{align*}
	\|w\|_{L^p_TL^q} + \|\pi_{\tau^{-1}}w(n\tau)\|_{\ell^p_{\tau,T}L^q} &\lesssim_{p,q,T} \|f\|_{H^1} + \|g\|_{L^2} + \|F\|_{L^1_TL^2}, \\
	\|\pi_Kw\|_{L^2_TL^\infty} &\lesssim_T (1+\log K)^\frac12\Big(\|f\|_{H^1} + \|g\|_{L^2} + \|F\|_{L^1_TL^2}\Big)
	\end{align*}
	for all $\tau \in (0,1]$ and $K \ge 1$.
\end{Corollary}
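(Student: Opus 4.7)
My plan is to apply Duhamel's formula and treat the homogeneous and inhomogeneous parts of $w$ separately. The homogeneous part
\[
w_h(t) = \cos(t|\nabla|) f + \frac{\sin(t|\nabla|)}{|\nabla|}\, g
\]
falls directly into the scope of the Strichartz estimates from the previous subsection. I split $g$ into its zero Fourier mode $P_0g = \hat g_0 (2\pi)^{-3/2}$ and its remaining part $g_{\neq}$. On $g_{\neq}$, the operator $|\nabla|^{-1}$ is bounded from $L^2$ into $H^1$, so writing $\cos$ and $\sin$ in terms of $e^{\pm\iu t|\nabla|}$ reduces the three bounds to Corollary \ref{CorKontStrich} and to Theorem \ref{ThmDisStrichTorus} applied with $K=\tau^{-1}$, for which $1+K\tau = 2$. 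The zero-mode contribution is just $tP_0g$, whose $L^q$ norm is bounded by $T\|g\|_{L^2}$ and hence controlled in each of the three space-time norms. Throughout, one uses that $\pi_{\tau^{-1}}$ and $\pi_K$ commute with $e^{\pm\iu t|\nabla|}$ and with $|\nabla|^{-1}$ since all three are Fourier multipliers.

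For the inhomogeneous part
\[
w_i(t) = \int_0^t \frac{\sin((t-s)|\nabla|)}{|\nabla|}\, F(s)\, ds,
\]
I extend $F$ by zero outside $[0,T]$ and apply Minkowski's inequality in $s$ to pull the space-time norm $\|\cdot\|_X$ (where $X$ is any of the three norms in the statement) inside the integral:
\[
\|w_i\|_X \;\le\; \int_0^T \Big\|\frac{\sin((\,\cdot\,-s)|\nabla|)}{|\nabla|}\, F(s)\Big\|_X\, ds.
\]
Using $\sin((t-s)|\nabla|) = \tfrac{1}{2\iu}\bigl(e^{\iu t|\nabla|}e^{-\iu s|\nabla|} - e^{-\iu t|\nabla|}e^{\iu s|\nabla|}\bigr)$ and the unitarity of $e^{\pm\iu s|\nabla|}$ on every Sobolev space, the $s$-integrand reduces, on the non-zero-frequency part of $F(s)$, to a Strichartz norm of $e^{\pm\iu t|\nabla|}$ applied to a function of $H^1$-norm $\lesssim \|F(s)\|_{L^2}$, while the zero Fourier mode contributes a term pointwise bounded by $T|P_0F(s)| \lesssim \|F(s)\|_{L^2}$. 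Invoking Corollary \ref{CorKontStrich} and Theorem \ref{ThmDisStrichTorus} exactly as in the homogeneous case produces a constant independent of $s$, and integration in $s$ yields the claimed $\|F\|_{L^1_TL^2}$ bound.

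I do not expect a substantial obstacle. The two points requiring care are the zero Fourier mode (where $\sin(t|\nabla|)/|\nabla|$ has to be interpreted via the bounded multiplier $\operatorname{sinc}(t|\cdot|)\,t$ and is handled by the separate elementary estimate above), and the fact that the shifted discrete Strichartz norm appearing after factoring out $e^{\mp\iu s|\nabla|}$ is of the form of Theorem \ref{ThmDisStrichTorus} itself after absorbing $e^{\mp\iu s|\nabla|}$ into the initial datum. Both are routine.
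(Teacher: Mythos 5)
Your proposal is correct and follows essentially the same route as the paper's proof: a Duhamel decomposition into homogeneous and inhomogeneous parts, rewriting $\cos$ and $t\,\sinc(t|\nabla|)$ in terms of the half-wave groups $e^{\pm\iu t|\nabla|}$, separate treatment of the zero Fourier mode (where $|\nabla|^{-1}$ is not defined), Minkowski's inequality in $s$ for the Duhamel integral combined with unitarity of $e^{\mp\iu s|\nabla|}$, and then invoking Corollary~\ref{CorKontStrich} and Theorem~\ref{ThmDisStrichTorus} with $K=\tau^{-1}$. The only cosmetic difference is that the paper writes $t\,\sinc(t|\nabla|)$ where you write $\sin(t|\nabla|)/|\nabla|$, and the paper explicitly records only the discrete-time case while noting the others are analogous.
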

\begin{proof}
	By Duhamel's formula, $w$ is given by
	\[w(t) = \cos(t|\nabla|)f + t \sinc(t|\nabla|)g + \int_0^t(t-s)\sinc((t-s)|\nabla|)F(s) \dd s,  \]
	for $t \in [0,T]$, where $\sinc(z) = \sin(z)/z$. We only give the details for the discrete-time estimate, since the others are obtained similarly, using Corollary \ref{CorKontStrich} in place of Theorem \ref{ThmDisStrichTorus}. First, by Theorem \ref{ThmDisStrichTorus},
	\[ \|\pi_{\tau^{-1}}\cos(n\tau|\nabla|)f\|_{\ell^p_{\tau,T}L^q} \le \|\pi_{\tau^{-1}}e^{\pm \iu n\tau |\nabla|}f\|_{\ell^p_{\tau,T}L^q} \lesssim_{p,q,T} \|f\|_{H^1}. \]
	For the other terms we need to treat the zero-th Fourier coefficient separately. We get
	\begin{align*}
		\|\pi_{\tau^{-1}}n\tau\sinc(n\tau|\nabla|)g\|_{\ell^p_{\tau,T}L^q} &\lesssim_T \|\pi_{\tau^{-1}}e^{\pm \iu n \tau|\nabla|}|\nabla|^{-1}(g-\hat g_0)\|_{\ell^p_{\tau,T}L^q} + |\hat g_0| \\
		&\lesssim_{p,q,T} \||\nabla|^{-1}(g-\hat g_0)\|_{H^1} + \|g\|_{L^2} \lesssim \|g\|_{L^2},
	\end{align*}
	and similarly
	\begin{align*}
		&\Big\|\pi_K\int_0^{n\tau}(n\tau-s)\sinc((n\tau-s)|\nabla|)F(s) \dd s\Big\|_{\ell^p_{\tau,T}L^q} \\
		&\le \int_0^{T}\|\pi_K(n\tau-s)\sinc((n\tau-s)|\nabla|)F(s)\|_{\ell^p_{\tau,T}L^q} \dd s \\
		&\lesssim_T \int_0^{T}\|\pi_Ke^{\iu(n\tau-s)|\nabla|}|\nabla|^{-1}(F(s)-\hat F_0(s))\|_{\ell^p_{\tau,T}L^q} \dd s + \int_0^T|\hat F_0(s)| \dd s \\
		&\lesssim_{p,q,T} \int_0^T \|e^{-\iu s|\nabla|}|\nabla|^{-1}(F(s)-\hat F_0(s))\|_{H^1} \dd s + \int_0^T\|F(s)\|_{L^2} \dd s \\
		&\lesssim \|F\|_{L^1_TL^2}. \qedhere
	\end{align*}
\end{proof}

It is often convenient to work with the wave equation in first-order formulation. We therefore define the operator
\begin{equation}\label{DefA}
 A \coloneqq \begin{pmatrix}0&I\\\Delta & 0\end{pmatrix},
\end{equation}
which maps continuously $H^{r+1} \times H^{r} \to H^r \times H^{r-1}$ and generates the strongly continuous group of operators
\[e^{tA} =  \begin{pmatrix} \cos(t|\nabla|) & t \sinc(t|\nabla|) \\ -|\nabla|\sin(t|\nabla|) & \cos(t|\nabla|) \end{pmatrix} \]
on $H^r \times H^{r-1}$, for all $r \in \R$.

\begin{Corollary}\label{CorDisStrichWave}
	Let $f \in H^1$, $g \in L^2$, and $F \in \ell^1L^2$. For $\tau \in (0,1]$ and $n \in \N$, we define
	\[W_n \coloneqq e^{n\tau A}(f,g) + \tau\sum_{k=0}^ne^{(n-k)\tau A}\begin{pmatrix}0\\F_k\end{pmatrix}.  \]
	Let $w_n$ be the first component of $W_n$. For $T \ge 0$ and $H^1$-admissible $(p,q)$ we then get the estimate
	\[ \|\pi_{\tau^{-1}}w_n \|_{\ell^p_{\tau,T}L^q} \lesssim_{p,q,T} \|f\|_{H^1} + \|g\|_{L^2} + \|F\|_{\ell^1_{\tau,T}L^2}.\]
\end{Corollary}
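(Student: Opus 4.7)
The plan is to follow exactly the same strategy as in the proof of Corollary \ref{CorStrichWave}, with the continuous Duhamel integral replaced by the discrete convolution sum and the continuous Strichartz estimates replaced by the discrete-time Theorem \ref{ThmDisStrichTorus}. First, I would expand the first component explicitly using the matrix representation of $e^{tA}$:
\[w_n = \cos(n\tau|\nabla|)f + n\tau\sinc(n\tau|\nabla|)g + \sum_{k=0}^n (n-k)\tau\sinc((n-k)\tau|\nabla|)F_k.\]
For the cosine term, writing $\cos(\cdot)=\tfrac12(e^{\iu\,\cdot}+e^{-\iu\,\cdot})$ and applying Theorem \ref{ThmDisStrichTorus} with $K=\tau^{-1}$ (so that $(1+K\tau)^{1/p}=2^{1/p}$ is a harmless constant) immediately gives the bound $\lesssim_{p,q,T}\|f\|_{H^1}$.

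For the $g$-term, the only subtlety is the zero Fourier mode of $g$, since $\sin(t|\nabla|)/|\nabla|$ has a $|\nabla|^{-1}$ singularity. Splitting $g = (g-\hat g_0) + \hat g_0$, I would rewrite the non-zero-mode part as $\pm\tfrac{1}{2\iu}e^{\pm\iu n\tau|\nabla|}|\nabla|^{-1}(g-\hat g_0)$, apply Theorem \ref{ThmDisStrichTorus}, and use $\||\nabla|^{-1}(g-\hat g_0)\|_{H^1}\lesssim\|g\|_{L^2}$. The zero-mode contribution $n\tau\,\hat g_0$ is trivially bounded by $T|\hat g_0|\lesssim_T\|g\|_{L^2}$. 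This mirrors exactly the treatment of the $t\sinc(t|\nabla|)g$ term in Corollary \ref{CorStrichWave}.

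For the inhomogeneous sum, I would apply Minkowski's inequality in $k$ to pull $\sum_{k=0}^n$ outside the $\ell^p_{\tau,T}L^q(n)$ norm. For each fixed $k$, the shift $m=n-k$ only enlarges the time window and hence increases the $\ell^p_\tau$ norm, so the $k$-th summand reduces to the same expression already analyzed for the $g$-term applied to $F_k$, yielding the bound $\lesssim_T\|F_k\|_{L^2}$. The resulting Minkowski sum, read as a Riemann-type sum with the $\tau$ weight built into the $\ell^1_{\tau,T}L^2$ norm, produces the claimed bound by $\|F\|_{\ell^1_{\tau,T}L^2}$.

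The main technical care throughout lies in isolating the zero Fourier modes of $g$ and of each $F_k$ so that the half-wave formulation of Theorem \ref{ThmDisStrichTorus} can be applied; there is no genuinely new difficulty beyond what already appeared in the proof of Corollary \ref{CorStrichWave}.
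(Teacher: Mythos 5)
Your proposal follows exactly the same route as the paper's proof, which consists of only two sentences: the $(f,g)$ contribution is already bounded inside the proof of Corollary~\ref{CorStrichWave} (where the cosine-as-half-wave and the $\sinc$-with-zero-mode-extraction arguments are carried out for the discrete norm), and the $F$ contribution is ``treated in the same manner as the corresponding term in Corollary~\ref{CorStrichWave}''. You have correctly supplied those details: the half-wave decomposition with $K=\tau^{-1}$, the extraction of $\hat g_0$ (and $\hat F_{k,0}$) to handle the $|\nabla|^{-1}$ singularity, and the Minkowski-plus-index-shift step for the discrete Duhamel sum are all what the paper intends.

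One piece of bookkeeping in the last step deserves to be made explicit. After Minkowski and the shift $m=n-k$, each summand is bounded by $\lesssim_{p,q,T}\|F_k\|_{L^2}$, so the resulting sum is $\sum_{k}\|F_k\|_{L^2}$, whereas $\|F\|_{\ell^1_{\tau,T}L^2}=\tau\sum_k\|F_k\|_{L^2}$. As literally written the definition of $W_n$ carries no $\tau$ in front of the sum, so the Minkowski step produces $\tau^{-1}\|F\|_{\ell^1_{\tau,T}L^2}$ rather than $\|F\|_{\ell^1_{\tau,T}L^2}$, and the phrase ``with the $\tau$ weight built into the $\ell^1_{\tau,T}L^2$ norm'' is glossing over this gap. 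The resolution is that the formula for $W_n$ should read $\tau\sum_{k=0}^n e^{(n-k)\tau A}(0,F_k)$, consistent with the discrete Duhamel formula~\eqref{DiskrDuhamelStrang} and with the way the corollary is actually invoked in Lemma~\ref{LemDiskrWellp}; once that $\tau$ is restored, your identification is exact and the argument closes correctly.
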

\begin{proof}
	The estimate for the term containing $(f,g)$ is already contained in Corollary \ref{CorStrichWave}. The term containing $F$ is treated in the same manner as the corresponding term in Corollary \ref{CorStrichWave}.
\end{proof}

\begin{Corollary}\label{CorStrichDualA}
	Let $(p,q)$ be $H^1$-admissible and $T>0$. Then we have the estimates
	\begin{align*}
		\Big\| \int_0^T  e^{-sA}\begin{pmatrix}0\\F(s)\end{pmatrix} \dd s \Big\|_{L^2\times H^{-1}} &\lesssim_{p,q,T}  \|F\|_{L^{p'}_TL^{q'}}, \\
		\Big\| \int_0^T  e^{-sA}\begin{pmatrix}0\\\pi_KG(s)\end{pmatrix} \dd s \Big\|_{L^2\times H^{-1}} &\lesssim_{T} (1+\log K)^{\frac12} \|G\|_{L^2_TL^1}, 
	\end{align*}
	for all $F \in L^{p'}_TL^{q'}$, $G \in L^2_TL^1$ and $K \ge 1$.
\end{Corollary}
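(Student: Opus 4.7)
The approach is a standard duality argument. From the explicit formula for $e^{tA}$ recorded just after \eqref{DefA}, one computes
\[e^{-sA}\begin{pmatrix}0\\F(s)\end{pmatrix} = \begin{pmatrix}-|\nabla|^{-1}\sin(s|\nabla|)F(s)\\\cos(s|\nabla|)F(s)\end{pmatrix},\]
where $|\nabla|^{-1}\sin(s|\nabla|)$ is interpreted as multiplication by $s$ on the zero-th Fourier mode, as in the proof of Corollary \ref{CorStrichWave}. The $L^2 \times H^{-1}$ norm of the time integral is then estimated component-wise via $\|(f_1,f_2)\|_{L^2\times H^{-1}} \le \|f_1\|_{L^2} + \|f_2\|_{H^{-1}}$.

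For the $L^2$ component, I would dualize against an arbitrary $\phi \in L^2$ with $\|\phi\|_{L^2}=1$. Since $|\nabla|^{-1}\sin(s|\nabla|)$ is a real Fourier multiplier, hence self-adjoint on $L^2$, one has
\[\Big\langle \phi,\int_0^T|\nabla|^{-1}\sin(s|\nabla|)F(s)\dd s\Big\rangle = \int_0^T\langle F(s),|\nabla|^{-1}\sin(s|\nabla|)\phi\rangle \dd s.\]
Hölder in space and time bounds the right-hand side by $\|F\|_{L^{p'}_TL^{q'}}\||\nabla|^{-1}\sin(s|\nabla|)\phi\|_{L^p_TL^q}$, and the latter factor is the Strichartz norm of the homogeneous wave solution with data $(0,\phi)$, so Corollary \ref{CorStrichWave} yields a bound $\lesssim_{p,q,T}\|\phi\|_{L^2}$. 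The $H^{-1}$ component is treated analogously by dualizing against $\psi\in H^1$ with $\|\psi\|_{H^1}=1$ and moving $\cos(s|\nabla|)$ onto $\psi$; Corollary \ref{CorStrichWave} applied with data $(\psi,0)$ gives $\|\cos(s|\nabla|)\psi\|_{L^p_TL^q}\lesssim_{p,q,T}\|\psi\|_{H^1}$. Combining the two parts proves the first inequality.

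For the endpoint estimate, I would run the same duality with $F$ replaced by $\pi_K G$ and use the self-adjointness of $\pi_K$ to move the projector to the test-function side. In place of the Hölder pair $(L^{p'}_TL^{q'},L^p_TL^q)$, I would now pair $G$ with the test function via $(L^2_TL^1,L^2_TL^\infty)$, which reduces matters to bounding $\|\pi_K|\nabla|^{-1}\sin(s|\nabla|)\phi\|_{L^2_TL^\infty}$ and $\|\pi_K\cos(s|\nabla|)\psi\|_{L^2_TL^\infty}$. These are precisely the quantities covered by the logarithmic endpoint inequality in Corollary \ref{CorStrichWave}, producing the factor $(1+\log K)^{1/2}$ multiplied by $\|\phi\|_{L^2}$ and $\|\psi\|_{H^1}$, respectively.

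I do not anticipate any genuine obstacle; the argument is essentially the formal dual of Corollary \ref{CorStrichWave}. The one point requiring mild care is the zero-mode of $|\nabla|^{-1}\sin(s|\nabla|)$, which is handled exactly as in Corollary \ref{CorStrichWave} by splitting off the zero-th Fourier coefficient and estimating its contribution directly (using $|s|\le T$ on the time interval).
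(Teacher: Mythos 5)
Your proposal is correct and takes essentially the same route as the paper: the paper also argues by duality, citing the dual scalar estimates \eqref{eq:DualStrich} obtained from Corollary \ref{CorKontStrich} and then splitting $e^{-sA}(0,\cdot)^{T}$ into its sine ($L^2$) and cosine ($H^{-1}$) components, with the zero Fourier mode treated separately exactly as you flag. You merely carry out the duality pairing explicitly against test functions $\phi\in L^2$ and $\psi\in H^1$ and invoke the forward bounds of Corollary \ref{CorStrichWave} rather than quoting their dual form, which is an equivalent presentation of the same argument.
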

\begin{proof}
	These estimates follow from the dual versions from Corollary \ref{CorKontStrich} for $\gamma=1$, which are given by
	\begin{align}
		\Big\|\int_0^T e^{-\iu s|\nabla|}F(s) \dd s\Big\|_{H^{-1}} &\lesssim_{p,q,T} \|F\|_{L^{p'}_TL^{q'}}, \nonumber \\
		\label{eq:DualStrich}\Big\|\int_0^T \pi_Ke^{-\iu s|\nabla|}G(s) \dd s\Big\|_{H^{-1}} &\lesssim_{T} (1+\log K)^\frac12\|G\|_{L^2_TL^1}. 
	\end{align}
	We give the details for the term containing $G$. We split
	\begin{align*}
		&\Big\| \int_0^T  e^{-sA}\begin{pmatrix}0\\\pi_KG(s)\end{pmatrix} \dd s \Big\|_{L^2\times H^{-1}} \\
		 &\lesssim \Big\| \int_0^T \pi_K s\sinc(-s|\nabla|)G(s) \dd s \Big\|_{L^2} + \Big\| \int_0^T \pi_K \cos(-s|\nabla|)G(s) \dd s \Big\|_{H^{-1}}.
	\end{align*}
	The cosine term is estimated directly using \eqref{eq:DualStrich}, and for the sine term we compute
	\begin{align*}
		&\Big\| \int_0^T \pi_K s\sinc(-s|\nabla|)G(s) \dd s \Big\|_{L^2} \\
		&\lesssim \Big\| \int_0^T \pi_K e^{\pm \iu s |\nabla|}(G(s)-\hat G_0(s)) \dd s \Big\|_{H^{-1}} + \Big| \int_0^T s\hat G_0(s) \dd s \Big| \\
		&\lesssim_{T} (1+\log K)^\frac12 \|G-\hat G_0\|_{L^2_TL^1} + \|G\|_{L^1_TL^1} \lesssim_T (1+\log K)^\frac12 \|G\|_{L^2_TL^1}, 
	\end{align*}
	using \eqref{eq:DualStrich} and $|\hat G_0(s)| \le \|G(s)\|_{L^1}$.
\end{proof}

\subsection{Some properties of the filter operator $\pi_K$}

We will need the following Bernstein-type estimates.

\begin{Lemma}\label{LemBernstein}
	Let $r \in \R$, $s \ge 0$ and $q \in [2,\infty)$. We then have the estimates
	\begin{align*}
		\|\pi_K h\|_{H^{s+r}} &\lesssim K^s \|h\|_{H^r}, \\
		\|\pi_K f\|_{L^q} &\lesssim_q K^{3(\frac12-\frac1q)} \|f\|_{L^2},
	\end{align*}
	for all $K \ge 1$, $f \in L^2$, and $h \in H^r$.
\end{Lemma}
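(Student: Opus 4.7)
Both estimates follow directly from the Fourier characterization of the norms involved, together with the explicit structure of the cutoff $\pi_K$.

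For the first estimate, I would simply compute from the definition \eqref{DefSobNormTorus}. Since $\widehat{\pi_Kh}_k = \hat h_k\mathbbm 1_{|k|_\infty \le K}$, one has
\[\|\pi_Kh\|_{H^{s+r}}^2 = \sum_{|k|_\infty \le K}(1+|k|^2)^{s+r}|\hat h_k|^2 \le (1+3K^2)^s \sum_{k\in\Z^3}(1+|k|^2)^r|\hat h_k|^2,\]
where I used that $|k|^2 \le 3|k|_\infty^2 \le 3K^2$ on the support of the cutoff. Taking square roots and using $K\ge 1$ gives the claim with an implicit constant depending only on $s$.

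For the second estimate, the plan is to interpolate between the trivial $L^2$ bound and an $L^\infty$ bound obtained by Cauchy--Schwarz on the Fourier side. The $L^2$ bound is just $\|\pi_Kf\|_{L^2} \le \|f\|_{L^2}$ by Parseval. For the $L^\infty$ bound I would estimate, using \eqref{DefPiPer} and Cauchy--Schwarz together with the fact that $\#\{k\in\Z^3 : |k|_\infty \le K\} = (2\lfloor K\rfloor+1)^3 \lesssim K^3$,
\[\|\pi_Kf\|_{L^\infty} \le (2\pi)^{-3/2}\sum_{|k|_\infty\le K}|\hat f_k| \le (2\pi)^{-3/2}\#\{|k|_\infty\le K\}^{1/2}\Bigl(\sum_{|k|_\infty\le K}|\hat f_k|^2\Bigr)^{1/2} \lesssim K^{3/2}\|f\|_{L^2}.\]

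For $q \in [2,\infty)$, writing $\theta = 2/q \in (0,1]$ and applying the log-convexity interpolation $\|\cdot\|_{L^q} \le \|\cdot\|_{L^2}^\theta \|\cdot\|_{L^\infty}^{1-\theta}$ to $\pi_K f$ yields
\[\|\pi_Kf\|_{L^q} \le \|\pi_Kf\|_{L^2}^{2/q}\|\pi_Kf\|_{L^\infty}^{1-2/q} \lesssim_q K^{\frac{3}{2}(1-\frac{2}{q})}\|f\|_{L^2} = K^{3(\frac12-\frac1q)}\|f\|_{L^2},\]
which is the desired bound.

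There is no real obstacle here; the only mild point worth being careful about is the use of the $|k|_\infty$-cube (rather than an $|k|_2$-ball) in the definition of $\pi_K$, which costs only a dimensional factor in the first estimate and a harmless multiplicative constant in the cardinality bound used for the $L^\infty$ estimate.
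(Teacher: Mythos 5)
Your proof is correct. The first estimate is handled the same way as in the paper (directly from the Fourier representation \eqref{DefSobNormTorus}). For the second estimate, however, the paper simply invokes the first estimate to get $\|\pi_K f\|_{H^{3(1/2-1/q)}} \lesssim K^{3(1/2-1/q)}\|f\|_{L^2}$ and then applies the Sobolev embedding $H^{3(1/2-1/q)}(\T^3)\hookrightarrow L^q(\T^3)$, whereas you instead derive an $L^\infty$ bound by Cauchy--Schwarz on the Fourier side and interpolate against the trivial $L^2$ bound using $\|g\|_{L^q}\le\|g\|_{L^2}^{2/q}\|g\|_{L^\infty}^{1-2/q}$. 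Both routes are valid and give the same constant dependence; your version is more self-contained (it avoids the Sobolev embedding theorem as a black box and only uses Parseval, Cauchy--Schwarz and log-convexity of $L^p$ norms) and, as a small bonus, the intermediate $L^\infty$ bound you prove holds at the endpoint $q=\infty$, which the Sobolev-embedding route does not directly supply since $H^{3/2}(\T^3)\not\hookrightarrow L^\infty(\T^3)$. The paper's route is more modular in that it literally reduces the second estimate to the first.
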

\begin{proof}
	The first estimate follows directly from the representation of the Sobolev norm \eqref{DefSobNormTorus}, and the second one uses the first one combined with Sobolev embedding.
\end{proof}

The following lemma quantifies the convergence $\pi_K \to I$ as $K \to \infty$, and will be used to control the error terms that arise from the insertion of the filter into the numerical scheme \eqref{Strang}.

\begin{Lemma}\label{LemPiNew}
	For all $K \ge 1$ and $s>0$, we can write
	\begin{align*}
	I-\pi_{K} = (K^{-1}|\nabla|\phi_K)^s,
	\end{align*}
	for an operator $\phi_K$ that is bounded uniformly in $K$ on all Sobolev spaces $H^r$, $r \in \R$. We moreover get the estimate
	\begin{equation*}
		\|(I-\pi_K)f\|_{ H^{\gamma}} \le K^{\gamma-r}\|f\|_{ H^{r}} 
	\end{equation*}
	for all $K \ge 1$, $r \in \R$, $\gamma \le r$, and $f \in  H^r$. 
\end{Lemma}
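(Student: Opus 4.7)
The plan is to work entirely on the Fourier side, where both assertions reduce to pointwise estimates on symbols.

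First, I would define $\phi_K$ as the Fourier multiplier with symbol
\[\phi_K(k) \coloneqq \begin{cases} K|k|^{-1}, & |k|_\infty > K, \\ 0, & |k|_\infty \le K, \end{cases}\]
for $k \in \Z^3$ (note $k=0$ lies in the second case so there is no division by zero, using $K\ge 1$). On the region $|k|_\infty > K$ we have $|k|\ge|k|_\infty > K$, hence $|\phi_K(k)|\le 1$ uniformly in $K$ and $k$. Since Fourier multipliers with bounded symbols act boundedly on every $H^r(\T^3)$ with norm equal to the $\ell^\infty$ norm of the symbol (by the definition \eqref{DefSobNormTorus} of the Sobolev norm), $\phi_K$ is bounded on every $H^r$ uniformly in $K$.

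Next, I would verify the identity $I-\pi_K = (K^{-1}|\nabla|\phi_K)^s$ by checking symbols. Since $\phi_K$, $|\nabla|$ and the scalar $K^{-1}$ all commute (they are Fourier multipliers), the $s$-th power of the composition is the Fourier multiplier with symbol
\[\bigl(K^{-1}|k|\phi_K(k)\bigr)^s = \mathbbm{1}_{\{|k|_\infty > K\}}(k)^s = \mathbbm{1}_{\{|k|_\infty > K\}}(k),\]
using that the symbol equals $0$ for $|k|_\infty\le K$ and equals $1$ for $|k|_\infty > K$. This is precisely the symbol of $I-\pi_K$, which proves the first claim.

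For the quantitative bound, the case $\gamma=r$ is immediate since $I-\pi_K$ has a $\{0,1\}$-valued symbol. For $\gamma<r$ I would argue directly from \eqref{DefSobNormTorus}: on the support $|k|_\infty>K$ one has $1+|k|^2 > K^2$ (using $K\ge 1$), so $(1+|k|^2)^{\gamma-r}\le K^{2(\gamma-r)}$ and
\[\|(I-\pi_K)f\|_{H^\gamma}^2 = \sum_{|k|_\infty > K}(1+|k|^2)^\gamma|\hat f_k|^2 \le K^{2(\gamma-r)}\sum_{|k|_\infty > K}(1+|k|^2)^r|\hat f_k|^2 \le K^{2(\gamma-r)}\|f\|_{H^r}^2.\]
There is no real obstacle here; the only point requiring a moment's care is verifying that the formal $s$-th power of the composition $K^{-1}|\nabla|\phi_K$ really does coincide with $I-\pi_K$ even though the symbol is supported on a proper subset of $\Z^3$. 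This works cleanly because all three factors are simultaneously diagonalized by the Fourier basis, so the $s$-th power acts as pointwise $s$-th power of the (nonnegative) symbol.
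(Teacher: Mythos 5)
Your proof is correct and takes essentially the same approach as the paper: you define the same Fourier multiplier $\phi_K$ with symbol $\mathbbm{1}_{\{|k|_\infty>K\}}K/|k|$, observe it is bounded by $1$, and verify the operator identity on the symbol side; for the quantitative bound, the paper writes the inequality in operator form via $\|(|\nabla|\phi_K)^{r-\gamma}f\|_{H^\gamma}\le\|f\|_{H^r}$, while you unpack the same fact as a pointwise estimate on the symbol, which is just a more explicit rendering of the identical argument.
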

\begin{proof}
	We work in Fourier space. Let $\phi_K$ be the Fourier multiplier for the function $\mathbbm1_{\{|k|_\infty>K\}}K/|k|$, which is bounded by $1$. The estimate then follows from
	\[ \|(I-\pi_K)f\|_{ H^{\gamma}} = K^{\gamma-r}\|(|\nabla|\phi_K)^{r-\gamma}f\|_{ H^{\gamma}} \le K^{\gamma-r}\|f\|_{ H^{r}}. \qedhere \]
\end{proof}

The next lemma will be crucially exploited in the error analysis of \eqref{Strang} when using the summation by parts formula {to exploit cancellations in the error terms}. This strategy is inspired by \cite{Averaged}, cf.\ Property (OF4) in Theorem 3.14 there. Roughly speaking, the idea is the following. 
From basic semigroup theory, for $v \in H^r \times H^{r-1}$, the integral \[\int_0^T e^{tA}v \dd t \in H^{r+1} \times H^r\] is an element of the domain of $A$; and \[A \int_0^T e^{tA}v \dd t = (e^{TA}-I)v.\]
We would like to exploit something similar in the discrete setting, namely, that \[\tau A \sum_{n=0}^{N-1} e^{n\tau A}\] is a bounded operator on $H^r \times H^{r-1}$, uniformly in $\tau \in (0,1]$ and $N \in \N$ with $N\tau \le T$.
If we formally insert the geometric sum formula, we obtain
\[\tau A \sum_{n=0}^{N-1} e^{n\tau A} = \tau A \frac{e^{N\tau A}-I}{e^{\tau A}-I}.\]
But this does not lead anywhere since the operator $e^{\tau A}-I$ might not be invertible for certain ``resonant'' step-sizes $\tau$. However, if we introduce the filter operator
\[\Pi_{\tau^{-1}} \coloneqq \operatorname{diag}(\pi_{\tau^{-1}},\pi_{\tau^{-1}})\]
and apply the assertion of the following Lemma \ref{LemCancel}, we get
\begin{equation}\label{eq:cancellation}
\Pi_{\tau^{-1}}\tau A \sum_{n=0}^{N-1} e^{n\tau A} = \Psi_\tau (e^{N\tau A}-I),
\end{equation}
which indeed is a bounded operator on $H^r \times H^{r-1}$ as desired.

\begin{Lemma}\label{LemCancel}
	For all $\tau \in (0,1]$, we can write
	\[
		\tau A\Pi_{\tau^{-1}} = (e^{\tau A}-I)\Psi_\tau,
	\]
	where the operator $\Psi_\tau$ is bounded uniformly in $\tau$ on all Sobolev spaces $H^r \times H^{r-1}$, $r \in \R$.
\end{Lemma}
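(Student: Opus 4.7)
The plan is to define $\Psi_\tau$ as an explicit matrix-valued Fourier multiplier and verify its uniform boundedness on $H^r \times H^{r-1}$ by carefully exploiting the frequency cut-off. On each Fourier mode $k \in \Z^3$, the operator $A$ acts as the $2 \times 2$ matrix $A_k = \begin{pmatrix} 0 & 1 \\ -|k|^2 & 0 \end{pmatrix}$, whose eigenvalues are $\pm \iu|k|$, and one computes
\[e^{\tau A_k}-I = \begin{pmatrix} \cos(\tau|k|)-1 & \sin(\tau|k|)/|k| \\ -|k|\sin(\tau|k|) & \cos(\tau|k|)-1 \end{pmatrix},\]
with determinant $2(1-\cos(\tau|k|))$. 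Since $\tau A \Pi_{\tau^{-1}}$ vanishes on modes with $|k|_\infty > 1/\tau$, I set $\Psi_\tau \equiv 0$ there and concentrate on the filter set.

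For $k \neq 0$ with $|k|_\infty \le 1/\tau$, write $\theta := \tau|k| \in (0,\sqrt{3}]$. Since $\sqrt{3} < \pi$, one has $1-\cos\theta > 0$, so $e^{\tau A_k}-I$ is invertible. Using $1-\cos\theta = 2\sin^2(\theta/2)$ and $\sin\theta = 2\sin(\theta/2)\cos(\theta/2)$, a direct computation yields
\[\Psi_{\tau,k} := (e^{\tau A_k}-I)^{-1} \tau A_k = \begin{pmatrix} m(\theta) & -\tau/2 \\ \tau|k|^2/2 & m(\theta) \end{pmatrix},\]
where $m(\theta) := (\theta/2)\cot(\theta/2)$, continuously extended by $m(0) := 1$. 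For $k = 0$ one has $e^{\tau A_0}-I = \tau A_0$, and the same formula $\Psi_{\tau,0} = \begin{pmatrix} 1 & -\tau/2 \\ 0 & 1 \end{pmatrix}$ verifies $(e^{\tau A_0}-I)\Psi_{\tau,0} = \tau A_0$ directly. In operator form, this amounts to
\[\Psi_\tau = \begin{pmatrix} m(\tau|\nabla|) & -\tau/2 \\ -(\tau/2)\Delta & m(\tau|\nabla|) \end{pmatrix}\Pi_{\tau^{-1}},\]
and the identity $\tau A \Pi_{\tau^{-1}} = (e^{\tau A}-I)\Psi_\tau$ then holds on every Fourier mode.

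The remaining task is to check that $\Psi_\tau$ is bounded on $H^r \times H^{r-1}$ uniformly in $\tau \in (0,1]$, entry by entry. The function $m$ is continuous on $[0,\pi)$, hence bounded on $[0,\sqrt{3}]$, so the diagonal entry $m(\tau|\nabla|)\Pi_{\tau^{-1}}$ is a bounded Fourier multiplier on both $H^r$ and $H^{r-1}$ with norm independent of $\tau$. The $(1,2)$ entry $-(\tau/2)\pi_{\tau^{-1}}$, applied to $v \in H^{r-1}$, satisfies by Bernstein's inequality (Lemma \ref{LemBernstein}) the bound $\|(\tau/2)\pi_{\tau^{-1}} v\|_{H^r} \lesssim \tau \cdot \tau^{-1}\|v\|_{H^{r-1}} = \|v\|_{H^{r-1}}$. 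For the $(2,1)$ entry, one factors $\tau|k|^2/2 = (\tau|k|/2) \cdot |k|$, observing that the scalar factor $\tau|k|/2 \le \sqrt{3}/2$ on the filter set, while $|k|$ maps $H^r$ to $H^{r-1}$ with norm $\lesssim 1$.

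The main obstacle is maintaining uniformity in $\tau$ for the off-diagonal entries, which would fail without the filter: it is precisely the cut-off $|k|_\infty \le 1/\tau$ that, via Bernstein, converts the factor $\tau$ into a gain of one derivative and thereby compensates the regularity mismatch between $H^r$ and $H^{r-1}$.
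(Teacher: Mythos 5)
Your proposal is correct and takes essentially the same approach as the paper: you arrive at the identical explicit Fourier-multiplier $\Psi_\tau$ (the paper writes the diagonal as $-\tfrac\tau2\frac{|\nabla|\sin(\tau|\nabla|)}{\cos(\tau|\nabla|)-I}$, which simplifies to your $m(\tau|\nabla|)=(\tau|\nabla|/2)\cot(\tau|\nabla|/2)$), and use the same key point that the filter restricts to $\tau|k|\le\sqrt{3}$ where the relevant scalar function is bounded. You spell out the matrix inversion, the $k=0$ mode, and the off-diagonal Bernstein argument more explicitly than the paper does, but there is no methodological difference.
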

\begin{proof}
	One checks that the equality holds for 
	\[\Psi_\tau \coloneqq -\frac\tau2\Pi_{\tau^{-1}}\begin{pmatrix}
		\dfrac{|\nabla|\sin(\tau|\nabla|)}{\cos(\tau|\nabla|)-I} & I \\[2ex] \Delta & \dfrac{|\nabla|\sin(\tau|\nabla|)}{\cos(\tau|\nabla|)-I}
	\end{pmatrix}. \]
	This operator is uniformly bounded in $\tau$ thanks to the presence of $\Pi_{\tau^{-1}}$, which ensures that we only need to consider the Fourier modes with $\tau|k|\le \sqrt3\tau|k|_\infty \le \sqrt3$. Therefore, we can exploit that the function \[x \mapsto \frac{x\sin x} {\cos x -1}\] is bounded on $[0,\sqrt3]$.
\end{proof}

\section{Nonlinear wave equation}\label{SecNonlinear}

\subsection{Review of local wellposedness theory}
The wellposedness theory for \eqref{NLW2} is well known, therefore we only address the most important points. See, e.g., the monographs \cite{Tao2006,Sogge} for more details.\footnote{We also refer to \cite{PlanchonUniqueness} for the question of ``unconditional uniqueness''.} Note that, thanks to finite propagation speed, the local theory is essentially identical to that of the corresponding problem on the full space $\R^3$. We first reformulate the equation \eqref{NLW2} as a first-order system in time. Using the wave operator $A$ from \eqref{DefA} and the notation
\[g(u) \coloneqq -\mu u^\alpha, \qquad G(u,v) \coloneqq (0,g(u)) \]
for the nonlinearity, one obtains the equivalent system
\begin{equation}
	\begin{aligned} \label{NLW} \partial_{t} U(t) &= AU(t) +G(U(t)), \quad t \in [0,T], \\ U(0)&=(u^0,v^0)
\end{aligned}\end{equation}
for the new variable $U \mathrel{\widehat{=}} (u,\partial_tu)$. The local wellposedness is shown by a classical fixed point argument based on the Duhamel formula
\begin{equation}\label{Duh}
	U(t) = e^{t A }(u^0,v^0)+\int_0^t e^{(t-s)A}G(U(s)) \dd s
\end{equation}
for \eqref{NLW}. In the case $\alpha = 3$, the Sobolev embedding $H^1 \hookrightarrow L^6$ implies that the nonlinearity $G$ leaves the space $H^1 \times L^2$ invariant. Therefore, the fixed point space for $U$ can be chosen as a closed ball in $C([0,b],H^1\times L^2)$ for some $b>0$ small enough. If $\alpha=4$, one needs to involve a Strichartz space for $u$ in the fixed point space. A particular choice that fits well to the the estimate from Corollary \ref{CorDisStrichWave} is the space $L^6([0,b],L^9)$. 
This leads to the following local wellposedness theorem for the nonlinear wave equation \eqref{NLW2}.

\begin{Proposition}\label{ThmLokWoh}
	Let $\alpha<5$ and $R>0$. Then there exists a time $b = b(R)>0$ such that for all $(u^0,v^0) \in H^1 \times L^2$ with $\|(u^0,v^0)\|_{H^1 \times L^2} \le R$, there is a unique function {$U=(u,\partial_tu) \in C([0,b],H^1 \times L^2)$ satisfying \eqref{Duh}.}
	For $\alpha=4$, we moreover get $u \in L^6([0,b],L^9)$ with estimate
	\begin{equation}\label{eq:apriori}
		\|u\|_{L^6_bL^9} \lesssim R.
	\end{equation}
\end{Proposition}

\begin{Remark}\label{RemWellp}
{a) The function $U = (u,\partial_tu)$ from Proposition \ref{ThmLokWoh} solves the semilinear wave equation \eqref{NLW2} and the equivalent formulation \eqref{NLW} in the following sense. Since $g(u)$, $\Delta u \in C([0,b], H^{-1})$, the Duhamel formula \eqref{Duh} implies that $\partial_t^2 u$ belongs to $C([0,b], H^{-1})$ and that the differential equation \eqref{NLW2} holds in this space. Thus, the equation \eqref{NLW} then also holds in $C([0,b],L^2 \times H^{-1})$.} \smallskip
	
b) The local wellposedness theory is slightly different in the case of the critical power $\alpha=5$. More precisely, the a-priori existence time $b>0$ then not only depends on the $H^1 \times L^2$ norm of the initial data $(u^0,v^0)$, but also on the initial data itself. \smallskip

c) If $\alpha \in \{3,5\}$ and $\mu=1$, one can exploit the energy conservation law to show that solutions to \eqref{NLW2} are in fact global in time. This does not work if $\mu =-1$ or $\alpha$ is even, since in those cases the energy might become negative and does not control the $H^1 \times L^2$ norm.  
However, in this work we will not address questions of long-time behavior.
\end{Remark}

From now on we will always assume the existence of a solution on a fixed interval $[0,T]$.

\begin{Assumption}\label{AssNew}
	There exists a time $T>0$ and a solution $U = (u,\partial_tu)$ of the nonlinear equation \eqref{NLW2} with $\alpha \in \{3,4\}$ such that
	{$U \in C([0,T], H^1 \times L^2)$}
	with bound
	\begin{equation}\label{DefMNew}
		M \coloneqq \|U\|_{L^\infty_T( H^1 \times L^2)}.
	\end{equation}
\end{Assumption}

\subsection{Nonlinear estimates}
We derive some important estimates for $u$ from Assumption \ref{AssNew} that will be used later. First, we extend the a-priori estimate \eqref{eq:apriori} to other $H^1$-admissible Strichartz pairs $(p,q)$, to the possibly larger time interval $[0,T]$, and also to a discrete-time estimate.

\begin{Proposition}\label{PropStrichUNew}
	Let $u$, $T$, and $M$ be given by Assumption \ref{AssNew} and let $(p,q)$ be $H^1$-admissible. Then we have the estimates
	\begin{align*}
		\|u\|_{L^p_TL^q} + \|\pi_K u\|_{L^p_TL^q} +\|\pi_{\tau^{-1}} u(n\tau)\|_{\ell^p_{\tau}([0,T],L^q)} &\lesssim_{p,q,M,T} 1,  \\
		\|\pi_K u\|_{L^2_TL^\infty} &\lesssim_{M,T} (1 +\log K)^{\frac12}
	\end{align*}
	for all $\tau \in (0,1]$ and $K \ge 1$. 
\end{Proposition}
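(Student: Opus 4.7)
The plan is to apply the Strichartz estimates of Corollary \ref{CorStrichWave} to the Duhamel representation \eqref{Duh} of $u$. Since $\pi_K$ is a Fourier multiplier and hence commutes with $e^{tA}$, the function $\pi_K u$ solves the linear inhomogeneous wave equation with data $(\pi_K u^0,\pi_K v^0)$ and forcing $\pi_K g(u)$; because $\pi_K$ has norm at most one on every Sobolev space $H^s$, applying Corollary \ref{CorStrichWave} both to $u$ and to $\pi_K u$ reduces all three claimed estimates to a uniform control of $\|g(u)\|_{L^1_TL^2} = \|u\|_{L^\alpha_TL^{2\alpha}}^\alpha$ in terms of $M$ and $T$.

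In the cubic case $\alpha = 3$ this is immediate: the Sobolev embedding $H^1 \hookrightarrow L^6$ together with Assumption \ref{AssNew} yields $\|u\|_{L^\infty_TL^6} \lesssim_M 1$, so that $\|u^3\|_{L^1_TL^2} = \|u\|_{L^3_TL^6}^3 \lesssim_{M,T} 1$, closing the argument at once.

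For $\alpha = 4$ I would first upgrade the local a-priori bound \eqref{eq:apriori} to the whole interval $[0,T]$. Since Assumption \ref{AssNew} keeps $\|U(t)\|_{H^1\times L^2}$ bounded by $M$ throughout $[0,T]$, restarting Theorem \ref{ThmLokWoh} at each of the $O(T/b(M))$ grid points of length $b(M)$ and invoking uniqueness provides the global estimate $\|u\|_{L^6_TL^9} \lesssim_{M,T} 1$. Combining the log-convexity bound $\|u\|_{L^8} \le \|u\|_{L^6}^{1/4}\|u\|_{L^9}^{3/4}$ with Hölder in time then gives
\[ \|u\|_{L^4_TL^8}^4 \le \|u\|_{L^\infty_TL^6}\int_0^T \|u\|_{L^9}^3 \dd t \lesssim T^{\frac12}\,M\,\|u\|_{L^6_TL^9}^3 \lesssim_{M,T} 1, \]
which is exactly the input that Corollary \ref{CorStrichWave} requires in the quartic setting.

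The main obstacle is this quartic case: a straightforward interpolation between $L^\infty_TL^6$ and $L^6_TL^9$ does \emph{not} reach $L^4_TL^8$ (the corresponding interpolation parameter would be $\theta = 3/2$), so one really has to go through the iterated local wellposedness estimate and then pay a Hölder factor in time. It is precisely at this step that the global-in-time $H^1\times L^2$ control of Assumption \ref{AssNew} enters in an essential way; everything else is a routine application of the linear estimates developed in Section \ref{SecStrich}.
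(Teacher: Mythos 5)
Your proposal is correct and follows essentially the same route as the paper: reduce via Corollary \ref{CorStrichWave} to bounding $\|g(u)\|_{L^1_TL^2}$, handle $\alpha=3$ by Sobolev embedding $H^1\hookrightarrow L^6$, and for $\alpha=4$ iterate the local existence bound \eqref{eq:apriori} to get $\|u\|_{L^6_TL^9}\lesssim_{M,T}1$ and then combine it with $\|u\|_{L^\infty_TL^6}$. The only cosmetic difference is that you pass through $L^8$ via spatial log-convexity and then Hölder in time, whereas the paper writes $\|u^4\|_{L^2}\le\|u^3\|_{L^3}\|u\|_{L^6}=\|u\|_{L^9}^3\|u\|_{L^6}$ directly; both calculations produce the identical intermediate bound $\|u\|_{L^\infty_TL^6}\|u\|_{L^3_TL^9}^3\lesssim_T\|u\|_{L^\infty_TL^6}\|u\|_{L^6_TL^9}^3$.
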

\begin{proof}
	We only need to show that
	\begin{equation}\label{eq:g(u)}
	 \|g(u)\|_{L^1_TL^2} \lesssim_{M,T} 1, 
	\end{equation}
	then the result follows from Corollary \ref{CorStrichWave}. In the case $\alpha=3$, the bound \eqref{eq:g(u)} already follows from the Sobolev embedding $H^1 \hookrightarrow L^6$. If $\alpha=4$, we take $b = b(M)$ from {Proposition} \ref{ThmLokWoh}. Let $L \in \N$ be the minimal number such that $jb\ge T$. The bound \eqref{eq:apriori} then implies that
	\[ \|u\|_{L^6_TL^9} \le \Big(\sum_{j=1}^L \int_{(j-1)b}^{jb} \|u(t)\|_{L^9}^6\dd t\Big)^\frac16 \lesssim L^\frac16M \lesssim_{M,T} 1. \] 
	Sobolev and Hölder inequalities now yield that
	\[\|g(u)\|_{L^1_TL^2} \le \|u^3\|_{L^1_TL^3}\|u\|_{L^\infty_TL^6} \lesssim_T \|u\|^3_{L^6_TL^9} \|u\|_{L^\infty_TH^1} \lesssim_{M,T}1, \]
	thus \eqref{eq:g(u)} is also true for $\alpha=4$.
\end{proof}

In the next lemma we give convergence rates for the difference between $g(u)$ and $g(\pi_Ku)$. We will often use the elementary Lipschitz bound
\begin{equation}\label{NichtlinLip}
	|g(v)-g(w)| \lesssim (|v|^{\alpha-1}+|w|^{\alpha-1})|v-w|
\end{equation}
for the nonlinearity $g$.

\begin{Lemma}\label{LemStrichg(u)}
	Let $u$, $T$, and $M$ be given by Assumption \ref{AssNew}. Then we have the estimate
	\begin{align*}
		\|g(u)-g(\pi_Ku)\|_{L^1_TH^{-1}} &\lesssim_{M,T} K^{-1}.
	\end{align*}
	Moreover, if $\alpha=3$,
	\[\|g(u)-g(\pi_Ku)\|_{L^1_TL^2} \lesssim_{M,T} K^{-1}(1+\log K), \]
	and for $\alpha =4$,
	\[\|g(u)-g(\pi_Ku)\|_{L^1_TL^2} \lesssim_{M,T} K^{-\frac12}.\]
	These estimates are uniform in $K \ge 1$.
\end{Lemma}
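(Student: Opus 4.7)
Write $R := u - \pi_K u$. From the Lipschitz bound \eqref{NichtlinLip} we have the pointwise estimate $|g(u) - g(\pi_K u)| \lesssim (|u|^{\alpha-1}+|\pi_K u|^{\alpha-1})|R|$. Throughout I would exploit the decay $\|R\|_{L^\infty_T H^\gamma}\lesssim K^{\gamma-1}$ (for $\gamma\le 1$) from Lemma~\ref{LemPiNew}, the Strichartz bounds for $u$ and $\pi_K u$ in Proposition~\ref{PropStrichUNew}, and standard Sobolev embeddings such as $L^{6/5}\hookrightarrow H^{-1}$ and $H^{1/2}\hookrightarrow L^3$.

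For the $H^{-1}$ estimate, I reduce via the dual Sobolev $L^{6/5}\hookrightarrow H^{-1}$ to an $L^{6/5}_x$ bound and use a single Hölder split. For $\alpha=3$,
\[
\|(|u|^2+|\pi_K u|^2)R\|_{L^{6/5}_x} \le (\|u\|_{L^6}^2+\|\pi_K u\|_{L^6}^2)\|R\|_{L^2},
\]
combined with $\|R\|_{L^\infty_T L^2}\lesssim K^{-1}$ and Sobolev $H^1\hookrightarrow L^6$, gives $K^{-1}$ after trivial time integration. For $\alpha=4$,
\[
\|(|u|^3+|\pi_K u|^3)R\|_{L^{6/5}_x}\le (\|u\|_{L^{18}}^3+\|\pi_K u\|_{L^{18}}^3)\|R\|_{L^{3/2}},
\]
and I would combine the $H^1$-admissible pair $(3,18)$ from Proposition~\ref{PropStrichUNew} with $\|R\|_{L^{3/2}}\lesssim\|R\|_{L^2}\lesssim K^{-1}$, closing with one Hölder in time.

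For the $L^2$ estimate with $\alpha=3$, the key step is $|u|^2\lesssim|\pi_K u|^2+R^2$, which reduces the product to the two pieces $(\pi_K u)^2 R$ and $R^3$. The first is the source of the logarithm and is handled via the $L^2_T L^\infty$ endpoint Strichartz of Proposition~\ref{PropStrichUNew}:
\[
\|(\pi_K u)^2 R\|_{L^1_T L^2}\le\|R\|_{L^\infty_T L^2}\|\pi_K u\|_{L^2_T L^\infty}^2\lesssim K^{-1}(1+\log K).
\]
For the second, $\|R^3\|_{L^1_T L^2}=\|R\|_{L^3_T L^6}^3$, and I would appeal to a Strichartz estimate at the admissible (but not $H^1$-admissible) triple $(3,6,2/3)$: since $R$ solves the wave equation with initial data and forcing of the form $(I-\pi_K)(\cdot)$, Duhamel combined with Corollary~\ref{CorKontStrich} gives
\[
\|R\|_{L^3_T L^6}\lesssim\|(I-\pi_K)u^0\|_{H^{2/3}}+\|(I-\pi_K)v^0\|_{H^{-1/3}}+\|(I-\pi_K)g(u)\|_{L^1_T H^{-1/3}}\lesssim K^{-1/3}
\]
via Lemma~\ref{LemPiNew} (which turns the $1/3$ regularity gap into $K^{-1/3}$) and Proposition~\ref{PropStrichUNew}; cubing yields the desired $K^{-1}$. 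The usual zero-frequency issue from $|\nabla|^{-1}$ in the sine-kernel vanishes here since $I-\pi_K$ annihilates constants for $K\ge 1$.

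For $\alpha=4$ in $L^2$, no splitting is needed: a direct Hölder
\[
\|(|u|^3+|\pi_K u|^3)R\|_{L^2_x}\le(\|u\|_{L^{18}}^3+\|\pi_K u\|_{L^{18}}^3)\|R\|_{L^3},
\]
together with the $H^1$-admissible pair $(3,18)$ for the first factors and $\|R\|_{L^\infty_T L^3}\lesssim K^{-1/2}$ (from Lemma~\ref{LemPiNew} and $H^{1/2}\hookrightarrow L^3$), delivers $K^{-1/2}$ after a final Hölder in time. The main obstacle is the $R^3$ term in the cubic $L^2$ bound: it is the one place where I have to leave the $H^1$-admissible framework of Corollary~\ref{CorStrichWave} and invoke a lower-regularity Strichartz estimate at $\gamma=2/3$, which is not stated explicitly in the paper but follows readily from Corollary~\ref{CorKontStrich} via Duhamel.
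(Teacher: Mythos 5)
Your proposal is correct, and for the two quartic bounds and the general $H^{-1}$ bound it essentially coincides with the paper's proof (only with slightly different Hölder splittings, e.g.\ $L^{18}\times L^{3/2}$ instead of $L^9\times L^2$ for the $H^{-1}$ estimate when $\alpha=4$). The genuine difference is in the cubic $L^2$ bound. The paper introduces an intermediate cut-off $\pi_{K^2}$ and splits
$g(u)-g(\pi_Ku)=\big(g(u)-g(\pi_{K^2}u)\big)+\big(g(\pi_{K^2}u)-g(\pi_Ku)\big)$:
the first piece is handled with the $H^1$-admissible pair $(4,12)$ and $H^{1/2}\hookrightarrow L^3$ to pick up $K^{-1}$ from the faster decay at the squared scale, and the second piece is handled with the $L^2_TL^\infty$ endpoint to get $K^{-1}(1+\log K)$. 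This keeps the whole argument inside the $H^1$-admissible framework of Corollary~\ref{CorStrichWave}. You instead use the pointwise bound $|u|^2\lesssim|\pi_Ku|^2+R^2$ and split into $(\pi_Ku)^2R$ (handled identically via the endpoint) and $R^3$, for which you go outside the $H^1$-admissible framework and invoke the admissible triple $(3,6,2/3)$. That estimate is not stated as a wave Strichartz bound in the paper (Corollary~\ref{CorStrichWave} is formulated only for $\gamma=1$), but it does follow from Corollary~\ref{CorKontStrich} via the same Duhamel computation as in the proof of Corollary~\ref{CorStrichWave}, and your observation that $I-\pi_K$ annihilates constants removes the usual low-frequency obstruction for $|\nabla|^{-1}$. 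Your route is a little shorter and avoids the somewhat ad hoc scale $K^2$, at the price of needing a lower-regularity Strichartz pair that has to be extracted from Corollary~\ref{CorKontStrich}; the paper's route is slightly longer but stays entirely within results it has already formulated.

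Two small points to tighten if you write this up: (i) state explicitly the $\gamma=2/3$ variant of Corollary~\ref{CorStrichWave} as a one-line lemma, since it is used in a nontrivial way; (ii) in the $H^{-1}$ bound for $\alpha=4$, you need $(3,18)$ in $L^1_T$-norm, which requires a Hölder in time from $L^1_T$ to $L^3_T$ on the cube $|u|^3$ before applying Proposition~\ref{PropStrichUNew} — make sure the time exponents line up as $\frac11=\frac13+\frac{2}{3}$ with the second factor in $L^{3/2}_T$ (or simply put $R$ in $L^\infty_T$ as the paper does, which is cleaner).
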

\begin{proof}
	In the general case, we estimate
	\begin{align*}
		&\|g(u)-g(\pi_Ku)\|_{L^1_TH^{-1}} \\
		&\lesssim \|(|u|^{\alpha-1}+|\pi_Ku|^{\alpha-1})(I-\pi_K)u\|_{L^1_TL^\frac{6}{5}} \\
		&\lesssim  (\|u\|^{\alpha-1}_{L^{\alpha-1}_TL^{3(\alpha-1)}}+\|\pi_Ku\|^{\alpha-1}_{L^{\alpha-1}_TL^{3(\alpha-1)}}) \|(I-\pi_K)u\|_{L^\infty_T L^2} \\
		&\lesssim_{M,T} K^{-1}\|u\|_{L^\infty_T H^1} \lesssim_M K^{-1},
	\end{align*}
	using the Sobolev embedding $L^\frac65 \hookrightarrow H^{-1}$, estimate \eqref{NichtlinLip}, Hölder's inequality, Proposition \ref{PropStrichUNew}, and Lemma \ref{LemPiNew}. 
	Similarly, for $\alpha=4$,
	\begin{align*}
		\|g(u)-g(\pi_Ku)\|_{L^1_TL^2} &\lesssim \|(|u|^3+|\pi_Ku|^3)(I-\pi_K)u\|_{L^1_TL^2} \\
		&\lesssim  \|(|u|^3+|\pi_Ku|^3)\|_{L^1_TL^6}\|(I-\pi_K)u\|_{L^\infty_T L^3} \\
		&\lesssim  (\|u\|^3_{L^3_TL^{18}}+\|\pi_Ku\|^3_{L^3_TL^{18}})\|(I-\pi_K)u\|_{L^\infty_T H^{\frac12}} \\
		&\lesssim_{M,T} K^{-\frac12}\|u\|_{L^\infty_T H^1} \lesssim_M K^{-\frac12},
	\end{align*}
	where the Sobolev embedding $H^\frac12 \hookrightarrow L^3$ and Proposition \ref{PropStrichUNew} with $(p,q) = (3,18)$ were used.
	Let now $\alpha=3$. Then we decompose
	\[ \|g(u)-g(\pi_Ku)\|_{L^1_TL^2} \le \|g(u)-g(\pi_{K^2}u)\|_{L^1_TL^2} + \|g(\pi_{K^2}u)-g(\pi_Ku)\|_{L^1_TL^2}. \]
	We obtain similar as before
	\begin{align*}
		\|g(u)-g(\pi_{K^2}u)\|_{L^1_TL^2} &\lesssim \|(|u|^2+|\pi_{K^2}u|^2)(I-\pi_{K^2})u\|_{L^1_TL^2} \\
		&\lesssim  \|(|u|^2+|\pi_{K^2}u|^2)\|_{L^1_TL^6}\|(I-\pi_{K^2})u\|_{L^\infty_T L^3} \\
		&\lesssim_T  (\|u\|^2_{L^4_TL^{12}}+\|\pi_{K^2}u\|^2_{L^4_TL^{12}})\|(I-\pi_{K^2})u\|_{L^\infty_T H^\frac12} \\
		&\lesssim_{M,T} K^{-1} \|u\|_{L^\infty_T H^1} \lesssim_M K^{-1},
	\end{align*}
	using $(p,q)=(4,12)$, and
	\begin{align*}
		&\|g(\pi_{K^2}u)-g(\pi_Ku)\|_{L^1_TL^2} \\
		&\lesssim \|(|\pi_{K^2}u|^2+|\pi_Ku|^2)\pi_{K^2}(I-\pi_K)u\|_{L^1_TL^2} \\
		&\lesssim  \|(|\pi_{K^2}u|^2+|\pi_Ku|^2)\|_{L^1_TL^\infty}\|\pi_{K^2}(I-\pi_K)u\|_{L^\infty_T L^2} \\
		&\lesssim  (\|\pi_{K^2}u\|^2_{L^2_TL^\infty}+\|\pi_Ku\|^2_{L^2_TL^\infty})\|(I-\pi_K)u\|_{L^\infty_T L^2} \\
		&\lesssim_{M,T} K^{-1}(1+\log K) \|u\|_{L^\infty_T H^1} \lesssim_M K^{-1}(1+\log K).
	\end{align*}
	Here, the logarithmic estimate for the $L^2_TL^\infty$ norm from Proposition \ref{PropStrichUNew} was applied.
\end{proof}

\section{Error analysis of the semi-discretization in time}\label{SecErrorAnalysis}

We now start with the error analysis of the splitting scheme \eqref{Strang}. Note that since $G(u,v) = (0,g(u))$ we have $G(\Pi_{\tau^{-1}}U_{n+1/2}) = G(\Pi_{\tau^{-1}}U_{n+1})$, thus we can also state the scheme in the more compact form
\begin{equation}\label{StrangVar}\begin{aligned}
		U_{n+1} &= e^{\tau A}U_n + \frac{\tau}{2}\Big(e^{\tau A} G(\Pi_{\tau^{-1}}U_n) + G(\Pi_{\tau^{-1}}U_{n+1})\Big).
	\end{aligned}
\end{equation} 
In view of a later iteration argument, we allow here for general initial values $U_0 \neq U(0)$. We often denote the discrete times by $t_n \coloneqq n\tau$. 

\subsection{Error recursion}
We first establish a useful decomposition of the error. 

\begin{Proposition}\label{PropFehlerRekStrangNew}
Let $U=(u,\partial_tu)$ be given from Assumption \ref{AssNew} and $U_n$ be given by \eqref{StrangVar}. Define the error $E_n$ by \begin{equation}\label{DefErrorStrang}
	E_n \coloneqq U(t_n) - U_n.
\end{equation}
We then have
\begin{align}\label{FehlerRekStrangNew}
	E_{n} &= e^{n\tau A}E_0 +  B(n\tau) +  D_n + Q_n
\end{align}
for all $\tau \in (0,1]$, and $n \in \N_0$ with $t_{n} \le T$. The appearing terms are given by 
\begin{align}\label{DefTermeStrangNew}
	B(t) &\coloneqq \int_0^{t} e^{(t-s)A}[G(U(s))-G(\Pi_{\tau^{-1}} U(s))] \dd s,\nonumber\\
	D_n &\coloneqq \tfrac{\tau^2}{2}\int_0^{t_n} e^{(n\tau-s)A}(\lfloor\tfrac s\tau\rfloor-\tfrac s\tau)(\lceil\tfrac s\tau\rceil-\tfrac s\tau)\begin{pmatrix}d_1(s) \\ d_2(s)+d_3(s)+d_4(s)\end{pmatrix} \dd s,\\
	Q_n &\coloneqq \tau\sum_{k=0}^{n} e^{(n-k)\tau A} {c_{k,n}}[G(\Pi_{\tau^{-1}} U(t_k))-G(\Pi_{\tau^{-1}}U_k)] , \nonumber
\end{align}
and
\begin{equation}\label{DefdStrangNew}
\begin{aligned}
	d_1(t) &\coloneqq -2g'(\pi_{\tau^{-1}}u(t))\pi_{\tau^{-1}}\partial_tu(t), \\ 
	d_2(t) &\coloneqq g''(\pi_{\tau^{-1}}u(t))\big[|\nabla \pi_{\tau^{-1}}u(t)|^2 + (\pi_{\tau^{-1}}\partial_tu(t))^2\big], \\
	d_3(t) &\coloneqq g'(\pi_{\tau^{-1}}u(t))\pi_{\tau^{-1}}g(u(t)), \\
	d_4(t) &\coloneqq 2g'(\pi_{\tau^{-1}}u(t))\pi_{\tau^{-1}}\Delta u(t).
\end{aligned}
\end{equation}
{Here we define 
	\begin{equation*}\label{Defcnk}
		c_{0,0} \coloneqq 0, \quad c_{0,n}= c_{n,n} \coloneqq \frac12,\quad c_{k,n} \coloneqq 1
	\end{equation*}
	for $k \in \{1,\dots,n-1\}$ and $n \ge 1$.}
We can alternatively write
\begin{equation}\label{eq:StrangDAlt}
	D_n = \tfrac{\tau^2}{2} \int_0^\tau \tfrac s \tau (\tfrac s \tau -1) \sum_{k=0}^{n-1} e^{((n-k)\tau-s) A} \begin{pmatrix}d_1 \\ d_2+d_3+d_4\end{pmatrix}(t_k+s) \dd s.
\end{equation}
\end{Proposition}
\begin{proof}
A straightforward induction based on \eqref{StrangVar} shows that the numerical solution satisfies the discrete Duhamel formula
\begin{equation}\label{DiskrDuhamelStrang}
	U_{n} = e^{n\tau A}U_0 + \tau \sum_{k=0}^{n} {c_{k,n}} e^{(n-k)\tau A}G(\Pi_{\tau^{-1}}U_{k}).
\end{equation}
We subtract it from its continuous analogue (see \eqref{Duh})
\begin{equation*}
	U(n\tau) = e^{n\tau A}U(0)+\int_0^{n\tau}e^{(n\tau-s)A}G(U(s)) \dd s
\end{equation*} 
to obtain
\begin{align*}
	E_{n} &= e^{n\tau A}E_0 +  B(n\tau) + \int_0^{t_n}e^{(n\tau-s)A} G(\Pi_{\tau^{-1}}U(s)) \dd s \\ 
	&\quad - \tau \sum_{k=0}^{n} {c_{k,n}} e^{(n-k)\tau A}G(\Pi_{\tau^{-1}}U(t_k)) + Q_n. 
\end{align*}
To get the desired formula for $D_n$, we use the error representation of the trapezoidal sum
\[\int_0^{t_n} F(s) \dd s - \tau\sum_{k=0}^{n}{c_{k,n}}F(t_k) = \frac12\sum_{k=0}^{n-1}\int_{t_k}^{t_{k+1}}(s-t_k)(s-t_{k+1})F''(s) \dd s, \]
where we set $F(s) \coloneqq e^{(n\tau-s)A}G(\Pi_{\tau^{-1}}U(s))$. We compute 
\begin{align*}
	F'(s) &=  e^{(n\tau-s) A}\Bigg[-\begin{pmatrix}0 & I \\ \Delta & 0 \end{pmatrix}\begin{pmatrix}0 \\ g(\pi_{\tau^{-1}} u(s))\end{pmatrix}+\frac{\mathrm{d}}{\mathrm{d}s}\begin{pmatrix}0 \\ g(\pi_{\tau^{-1}} u(s))\end{pmatrix}\Bigg] \\
	&=  e^{(n\tau-s) A}\begin{pmatrix}-g(\pi_{\tau^{-1}} u(s)) \\ g'(\pi_{\tau^{-1}} u(s))\pi_{\tau^{-1}} \partial_t u(s)\end{pmatrix}
\end{align*}
and
\begin{align*}
F''(s) &=  e^{(n\tau-s) A}\Bigg[-\begin{pmatrix}0 & I \\ \Delta & 0 \end{pmatrix}\begin{pmatrix}-g(\pi_{\tau^{-1}} u(s)) \\ g'(\pi_{\tau^{-1}} u(s))\pi_{\tau^{-1}} \partial_t u(s)\end{pmatrix} \\
&\qquad \qquad \quad \  +\frac{\mathrm{d}}{\mathrm{d}s}\begin{pmatrix}-g(\pi_{\tau^{-1}} u(s)) \\ g'(\pi_{\tau^{-1}} u(s))\pi_{\tau^{-1}} \partial_t u(s)\end{pmatrix}\Bigg] \\
&= e^{(n\tau-s) A}\begin{pmatrix}d_1(s)\\d_2(s)+d_3(s)+d_4(s)\end{pmatrix}, 
\end{align*}
using that $\Delta[g(w)] = g''(w)|\nabla w|^2 + g'(w) \Delta w$ and the differential equation \eqref{NLW2}. Since 
\begin{align*}
&\frac12\sum_{k=0}^{n-1}\int_{t_k}^{t_{k+1}}(s-t_k)(s-t_{k+1})F''(s) \dd s \\
&= \tfrac{\tau^2}{2}\int_{0}^{t_n}(\lfloor\tfrac s\tau\rfloor-\tfrac s\tau)(\lceil\tfrac s\tau\rceil-\tfrac s\tau)F''(s) \dd s = D_n, 
\end{align*}
we can conclude that formula \eqref{FehlerRekStrangNew} is true. The substitution $\tilde s = s - t_k$ yields the alternative representation \eqref{eq:StrangDAlt}.
\end{proof}

\begin{Remark}\label{Rem:en}
	{The first component of $ Q_n$ defined by \eqref{DefTermeStrangNew} satisfies
	\[ [ Q_n]_1 = \tau\sum_{k=0}^{n-1} \Big[e^{(n-k)\tau A} c_{k,n}[G(\Pi_{\tau^{-1}} U(t_k))-G(\Pi_{\tau^{-1}}U_k)]\Big]_1. \]
	Here, the notation $[\cdot]_1$ means that we take the first component of the vector.
	The $n$-th term in the sum vanishes since the first component of the nonlinearity $G$ is zero.}
\end{Remark}

\subsection{Estimates for error terms resulting from the filter}

We now deal with the term $B$ in \eqref{FehlerRekStrangNew} that results from the introduction of the filter function $\Pi_{\tau^{-1}}$. Here we face the following difficulty. If we move the $L^2 \times H^{-1}$ norm inside the integral and apply Lemma \ref{LemPiNew}, we end up with a term roughly of the form
\[ \tau^2 \|g'(u)\Delta u\|_{L^1_TH^{-1}}. \]
Now we would like to use a nonlinear product estimate, but we do not have enough regularity available to obtain an optimal error bound. For example, consider $\alpha=3$ so that $g'(u) \approx u^2$. From Assumption \ref{AssNew} we get $u \in L^\infty_TH^1$ and $\Delta u \in L^\infty_T H^{-1}$. Moreover, thanks to Proposition \ref{PropStrichUNew} we can exploit that almost $u \in L^2_TL^\infty$. But a product estimate of the form $\|vw\|_{H^{-1}} \lesssim \|v\|_{H^1 \cap L^\infty}\|w\|_{H^{-1}}$ is wrong because in 3D, in general one only has \[\|vw\|_{H^{-1}} \lesssim \|v\|_{W^{1,3} \cap L^\infty}\|w\|_{H^{-1}},\]
which would require additional integrability. 

To solve this problem, we follow a different strategy. We do not move the $L^2 \times H^{-1}$ norm into the integral at first. Instead, we involve integration by parts in time, which helps to ``move regularity to the right position''. 
This technique was used previously in, e.g., \cite{Averaged} in a context without Strichartz estimates.

\begin{Remark}\label{RemIBPSpace}
	An alternative approach to the above problem is based on the use of integration by parts in space instead of time. {We do not pursue this approach here since it would be less flexible in view of possible generalizations to non-periodic boundary conditions or $L^\infty$-coefficients in the nonlinear part as in \cite{Averaged}. }
\end{Remark}

\begin{Lemma}\label{LemBPI}
Let $U = (u,\partial_tu)$, $T$, and $M$ be given by Assumption \ref{AssNew}. Let $B$ be given by \eqref{DefTermeStrangNew}. We then have the following estimates. If $\alpha=3$,
\begin{align*} 
\|B(t)\|_{H^1 \times L^2} &\lesssim_{M,T} \tau(1+|\log\tau|), \\
\|B(t)\|_{L^2 \times H^{-1}} &\lesssim_{M,T} \tau^2(1+|\log\tau|),  
\end{align*}
and for $\alpha=4$,
\begin{align*} 
	\|B(t)\|_{H^1 \times L^2} &\lesssim_{M,T} \tau^\frac12, \\
	\|B(t)\|_{L^2 \times H^{-1}} &\lesssim_{M,T} \tau^\frac32,
\end{align*}
uniformly in $\tau \in (0,1]$ and $t \in [0,T]$.
\end{Lemma}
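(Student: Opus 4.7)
For the bound in $H^1 \times L^2$, I would apply the energy estimate from Corollary \ref{CorStrichWave} to the Duhamel integral defining $B(t)$, which immediately reduces the bound to
\[\|B(t)\|_{H^1 \times L^2} \le \|g(u) - g(\pi_{\tau^{-1}}u)\|_{L^1_T L^2}.\]
The right-hand side is exactly what Lemma \ref{LemStrichg(u)} estimates, giving $\tau(1+|\log\tau|)$ for $\alpha = 3$ and $\tau^{1/2}$ for $\alpha = 4$. This part is quick.

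The sharper $L^2 \times H^{-1}$ bound requires an extra factor of $\tau$, and as the paper emphasizes, a direct approach of moving the norm inside the integral would leave us with the ill-behaved product $\|g'(u)\Delta u\|_{L^1_T H^{-1}}$, which is not controllable under the 3D finite-energy hypothesis because $H^1 \cap L^\infty$ functions are not multipliers on $H^{-1}$. The key idea is to integrate by parts in $s$: using $\partial_s e^{(t-s)A} = -Ae^{(t-s)A}$ and inverting $A$ on the orthogonal complement of the zero Fourier mode (where $A^{-1}\colon L^2 \times H^{-1}\to H^1\times L^2$ is bounded), one obtains
\[B(t) = -A^{-1}(0,h(t)) + A^{-1} e^{tA}(0,h(0)) + A^{-1}\int_0^t e^{(t-s)A}(0,h'(s))\,ds,\]
with $h(s) = g(u(s)) - g(\pi_{\tau^{-1}}u(s))$. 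Because $A^{-1}$ effectively gains one derivative, the resulting terms can be controlled in the weaker norm $L^2 \times H^{-1}$ rather than $H^1 \times L^2$, and the dual Strichartz estimate from Corollary \ref{CorStrichDualA} becomes available for the remaining integral term.

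To extract the sharp extra factor, I would Taylor-expand $h = g'(\pi_{\tau^{-1}}u)(I-\pi_{\tau^{-1}}) u + r$ with $r$ at least quadratic in $\tilde w = (I-\pi_{\tau^{-1}})u$. The remainder $r$ carries multiple $(I-\pi_{\tau^{-1}})$-factors, each providing a fractional $\tau$-gain in a suitable Lebesgue space via Lemma \ref{LemPiNew}, and is handled by plain Hölder estimates. For the linear part and for $h'(s) = [g'(u) - g'(\pi_{\tau^{-1}}u)]\partial_t u + g'(\pi_{\tau^{-1}}u)(I-\pi_{\tau^{-1}})\partial_t u$ the gain again comes from a single $(I-\pi_{\tau^{-1}})$-factor, with the remaining factors controlled by Hölder together with the time-integrated Strichartz norms from Proposition \ref{PropStrichUNew}, in particular the logarithmic endpoint bound on $\|\pi_{\tau^{-1}}u\|_{L^2_T L^\infty}$, which produces the $(1+|\log\tau|)$-factor in the cubic case. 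The zero Fourier mode, on which $A^{-1}$ is singular, is treated separately via its explicit form $\int_0^t (t-s)\hat h_0(s)\,ds$, combined with Taylor-expansion estimates on $\|\hat h_0\|_{L^1_T}$. The main obstacle is that each individual boundary or integral term only produces a $\tau$-bound if estimated naively pointwise in time; the additional $\tau$ must be squeezed out from the time-averaged Strichartz norms combined with the integration-by-parts structure, carefully balanced so as to avoid the unavailable 3D product estimate sending $H^1 \cap L^\infty \times H^{-1}$ into $H^{-1}$.
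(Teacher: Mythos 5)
Your argument for the $H^1 \times L^2$ bound is exactly the paper's: reduce to $\|g(u)-g(\pi_{\tau^{-1}}u)\|_{L^1_T L^2}$ via the energy estimate from Corollary \ref{CorStrichWave} and apply Lemma \ref{LemStrichg(u)}.

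For the $L^2 \times H^{-1}$ bound, however, your integration-by-parts strategy has a genuine gap in the boundary terms. Writing $h(s) = g(u(s))-g(\pi_{\tau^{-1}}u(s))$ and inverting $A$ on the exponential produces boundary contributions of the form $A^{-1}(0,h(t))$ and $A^{-1}e^{tA}(0,h(0))$. Since $A^{-1}(0,h) = (\Delta^{-1}h,0)$, their $L^2\times H^{-1}$ norm is $\|\Delta^{-1}h\|_{L^2}$, i.e.\ essentially $\|h\|_{H^{-2}}$ (modulo the zero mode). To reach the claimed rate you would need $\|h(t)\|_{H^{-2}} \lesssim \tau^{(7-\alpha)/2}$ uniformly in $t$, but this is unavailable: a fixed-time estimate starting from $u(t)\in H^1$ and the Lipschitz bound \eqref{NichtlinLip} yields only $\|h(t)\|_{L^1} \lesssim \tau$ (hence only $O(\tau)$ in $H^{-2}$). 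Trying to extract the full $\tau^{(7-\alpha)/2}$ from $\|(I-\pi_{\tau^{-1}})u(t)\|_{H^{\gamma}}$ forces $\gamma \le (\alpha-5)/2$, and you are then stuck with the product estimate $\|g'(\xi)v\|_{H^{-2}} \lesssim \|\xi\|_{H^1}^{\alpha-1}\|v\|_{H^{-1}}$, which fails in 3D for exactly the reason the paper flags in $H^{-1}$ — multiplication by $H^1\cap L^\infty$ functions does not act on negative Sobolev spaces. Crucially, these boundary terms are evaluated at the single times $s=0$ and $s=t$, so the time-averaged Strichartz norms you invoke to ``squeeze out'' the extra power of $\tau$ simply cannot see them; the proposal's acknowledgement that each boundary term is ``only $O(\tau)$ pointwise'' is therefore not a difficulty to be overcome but the fatal flaw of this ordering of steps.

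The paper avoids this by reversing the order of operations. After splitting off an easy piece with an intermediate cutoff $\Pi_{\tau^{-2}}$, it first extracts the full $\tau^{(7-\alpha)/2}$ factor from $(I-\Pi_{\tau^{-1}}) = (\tau\mathcal{D}\Phi_\tau)^{(7-\alpha)/2}$, then rewrites $\mathcal{D}=JA$ and substitutes the PDE $AU = \partial_tU - G(U)$, and only at that stage integrates by parts — and only in the $\partial_t U$ contribution, keeping $e^{(t-s)A}$ intact rather than inverting $A$. The resulting boundary terms then already carry the $\tau^{(7-\alpha)/2}$ prefactor and the remaining factor $\mathcal{D}^{(5-\alpha)/2}\widetilde\Phi_\tau U(s)$ sits at a mild Sobolev level that admits Sobolev--H\"older control at fixed time. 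This reordering is not cosmetic: it is precisely what keeps the pointwise-in-time pieces at $O(1)$ so that the $\tau$ power is never demanded from a single-time estimate. If you want to save your route, you would have to find a way to place the boundary evaluation under the time integral before applying $A^{-1}$ — which is, in effect, what the paper's version of integration by parts does.
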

\begin{proof}
Since
\[
	\|B(t)\|_{H^1 \times L^2} \lesssim_T \|g(u)-g(\pi_{\tau^{-1}}u)\|_{L^1_TL^2},
\]
the bounds for the energy norm follow directly from Lemma \ref{LemStrichg(u)} with $K = \tau^{-1}$.
For the estimates in the weaker $L^2 \times H^{-1}$-norm, we use a decomposition. We first split\footnote{Similar as in the proof of Lemma \ref{LemStrichg(u)}, this first decomposition is in principle only necessary for $\alpha=3$.}
\begin{align*}
	B(t) &= \int_0^{t} e^{(t-s)A}[G(U(s))-G(\Pi_{\tau^{-2}} U(s)) ]\dd s \\
	&\quad + \int_0^{t} e^{(t-s)A}[G(\Pi_{\tau^{-2}}U(s))-G(\Pi_{\tau^{-1}} U(s))] \dd s \\
	& \eqqcolon I_1+I_2. 
\end{align*}
Using again Lemma \ref{LemStrichg(u)},
\begin{equation*}
	\|I_1\|_{L^2\times H^{-1}} \lesssim_T \|g(u)-g(\pi_{\tau^{-2}}u)\|_{L^1_TH^{-1}} \lesssim_{M,T} \tau^2.
\end{equation*}
For the second term, we write
\begin{align*}
G(\Pi_{\tau^{-2}}U(s))-G(\Pi_{\tau^{-1}} U(s)) = \int_0^1  G'(U_{\tau,\theta}(s))\Pi_{\tau^{-2}}(I-\Pi_{\tau^{-1}})U(s) \dd \theta.
\end{align*}
Here we use the notation
\[
 U_{\tau,\theta}(s) \coloneqq \theta\pi_{\tau^{-2}}U(s) + (1-\theta)\pi_{\tau^{-1}}U(s)
\]
and write $u_{\tau,\theta}$ for the first component of $U_{\tau,\theta}$ so that
\[
G'(U_{\tau,\theta}(s)) =  \begin{pmatrix}
	0&0 \\ g'(u_{\tau,\theta}(s))&0
\end{pmatrix}.
\]
Moreover, in order to gain a power of $\tau$, we insert the equality
\begin{align*}
	(I-\Pi_{\tau^{-1}}) = (\tau\mathcal D\Phi_\tau)^{\frac{7-\alpha}{2}},
\end{align*}
where $\mathcal D \coloneqq \operatorname{diag}(|\nabla|,|\nabla|)$, and $\Phi_\tau \coloneqq\operatorname{diag}(\phi_{\tau^{-1}},\phi_{\tau^{-1}})$ is given by Lemma \ref{LemPiNew}. This leads to the representation
\begin{align*}
	I_2 = \tau^{\frac{7-\alpha}{2}}\int_0^1 \int_0^{t}e^{(t-s)A} G'(U_{\tau,\theta}(s)) \Pi_{\tau^{-2}}\Phi_\tau\mathcal D^{\frac{7-\alpha}{2}}U(s) \dd s \dd \theta.
\end{align*}
Next we observe that $\mathcal D = JA$ for the operator
\[J \coloneqq \begin{pmatrix}
	0&-|\nabla|^{-1} \\ |\nabla|&0
\end{pmatrix}. \]
Here we define the zero-th Fourier coefficient of $|\nabla|^{-1}f$ to be zero, for arbitrary functions $f \in H^r$.
To simplify notation, we set $\widetilde \Phi_\tau \coloneqq \Pi_{\tau^{-2}}\Phi_\tau J$, which is a bounded operator on $H^r \times H^{r-1}$ for all $r \in \R$, uniformly in $\tau \in [0,1)$. Altogether we derive
\begin{align*}
	I_2 = \tau^{\frac{7-\alpha}{2}}\int_0^1 \int_0^{t}e^{(t-s)A} G'(U_{\tau,\theta}(s)) \widetilde \Phi_\tau\mathcal D^{\frac{5-\alpha}{2}}AU(s) \dd s \dd \theta.
\end{align*}
Using the differential equation $AU = \partial_{t} U - G(U)$ from \eqref{NLW} that holds in $C([0,T],L^2 \times H^{-1})$, we split this term again into
\begin{align*}
	I_2 &= \tau^{\frac{7-\alpha}{2}} \int_0^1 \int_0^{t}e^{(t-s)A} G'(U_{\tau,\theta}(s))\widetilde \Phi_\tau\mathcal D^{\frac{5-\alpha}{2}}\partial_{t} U(s)  \dd s \dd \theta \\
	&\quad - \tau^{\frac{7-\alpha}{2}} \int_0^1 \int_0^{t}e^{(t-s)A}G'(U_{\tau,\theta}(s)) \widetilde \Phi_\tau\mathcal D^{\frac{5-\alpha}{2}}G(U(s)) \dd s \dd \theta \\
	&\eqqcolon \tau^{\frac{7-\alpha}{2}}(I_{2,1}-I_{2,2}).
\end{align*}
The term involving $G(U)$ is estimated using Hölder's inequality with $\frac56 = \frac13 + \frac12$ and the Sobolev embedding $L^{\frac6\alpha} \hookrightarrow H^{\frac{3-\alpha}{2}}$ by
\begin{align*}
	&\|I_{2,2}\|_{L^2 \times H^{-1}} \lesssim_T \sup_{\theta \in [0,1]} \|g'(u_{\tau,\theta}) [\widetilde \Phi_\tau\mathcal D^{\frac{5-\alpha}{2}}G(U)]_1\|_{L^1_TL^{\frac65}} \\
	&\le \sup_{\theta \in [0,1]} \|g'(u_{\tau,\theta})\|_{L^1_TL^3}\| |\nabla|^{\frac{5-\alpha}{2}}g(u)\|_{L^\infty_TH^{-1}} \\
	&\lesssim \||\pi_{\tau^{-2}}u|^{\alpha-1}+|\pi_{\tau^{-1}}u|^{\alpha-1}\|_{L^1_T L^3} \|u^\alpha\|_{L^\infty_TL^{\frac6\alpha}} \\
	&\lesssim \Big(\|\pi_{\tau^{-2}}u\|^{\alpha-1}_{L^{\alpha-1}L^{3(\alpha-1)}}+\|\pi_{\tau^{-1}}u\|^{\alpha-1}_{L^{\alpha-1}L^{3(\alpha-1)}}\Big)\|u\|_{L^\infty_TL^6}^\alpha \\
	&\lesssim_{M,T} 1,
\end{align*} 
where the estimate in the last line follows from Lemma \ref{LemStrichg(u)}. Here, the notation $[\cdot]_1$ means that we take the first component of the vector.

The term involving $\partial_{t}U$ is integrated by parts in time, which gives
\begin{align*}
	I_{2,1} &= \int_0^1 \Big[e^{(t-s)A} G'(U_{\tau,\theta}(s)) \widetilde \Phi_\tau\mathcal D^{\frac{5-\alpha}{2}}U(s)\Big]_{s=0}^{t} \dd \theta \\
	&\quad + \int_0^1\int_0^{t}Ae^{(t-s)A} G'(U_{\tau,\theta}(s)) \widetilde \Phi_\tau\mathcal D^{\frac{5-\alpha}{2}}U(s) \dd s \dd \theta\\
	&\quad - \int_0^1 \int_0^{t}e^{(t-s)A} \frac{\mathrm{d}}{\mathrm{d}s} G'(U_{\tau,\theta}(s)) \widetilde \Phi_\tau\mathcal D^{\frac{5-\alpha}{2}}U(s) \dd s \dd \theta \\
	&\eqqcolon I_{2,1,1}+I_{2,1,2}-I_{2,1,3}.
\end{align*}
The boundary terms $I_{2,1,1}$ can be estimated only using Sobolev and Hölder inequalities. For $\theta \in [0,1]$ and $s \in \{0,t\}$, we get
\begin{align*}
	\|I_{2,1,1}\|_{L^2\times H^{-1}} &\lesssim_T \|g'( u_{\tau,\theta}(s))[\mathcal D^{\frac{5-\alpha}{2}}\widetilde \Phi_\tau U(s)]_1\|_{L^{\frac{6}{5}}} \\
	&\le \|g'( u_{\tau,\theta}(s))\|_{L^{\frac{6}{\alpha-1}}} \|[\mathcal D^{\frac{5-\alpha}{2}}\widetilde \Phi_\tau U(s)]_1\|_{L^{\frac{6}{6-\alpha}}} \\
	&\lesssim \Big(\|\pi_{\tau^{-2}}u(s)\|_{L^6}^{\alpha-1}+\|\pi_{\tau^{-1}}u(s)\|_{L^6}^{\alpha-1}\Big)\|U(s)\|_{H^1\times L^2} \lesssim_M 1,
\end{align*} 
using $H^{\frac{\alpha-3}{2}} \hookrightarrow L^{\frac{6}{6-\alpha}}$.
The term involving $A$ is estimated by
\begin{align}\label{I232}
	\|I_{2,1,2}\|_{L^2 \times H^{-1}}	&\lesssim_T \|g'(u_{\tau,\theta})[\mathcal D^{\frac{5-\alpha}{2}}\widetilde \Phi_\tau U]_1 \|_{L^1_TL^2} \nonumber\\
	&\le \|g'(u_{\tau,\theta})\|_{L^1_TL^{\frac{6}{\alpha-3}}}\|[\mathcal D^{\frac{5-\alpha}{2}}\widetilde \Phi_\tau U]_1 \|_{L^\infty_TL^{\frac{6}{6-\alpha}}} \nonumber\\
	&\lesssim_M \|\pi_{\tau^{-2}}u\|_{L^{\alpha-1}_TL^{\frac{6(\alpha-1)}{\alpha-3}}}^{\alpha-1}+\|\pi_{\tau^{-1}}u\|_{L^{\alpha-1}_TL^{\frac{6(\alpha-1)}{\alpha-3}}}^{\alpha-1}.
\end{align}
If $\alpha=4$, {the quantity \eqref{I232} contains the} $L^3_TL^{18}$ norm which by Proposition \ref{PropStrichUNew} is uniformly bounded by a constant depending on $M$ and $T$. Thus, in this case, $\|I_{2,1,2}\|_{L^2 \times H^{-1}} \lesssim_{M,T} 1$. If $\alpha=3$ we instead use the logarithmic endpoint estimate from Proposition \ref{PropStrichUNew} for the $L^2_TL^\infty$ norm {to bound the term \eqref{I232}}, which gives $\|I_{2,1,2}\|_{L^2 \times H^{-1}} \lesssim_{M,T} 1+|\log\tau|$. Finally, to get the estimate for $I_{2,1,3}$, we observe that
\begin{align*}
	\frac{\mathrm{d}}{\mathrm{d}s} g'(u_{\tau,\theta}(s)) &= g''(u_{\tau,\theta}(s))\partial_su_{\tau,\theta}(s) \\
	&= g''(u_{\tau,\theta}(s))(\theta\pi_{\tau^{-2}}\partial_tu(s) + (1-\theta)\pi_{\tau^{-1}}\partial_tu(s)).
\end{align*}
If $\alpha=4$, we use the dual Strichartz estimate from Corollary \ref{CorStrichDualA} with $(p,q)=(3,18)$ to obtain
\begin{align*}
	\|I_{2,1,3}\|_{L^2 \times H^{-1}} &\lesssim_T \| g''(u_{\tau,\theta})\partial_su_{\tau,\theta}[\mathcal D^{\frac{1}{2}}\widetilde \Phi_\tau U]_1\|_{L^\frac32_TL^{\frac{18}{17}}} \\
	&\le \| g''(u_{\tau,\theta})\|_{L^\frac32_TL^9} \|\partial_su_{\tau,\theta}\|_{L^\infty_TL^2} \|[\mathcal D^{\frac{1}{2}}\widetilde \Phi_\tau U]_1\|_{L^\infty_TL^3} \\	
	&\lesssim \Big(\|\pi_{\tau^{-2}}u\|^2_{L^3_TL^{18}}+\|\pi_{\tau^{-1}}u\|^2_{L^3_TL^{18}} \Big)\|\partial_tu\|_{L^\infty_T L^2} \|U\|_{L^\infty_T(H^1\times L^2)} \\
	&\lesssim_{M,T} 1.
\end{align*}
In the case $\alpha=3$, we exploit that the polynomial $g(u) = -\mu u^3$ keeps the frequency localization $\pi_{\tau^{-2}}$ up to a factor $3$. This means that $I_{2,1,3} = \Pi_{3\tau^{-2}}I_{2,1,3}$.\footnote{In view of Remark \ref{RemGeneralAlpha}, this argument could be avoided by involving another triangle inequality with $\Pi_{\tau^{-2}}I_{2,1,3}$, for instance.} Hence, we can apply the dual endpoint logarithmic Strichartz estimate from Corollary \ref{CorStrichDualA} to conclude
\begin{align*}
	&\|\Pi_{3\tau^{-2}}I_{2,1,3}\|_{L^2 \times H^{-1}} \\
	&\lesssim_T (1+|\log\tau|)^{\frac12} \| g''(u_{\tau,\theta})\partial_su_{\tau,\theta}[\mathcal D\widetilde \Phi_\tau U]_1\|_{L^2_TL^1} \\
	&\le (1+|\log\tau|)^{\frac12} \| g''(u_{\tau,\theta})\|_{L^2_TL^\infty}\|\partial_su_{\tau,\theta}\|_{L^\infty_TL^2}\|[\mathcal D\widetilde \Phi_\tau U]_1\|_{L^\infty_TL^2} \\
	&\lesssim (1+|\log\tau|)^{\frac12} \Big(\|\pi_{\tau^{-2}}u\|_{L^2_TL^\infty}+\|\pi_{\tau^{-1}}u\|_{L^2_TL^\infty} \Big)\|\partial_tu\|_{L^\infty_T L^2} \|U\|_{L^\infty_T(H^1\times L^2)} \\
	&\lesssim_{M,T} (1+|\log\tau|),
\end{align*}
again in the end using the logarithmic endpoint estimate for $u$ from Proposition \ref{PropStrichUNew}.
\end{proof}

\subsection{Estimates for local error terms}

Next, we treat the term $D_n$ from \eqref{DefTermeStrangNew} that includes the local error terms. 

\begin{Lemma}\label{Lemd}
Let $U = (u,\partial_tu)$, $T$, and $M$ be given by Assumption \ref{AssNew}. For the terms defined in \eqref{DefdStrangNew}, we have the estimate
\[ \|d_3\|_{L^1_TH^{-1}} \lesssim_{M,T} 1. \]
Moreover, if $\alpha=3$,
\begin{align*}
	\|d_1\|_{L^1_TL^2} &\lesssim_{M,T} (1+|\log\tau|), \\
	\|d_2\|_{L^2_TL^1} &\lesssim_{M,T} (1+|\log\tau|)^{\frac12},
\end{align*}
and for $\alpha=4$,
\begin{align*}
	\|d_1\|_{L^1_TL^2} + \|d_2\|_{L^\frac32_TL^{\frac{18}{17}}} \lesssim_{M,T} \tau^{-\frac12}.
\end{align*}
All estimates are uniform in $\tau \in (0,1]$. 
\end{Lemma}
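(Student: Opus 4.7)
The plan is to estimate each of the four norms by Hölder's inequality with tailored exponents, reducing to combinations of Strichartz norms of $\pi_{\tau^{-1}}u$ (from Proposition \ref{PropStrichUNew}), the energy bound $M$, and Bernstein inequalities (Lemma \ref{LemBernstein}) that convert $L^2$-based regularity of $\pi_{\tau^{-1}}\partial_tu$ and $|\nabla|\pi_{\tau^{-1}}u$ into higher $L^q$-norms at the price of a negative power of $\tau$. For $d_3$, the embedding $L^{6/5} \hookrightarrow H^{-1}$ allows us to avoid negative Sobolev spaces altogether, so that both $\alpha=3$ and $\alpha=4$ reduce to Hölder estimates with only nonnegative derivatives. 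For $\alpha=3$ the split $\tfrac{5}{6} = \tfrac{1}{3}+\tfrac{1}{2}$ together with $H^1 \hookrightarrow L^6$ already yields a uniform bound, while for $\alpha=4$ the split $\tfrac{5}{6} = \tfrac{1}{6}+\tfrac{2}{3}$ produces $\|\pi_{\tau^{-1}}u\|_{L^{18}}^3 \cdot \|u\|_{L^6}^4$, with the $L^3_TL^{18}$-Strichartz norm of $\pi_{\tau^{-1}}u$ controlled by Proposition \ref{PropStrichUNew}.

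For $d_1$, the two cases follow a similar pattern, but now the logarithmic correction and the Bernstein loss enter. When $\alpha=3$, one factors $\|g'(\pi_{\tau^{-1}}u)\|_{L^\infty} \lesssim \|\pi_{\tau^{-1}}u\|_{L^\infty}^2$ and integrates in time using the endpoint estimate $\|\pi_{\tau^{-1}}u\|_{L^2_TL^\infty} \lesssim (1+|\log\tau|)^{1/2}$. When $\alpha=4$, Hölder with $\tfrac{1}{2} = \tfrac{3}{18}+\tfrac{1}{3}$ pairs $\|\pi_{\tau^{-1}}u\|_{L^{18}}^3$ with $\|\pi_{\tau^{-1}}\partial_tu\|_{L^3}$; the latter is bounded via Bernstein by $\tau^{-1/2}\|\partial_tu\|_{L^2}$, and the former remains in $L^1_T$ by the $L^3_TL^{18}$-Strichartz bound, producing the announced $\tau^{-1/2}$-factor. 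The estimate for $d_2$ when $\alpha=3$ is the easiest of the four: factor out $\|g''(\pi_{\tau^{-1}}u)\|_{L^\infty}$ and bound the energy-type quadratic term by $M^2$, then integrate in $L^2_T$ against $\|\pi_{\tau^{-1}}u\|_{L^2_TL^\infty}$.

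The main obstacle is the bound for $d_2$ when $\alpha=4$, where the target norm $L^{3/2}_TL^{18/17}$ sits very close to $L^1_TL^1$, but its unusual exponent $18/17$ forces a careful balance between $u_\tau \coloneqq \pi_{\tau^{-1}}u$ on Strichartz footing and its space/time derivatives on Bernstein footing. The plan is to use the split $\tfrac{17}{18} = \tfrac{1}{9}+\tfrac{5}{6}$ so that the factor $u_\tau^2$ stemming from $g''(u_\tau)$ lies in $L^9$ (giving $\|u_\tau\|_{L^{18}}^2$), while the quadratic expressions $|\nabla u_\tau|^2$ and $(\pi_{\tau^{-1}}\partial_tu)^2$ are placed in $L^{6/5}$. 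This means $\nabla u_\tau$ and $\pi_{\tau^{-1}}\partial_tu$ must be estimated in $L^{12/5}$, and Bernstein applied to each contributes $\tau^{-3(1/2-5/12)} = \tau^{-1/4}$; since there are two such factors inside each product, the total prefactor is exactly $\tau^{-1/2}$. Integration in $L^{3/2}_T$ then brings back the $L^3_TL^{18}$-Strichartz norm of $u_\tau$, which is finite by Proposition \ref{PropStrichUNew}, completing the estimate.
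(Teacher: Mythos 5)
Your proposal is correct and matches the paper's proof essentially step for step: the same Hölder splits, the identical Strichartz pair $(3,18)$ combined with Bernstein losses for $\alpha=4$, and the same endpoint $L^2_TL^\infty$ estimate for $\alpha=3$. The only deviation is a trivial Hölder variant for $d_3$ when $\alpha=4$: you split $\tfrac56=\tfrac16+\tfrac23$, placing $g(u)$ in $L^\infty_T L^{3/2}$ via $H^1\hookrightarrow L^6$ so that only the single Strichartz pair $(3,18)$ is needed, whereas the paper splits $\tfrac56=\tfrac13+\tfrac12$ and invokes the two pairs $(6,9)$ and $(8,8)$; both routes are equally valid.
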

\begin{proof}
First consider
\begin{align*}
	\|d_3\|_{L^1_TH^{-1}} &\lesssim \|g'(\pi_{\tau^{-1}}u)\pi_{\tau^{-1}}g(u)\|_{L^1_TL^\frac65} \lesssim \|g'(\pi_{\tau^{-1}}u)\|_{L^2_TL^3}\|g(u)\|_{L^2_TL^2} \\
	&\lesssim \|\pi_{\tau^{-1}}u\|^{\alpha-1}_{L^{2(\alpha-1)}_TL^{3(\alpha-1)}}\|u\|_{L^{2\alpha}_TL^{2\alpha}}^\alpha \lesssim_{M,T} 1,
\end{align*}
where for $\alpha=3$ it is enough to use Sobolev embedding, and for $\alpha=4$ we use Proposition \ref{PropStrichUNew} and the $H^1$ admissibility of $(p,q)=(6,9)$ and $(8,8)$.
Let now $\alpha=3$, we then derive
\begin{align*}
	\|d_1\|_{L^1_TL^2} &\lesssim \|g'(\pi_{\tau^{-1}}u)\pi_{\tau^{-1}}\partial_tu\|_{L^1_TL^2} \le \|g'(\pi_{\tau^{-1}}u)\|_{L^1_TL^\infty}\|\pi_{\tau^{-1}}\partial_tu\|_{L^\infty_TL^2}\\ 
	&\lesssim_M \|\pi_{\tau^{-1}}u\|^2_{L^2_TL^\infty} \lesssim_{M,T} 1+|\log\tau|
\end{align*}
and
\begin{align*}
	\|d_2\|_{L^2_TL^1} &\lesssim \|g''(\pi_{\tau^{-1}}u)\big[|\nabla \pi_{\tau^{-1}}u|^2 + (\pi_{\tau^{-1}}\partial_tu)^2\big]\|_{L^2_TL^1} \\
	&\le \|g''(\pi_{\tau^{-1}}u)\|_{L^2_TL^\infty} \||\nabla \pi_{\tau^{-1}}u|^2 + (\pi_{\tau^{-1}}\partial_tu)^2\|_{L^\infty_TL^1}\\ 
	&\lesssim \|\pi_{\tau^{-1}}u\|_{L^2_TL^\infty}\Big(\||\nabla u|\|^2_{L^\infty_TL^2}+\|\partial_tu\|^2_{L^\infty_TL^2}\Big)
	\lesssim_{M,T} (1+|\log\tau|)^\frac12,
\end{align*}
using the logarithmic endpoint estimate from Proposition \ref{PropStrichUNew}. Similarly, for $\alpha=4$,
\begin{align*}
	\|d_1\|_{L^1_TL^2} &\lesssim  \|g'(\pi_{\tau^{-1}}u)\|_{L^1_TL^6}\|\pi_{\tau^{-1}}\partial_tu\|_{L^\infty_TL^3}\\ 
	&\lesssim \|\pi_{\tau^{-1}}u\|^3_{L^3_TL^{18}}\tau^{-\frac12}\|\partial_tu\|_{L^\infty_TL^2} \lesssim_{M,T} \tau^{-\frac12}
\end{align*}
and
\begin{align*}
	\|d_2\|_{L^\frac32_TL^{\frac{18}{17}}} &\lesssim  \|g''(\pi_{\tau^{-1}}u)\|_{L^{\frac32}_TL^9} \||\nabla \pi_{\tau^{-1}}u|^2 + (\pi_{\tau^{-1}}\partial_tu)^2\|_{L^\infty_TL^{\frac65}}\\ 
	&\lesssim \|\pi_{\tau^{-1}}u\|^2_{L^3_TL^{18}}\Big(\||\nabla \pi_{\tau^{-1}}u|\|^2_{L^\infty_TL^{\frac{12}{5}}}+\|\pi_{\tau^{-1}}\partial_tu\|^2_{L^\infty_TL^{\frac{12}{5}}}\Big) \\
	&\lesssim_{M,T} \tau^{-\frac12}\Big(\||\nabla u|\|^2_{L^\infty_TL^{2}}+\|\partial_tu\|^2_{L^\infty_TL^2}\Big) \lesssim_M \tau^{-\frac12},
\end{align*}
employing Proposition \ref{PropStrichUNew} with $(p,q)=(3,18)$. The loss of $\tau^{-1/2}$ comes from the Bernstein inequality from Lemma \ref{LemBernstein}.
\end{proof}

The term $d_4$ from \eqref{DefdStrangNew} is the most difficult because it involves second partial derivatives of $u$. Therefore, we follow the same strategy as in Lemma \ref{LemBPI}. However, since the situation is now more ``discrete'' in time, we apply summation by parts instead of integration by parts. Roughly speaking, this transforms the term containing $d_4$ into terms that can be estimated in the same way as $d_1$ and $d_2$ in Lemma \ref{Lemd}. To use summation by parts, we need the filter $\Pi_{\tau^{-1}}$, cf.\ the discussion before Lemma \ref{LemCancel}. Such a strategy was again already used in \cite{Averaged} in a situation without Strichartz estimates.

\begin{Lemma}\label{LemLokFStrangNew}
Let $U = (u,\partial_tu)$, $T$, and $M$ be given by Assumption \ref{AssNew}. Let $D_n$ be given by \eqref{DefTermeStrangNew}. We then have the following estimates. If $\alpha=3$,
\begin{align*} 
	\|D_n\|_{H^1 \times L^2} \lesssim_{M,T} \tau(1+|\log\tau|), \\
	\|D_n\|_{L^2 \times H^{-1}} \lesssim_{M,T} \tau^2(1+|\log\tau|),  
\end{align*}
and for $\alpha=4$,
\begin{align*} 
	\|D_n\|_{H^1 \times L^2} \lesssim_{M,T} \tau^{\frac12}, \\
	\|D_n\|_{L^2 \times H^{-1}} \lesssim_{M,T} \tau^{\frac32},  
\end{align*}
uniformly in $\tau \in (0,1]$ and $n \in \N_0$ with $n\tau \le T$.
\end{Lemma}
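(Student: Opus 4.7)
My plan is to combine the summation-by-parts identity from Lemma \ref{LemCancel} with the bounds of Lemma \ref{Lemd}, the dual Strichartz estimate (Corollary \ref{CorStrichDualA}), and Bernstein's inequality (Lemma \ref{LemBernstein}), as in the strategy outlined just before Lemma \ref{LemCancel}.

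I decompose $D_n$ according to the four source terms $d_j$ and estimate each contribution separately. For $D_n^{d_1}$ (where $(d_1, 0)$ enters the vector), the bound $\|e^{tA}(d_1, 0)\|_{L^2 \times H^{-1}} \lesssim \|d_1\|_{L^2}$ together with Lemma \ref{Lemd} gives the $L^2 \times H^{-1}$ rate, and the $H^1 \times L^2$ rate follows from Bernstein's inequality applied to the $\pi_{C\tau^{-1}}$-localized $d_1$ (losing a factor $\tau^{-1}$ which is absorbed by $\tau^2$). The $D_n^{d_3}$-contribution is handled analogously, using $\|e^{tA}(0, d_3)\|_{L^2 \times H^{-1}} \lesssim \|d_3\|_{H^{-1}}$ and the $L^1_T H^{-1}$ bound from Lemma \ref{Lemd}. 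For $D_n^{d_2}$, I would rewrite it as the continuous integral \eqref{DefTermeStrangNew} and apply Corollary \ref{CorStrichDualA}: in the cubic case the endpoint (logarithmic) version together with $\|d_2\|_{L^2_T L^1} \lesssim (1+|\log\tau|)^{1/2}$ gives the $\tau^2(1+|\log\tau|)$ rate in $L^2 \times H^{-1}$; in the quartic case the $H^1$-admissible pair $(3, 18)$ with $\|d_2\|_{L^{3/2}_T L^{18/17}} \lesssim \tau^{-1/2}$ gives $\tau^{3/2}$. The $H^1 \times L^2$ rates for $D_n^{d_2}$ are obtained by combining with the $d_2$-structured terms that the summation-by-parts reduction of $D_n^{d_4}$ below produces.

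For the critical $D_n^{d_4}$, I use $(0, \pi_{\tau^{-1}}\Delta u) = \Pi_{\tau^{-1}}A(u, 0)$ and substitute $\Pi_{\tau^{-1}}A = \tau^{-1}(e^{\tau A} - I)\Psi_\tau$ from Lemma \ref{LemCancel}. Inserting this into the discrete sum \eqref{eq:StrangDAlt} and performing Abel summation in $k$ --- absorbing the shift $e^{\tau A}$ into the exponential $e^{((n-k)\tau - s)A}$ via the index shift $k \mapsto k-1$ --- yields boundary terms at $k = 0$ and $k = n-1$ (controlled directly using the uniform Sobolev boundedness of $\Psi_\tau$) plus a sum whose summand is the discrete time-difference (in $k$) of $g'(\pi_{\tau^{-1}} u(t_k+s))\,[\Psi_\tau(u(t_k+s), 0)]_2$. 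By the fundamental theorem of calculus and the product rule, this discrete difference equals $\tau$ times an average of two terms: (i) $g''(\pi_{\tau^{-1}} u)\,\pi_{\tau^{-1}} \partial_t u \cdot [\Psi_\tau(u, 0)]_2$ of $d_2$-type algebraic structure, and (ii) $g'(\pi_{\tau^{-1}} u) \cdot [\Psi_\tau(\partial_t u, 0)]_2$ of $d_1$-type structure. The extra $\tau$ cancels the $\tau^{-1}$ from Lemma \ref{LemCancel}, so the $d_4$-contribution reduces to bounds of the same algebraic shape as $D_n^{d_1}$ and $D_n^{d_2}$ already analyzed.

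The main technical obstacle is the Abel summation for $d_4$: since the scalar multiplier $g'(\pi_{\tau^{-1}} u(t_k+s))$ depends on $k$ and does not commute with the Fourier operator $e^{\tau A}$, the summation does not telescope cleanly and one must identify the derivative structure of the product carefully. The filter $\Pi_{\tau^{-1}}$ is indispensable --- Lemma \ref{LemCancel} fails at resonant step sizes without it. Tracking the logarithmic loss in the cubic case and the $\tau^{-1/2}$ Bernstein loss in the quartic case through the reduction produces the sharp convergence rates stated in the lemma.
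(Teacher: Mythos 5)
Your treatment of the terms coming from $d_1$, $d_2$, $d_3$ essentially matches the paper: the $L^1_TL^2$, $L^1_TH^{-1}$, and dual-Strichartz bounds of Lemma~\ref{Lemd} and Corollary~\ref{CorStrichDualA} are used exactly as you describe, and the $H^1\times L^2$ bound for the full $D_n$ does indeed follow from the $L^2\times H^{-1}$ bound plus Bernstein, using the frequency localization $D_n = \Pi_{\alpha\tau^{-1}}D_n$. (The paper applies Bernstein once to the whole $D_n$, rather than term by term, and does not need to ``combine with the $d_4$-structured terms'' to get the energy bound --- that claim in your proposal is unnecessary.)

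The genuine gap is in the $d_4$ analysis, and it is structural. You write $(0,\pi_{\tau^{-1}}\Delta u) = \Pi_{\tau^{-1}}A(u,0)$ and then apply Lemma~\ref{LemCancel}, so the partial sums that arise in Abel summation are of the form $\sum_{k}(e^{\tau A}-I)\Psi_\tau\bigl(u(t_k+s),0\bigr)$. These sums do \emph{not} collapse: since $(u,0)$ does not solve the first-order system, $e^{\tau A}(u(t_k+s),0)$ bears no useful relation to $(u(t_{k+1}+s),0)$, and there is no telescoping. The paper instead writes $(0,d_4) = 2G'(\Pi_{\tau^{-1}}U)A^2\Pi_{\tau^{-1}}U$ and sums by parts with $b_k = A^2\Pi_{\tau^{-1}}U(t_k+s)$; the corresponding partial sums $S(\tau,j,s) = \sum_{k\le j}(e^{\tau A}-I)\Psi_\tau A\,U(t_k+s)$ \emph{do} telescope, because $U$ solves the equation and the Duhamel identity $e^{\tau A}U(t_k+s) = U(t_{k+1}+s) - \int_0^\tau e^{(\tau-\sigma)A}G(U(t_k+s+\sigma))\,\mathrm d\sigma$ reduces the sum to a single boundary term plus an $L^1_TL^2$ integral of $g(u)$, both uniformly bounded. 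This is the heart of the argument and your choice of vector $(u,0)$ in place of the full $U$ destroys it.

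Relatedly, your claim that after the index shift the summand becomes the discrete time-difference of $g'(\pi_{\tau^{-1}}u(t_k+s))\,[\Psi_\tau(u(t_k+s),0)]_2$ does not hold. In the paper's Abel summation the exponential is carried by $a_k = e^{(n-k)\tau A}G'(\Pi_{\tau^{-1}}U(t_k+s))$, and the resulting difference $a_k - a_{k+1}$ mixes the operator shift $I - e^{-\tau A}$ (handled via $\varphi_1(-\tau A)$, yielding $I_{3,1}$) and the coefficient increment $G'(\Pi_{\tau^{-1}}U(t_k+s)) - G'(\Pi_{\tau^{-1}}U(t_{k+1}+s))$ (yielding $I_{3,2}$, estimated with Corollary~\ref{CorStrichDualA}); neither of these is simply $\tau$ times a derivative of a product. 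You flag the non-commutativity of $e^{\tau A}$ with the multiplication by $g'$ as ``the main technical obstacle'' but offer no mechanism to resolve it; what resolves it in the paper is precisely putting the exponential into $a_k$ and the $b_k$-partial-sum bound via Duhamel, which your setup lacks.

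Finally, you do not address the boundary terms $I_1,I_2$ quantitatively: they must be bounded by $\tau^{-(\alpha-3)/2}$ using the Bernstein loss in $\|g'(\pi_{\tau^{-1}}u)\|_{L^3}$, which is needed to match the stated rates after multiplying by $\tau^2$; claiming they are ``controlled directly'' by the uniform boundedness of $\Psi_\tau$ alone hides this step.
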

\begin{proof}
We first note that it suffices to show the bounds in the $L^2\times H^{-1}$ norm, since by frequency localization and the Bernstein inequality from Lemma \ref{LemBernstein} we have\footnote{One could also prove the bounds for the energy norm directly by employing a first-order representation of the quadrature error in the proof of Proposition \ref{PropFehlerRekStrangNew}.}
\[\|D_n\|_{H^1\times L^2} = \|\Pi_{\alpha\tau^{-1}}D_n\|_{H^1\times L^2} \lesssim \tau^{-1} \|D_n\|_{L^2\times H^{-1}}. \]
Using Corollary \ref{CorStrichDualA} for the term involving $d_2$, we start with the estimate
\begin{align*}
	\|D_n\|_{L^2\times H^{-1}} &\lesssim_T \tau^2\Big( \|d_1\|_{L^1_TL^2} + \tilde d_2 + \|d_3\|_{L^1_TH^{-1}} +\|D_{n,4}\|_{L^2 \times H^{-1}}),
\end{align*}
where $\tilde d_2 \coloneqq (1+|\log\tau|)^{1/2}\|d_2\|_{L^2_TL^1}$ for $\alpha=3$ and $\tilde d_2 \coloneqq \|d_2\|_{L^{3/2}_TL^{{18}/{17}}}$ for $\alpha=4$, as well as
\[D_{n,4} \coloneqq \int_0^{t_n} e^{(n\tau-s)A}(\lfloor\tfrac s\tau\rfloor-\tfrac s\tau)(\lceil\tfrac s\tau\rceil-\tfrac s\tau) \begin{pmatrix}0 \\ d_4(s)\end{pmatrix}\dd s  \]
independent of $\alpha$.
The terms containing $d_1$, $d_2$, and $d_3$ are estimated by Lemma \ref{Lemd}.
We still need to deal with the term $D_{n,4}$. As in \eqref{eq:StrangDAlt}, we write
\begin{align*}
	D_{n,4} = \int_0^\tau \tfrac s \tau (\tfrac s \tau -1) \sum_{k=0}^{n-1} e^{((n-k)\tau-s) A} \begin{pmatrix}0 \\ d_4(t_k+s)\end{pmatrix} \dd s. 
\end{align*}
Moreover,
\begin{align*}
	\begin{pmatrix}0 \\ d_4(t_k+s)\end{pmatrix}  = 2 G'(\Pi_{\tau^{-1}}U(t_k+s))A^2\Pi_{\tau^{-1}}U(t_k+s),
\end{align*}
where
\[G'(\Pi_{\tau^{-1}}U(t_k+s)) = \begin{pmatrix}
	0&0 \\ g'(\pi_{\tau^{-1}}u(t_k+s))&0
\end{pmatrix}.\]
Thus,
\[D_{n,4} = 2\int_0^\tau \tfrac s \tau (\tfrac s \tau -1) \sum_{k=0}^{n-1} e^{((n-k)\tau-s) A} G'(\Pi_{\tau^{-1}}U(t_k+s))A^2\Pi_{\tau^{-1}}U(t_k+s) \dd s. \]
We now apply the summation by parts formula 
\[ \sum_{k=0}^{n-1}a_kb_k = a_{n-1}b_{n-1} + a_{n-1}\sum_{k=0}^{n-2}b_k + \sum_{k=0}^{n-2} (a_k-a_{k+1})\sum_{j=0}^k b_j\]
with $a_k=e^{(n-k)\tau A}G'(\Pi_{\tau^{-1}}U(t_k+s))$ and $b_k=A^2\Pi_{\tau^{-1}}U(t_k+s)$. 
This yields 
\[D_{n,4} = 2(I_1+I_2+I_3) \]
with
\begin{align*}
	I_1 &\coloneqq \int_0^\tau \tfrac s \tau (\tfrac s \tau -1) e^{(\tau-s) A}G'(\Pi_{\tau^{-1}}U(t_{n-1}+s))A^2\Pi_{\tau^{-1}}U(t_{n-1}+s) \dd s, \\
	I_2 &\coloneqq \int_0^\tau \tfrac s \tau (\tfrac s \tau -1) e^{(\tau-s) A}G'(\Pi_{\tau^{-1}}U(t_{n-1}+s))\sum_{k=0}^{n-2}A^2\Pi_{\tau^{-1}}U(t_{k}+s) \dd s, \\
	I_3 &\coloneqq \int_0^\tau \tfrac s \tau (\tfrac s \tau -1)\sum_{k=0}^{n-2}e^{((n-k)\tau-s)A}\Big(G'(\Pi_{\tau^{-1}}U(t_{k}+s)) \\
	&\qquad \qquad \qquad-e^{-\tau A}G'(\Pi_{\tau^{-1}}U(t_{k+1}+s))\Big)\sum_{j=0}^{k}A^2\Pi_{\tau^{-1}}U(t_{j}+s) \dd s.
\end{align*}
Next, we insert the equality
\[\tau A \Pi_{\tau^{-1}} = (e^{\tau A}-I)\Psi_\tau, \]
from Lemma \ref{LemCancel}, where the operator $\Psi_\tau$
is bounded on $H^r \times H^{r-1}$ for all $r \in \R$, uniformly in $\tau \in (0,1]$. 
The term $I_1$ is bounded by Sobolev, Hölder, and Bernstein inequalities via
\begin{align*}
	\|I_1\|_{L^2 \times H^{-1}} &\lesssim \sup_{s \in[0,\tau]}\|g'(\pi_{\tau^{-1}}u(t_{n-1}+s))[(e^{\tau A}-I)\Psi_\tau AU(t_{n-1}+s)]_1\|_{L^{\frac65}} \\
	&\lesssim  \sup_{s \in[0,\tau]}\|g'(\pi_{\tau^{-1}}u(t_{n-1}+s))\|_{L^3}\|AU(t_{n-1}+s)\|_{L^2\times H^{-1}} \\
	&\lesssim_M \|\pi_{\tau^{-1}}u\|_{L^\infty_T L^{3(\alpha-1)}}^{\alpha-1} \lesssim \tau^{-\frac{\alpha-3}{2}}\|u\|_{L^\infty_T H^1}^{\alpha-1} \lesssim_M \tau^{-\frac{\alpha-3}{2}}.
\end{align*}
{As our next step}, for $j \in \{0,\dots,n-2\}$ and $s \in [0,\tau]$ we define the sum
\[ S(\tau,j,s) \coloneqq \tau\sum_{k=0}^{j}A^2\Pi_{\tau^{-1}}U(t_{k}+s) = \sum_{k=0}^{j}(e^{\tau A}-I)\Psi_{\tau}AU(t_{k}+s). \]
{Note that a rough estimate of $S(\tau,j,s)$ in the $L^2 \times H^{-1}$ norm using the triangle inequality would either produce a loss of $\tau^{-1}$, destroying the optimal convergence rate, or require $H^2$ regularity of $u(t)$, which is not covered by our Assumption \ref{AssNew}. Therefore, we have to treat this term with more care.}
A shifted version of Duhamel's formula \eqref{Duh} yields
\begin{align*}
	S(\tau,j,s) &= \sum_{k=0}^{j}\Psi_\tau A[U(t_{k+1}+s)-U(t_k+s) ] \\
	& \quad- \sum_{k=0}^j \int_0^\tau \Psi_\tau Ae^{(\tau-\sigma)A}G(U(t_k+s+\sigma)) \dd \sigma.
\end{align*}
We can exploit this telescopic sum to conclude that
\begin{align}\label{eq:telsum}
	\|S(\tau,j,s)\|_{L^2 \times H^{-1}} &\lesssim_T \|U(t_{j+1}+s)-U(s)\|_{H^1 \times L^2} \nonumber\\
	&\quad + \sum_{k=0}^j \int_0^\tau\|g(u(t_k+s+\sigma))\|_{L^2} \dd \sigma \nonumber\\ 
	&\lesssim \|U\|_{L^\infty_T(H^1 \times L^2)} + \|g(u)\|_{L^1_TL^2} \lesssim_{M,T} 1,
\end{align}
uniformly in $j \in \{0,\dots,n-2\}$, $\tau \in (0,1]$ and $s \in [0,\tau]$, where also \eqref{eq:g(u)} was used. {This is the cancellation that was already discussed in \eqref{eq:cancellation} in a simplified form.}
Hence, we can bound the term $I_2$ similar as $I_1$ by
\begin{align*}
	\|I_2\|_{L^2 \times H^{-1}} &\lesssim \sup_{s \in[0,\tau]}\|g'(\pi_{\tau^{-1}}u(t_{n-1}+s))[S(\tau,n-2,s)]_1\|_{L^{\frac65}} \\
	&\lesssim  \sup_{s \in[0,\tau]}\|g'(\pi_{\tau^{-1}}u(t_{n-1}+s))\|_{L^3}\|S(\tau,n-2,s)\|_{L^2\times H^{-1}} \\
	&\lesssim_{M,T}\tau^{-\frac{\alpha-3}{2}}.
\end{align*}
For the term $I_3$, we need another decomposition. We split it as
\[I_3 = I_{3,1} + I_{3,2}, \]
where
\begin{align*}
	I_{3,1} &\coloneqq \tfrac1\tau\int_0^\tau \tfrac s \tau (\tfrac s \tau -1)\sum_{k=0}^{n-2}e^{((n-k)\tau-s)A}(I-e^{-\tau A})G'(\Pi_{\tau^{-1}}U(t_{k}+s))\\
	&\quad \cdot S(\tau,k,s) \dd s, \\
	I_{3,2} &\coloneqq \tfrac1\tau\int_0^\tau \tfrac s \tau (\tfrac s \tau -1)\sum_{k=0}^{n-2}e^{((n-k-1)\tau-s)A} \\
	&\quad\ \cdot\Big(G'(\Pi_{\tau^{-1}}U(t_{k}+s))-G'(\Pi_{\tau^{-1}}U(t_{k+1}+s))\Big) S(\tau,k,s)\dd s.
\end{align*}
For $I_{3,1}$, observe that
\[ I-e^{-\tau A} = \tau A \varphi_1(-\tau A),\]
where $\varphi_1(z) \coloneqq (e^z-1)/z$. Since the function $\varphi_1$ is bounded on $\iu\R$, the operator $\varphi_1(-\tau A)$ is bonded on $H^1 \times L^2$, uniformly in $\tau \in (0,1]$. 
Hence, for $\alpha=3$, we derive
\begin{align*}
	\|I_{3,1}\|_{L^2 \times H^{-1}} &\lesssim_T \int_0^\tau \sum_{k=0}^{n-2}\|G'(\Pi_{\tau^{-1}}U(t_{k}+s))S(\tau,k,s)\|_{H^1 \times L^2} \dd s \\
	&\le \int_0^\tau \sum_{k=0}^{n-2}\|g'(\pi_{\tau^{-1}}u(t_{k}+s))\|_{L^\infty}\|S(\tau,k,s)\|_{L^2 \times H^{-1}} \dd s \\
	&\lesssim_{M,T} \|\pi_{\tau^{-1}}u\|_{L^2_TL^\infty}^2 \lesssim_{M,T} 1+|\log\tau|,
\end{align*}
using estimate \eqref{eq:telsum} and the logarithmic endpoint estimate for $u$ from Proposition \ref{PropStrichUNew}. Similarly, for $\alpha=4$, one obtains
\begin{align*}
	\|I_{3,1}\|_{L^2 \times H^{-1}} &\lesssim_T \int_0^\tau \sum_{k=0}^{n-2}\|g'(\pi_{\tau^{-1}}u(t_{k}+s))\|_{L^6}\|[S(\tau,k,s)]_1\|_{L^3} \dd s \\
	&\lesssim \|\pi_{\tau^{-1}}u\|_{L^3_TL^{18}}^3\tau^{-\frac12} \sup_{\substack{k \in \{0,\dots,n-2\}\\ s\in[0,\tau] }}\|S(\tau,k,s)\|_{L^2 \times H^{-1}} \lesssim_{M,T} \tau^{-\frac12}
\end{align*}
using Proposition \ref{PropStrichUNew} and the Bernstein inequality.
For the term $I_{3,2}$, we first substitute $\tilde s = s+t_k$ to get
\begin{align*}
	I_{3,2} &= \tfrac1\tau\int_0^{t_{n-1}} e^{((n-1)\tau-s)A} (\tfrac s\tau-\lfloor\tfrac s\tau\rfloor)(\tfrac s\tau-\lceil\tfrac s\tau\rceil)\\
	&\quad \cdot \Big(G'(\Pi_{\tau^{-1}}U(s))-G'(\Pi_{\tau^{-1}}U(\tau+s))\Big) S(\tau,\lfloor\tfrac s\tau\rfloor,s-\tau\lfloor\tfrac s\tau\rfloor)\dd s .
\end{align*}
We first consider the case $\alpha=3$. By the polynomial structure of $g$, we have the frequency localization $I_{3,2} = \Pi_{3\tau^{-1}}I_{3,2}$. We can thus apply the dual logarithmic endpoint Strichartz estimate from Corollary \ref{CorStrichDualA} to obtain
\begin{align}\label{eq:I_32}
	\|I_{3,2}\|_{L^2 \times H^{-1}} &\lesssim_T \tfrac1\tau \Big((1+|\log\tau|) \int_0^{t_{n-1}}\big\|\big(g'(\pi_{\tau^{-1}}u(s))-g'(\pi_{\tau^{-1}}u(\tau+s))\big) \nonumber \\
	&\hspace{4.5cm} \cdot [S(\tau,\lfloor\tfrac s\tau\rfloor,s-\tau\lfloor\tfrac s\tau\rfloor)]_1\big\|_{L^1}^2 \dd s\Big)^{\frac12} \nonumber \\
	&\lesssim_{M,T} \tfrac1\tau(1+|\log\tau|)^{\frac12} \|g'(\pi_{\tau^{-1}}u)-g'(\pi_{\tau^{-1}}u(\tau+\cdot))\|_{L^2_{t_{n-1}}L^2},
\end{align}
also using the bound \eqref{eq:telsum} for $S$. To conclude,
the equation
\[ u(s) - u(\tau+s) = -\int_0^\tau \partial_t u(s+\sigma) \dd \sigma  \]
implies
\begin{align*}
	&\|g'(\pi_{\tau^{-1}}u)-g'(\pi_{\tau^{-1}}u(\tau+\cdot))\|_{L^2_{t_{n-1}}L^2} \\
	&\lesssim \||g''(\pi_{\tau^{-1}}u)|+ |g''(\pi_{\tau^{-1}}u(\tau+\cdot))|\|_{L^2_{t_{n-1}}L^\infty}\sup_{s\in[0,t_{n-1}]}\int_0^\tau \|\partial_t u(s+\sigma)\|_{L^2} \dd \sigma \\
	&\lesssim_{M,T} \|\pi_{\tau^{-1}}u\|_{L^2_TL^\infty}\tau\|\partial_tu\|_{L^\infty_TL^2} \lesssim_{M,T} (1+|\log\tau|)^{\frac12}\tau. 
\end{align*}
Together with \eqref{eq:I_32}, this implies the desired bound for $I_{3,2}$.

If $\alpha=4$, we follow a similar strategy to obtain
\begin{align*}
	\|I_{3,2}\|_{L^2 \times H^{-1}} &\lesssim_T \tfrac1\tau \Big( \int_0^{t_{n-1}}\big\|\big(g'(\pi_{\tau^{-1}}u(s))-g'(\pi_{\tau^{-1}}u(\tau+s))\big) \nonumber \\
	&\hspace{4cm} \cdot [S(\tau,\lfloor\tfrac s\tau\rfloor,s-\tau\lfloor\tfrac s\tau\rfloor)]_1\big\|_{L^{\frac{18}{17}}}^{\frac32} \dd s\Big)^{\frac23} \nonumber \\
	&\lesssim_{M,T} \tfrac1\tau \|g'(\pi_{\tau^{-1}}u)-g'(\pi_{\tau^{-1}}u(\tau+\cdot))\|_{L^{\frac32}_{t_{n-1}}L^{\frac94}} \\
	&\lesssim \|g''(\pi_{\tau^{-1}}u)\|_{L^{\frac32}_TL^9} \|\pi_{\tau^{-1}}\partial_t u\|_{L^\infty_TL^3}\\
	&\lesssim_{M,T}\tau^{-\frac12} \|\pi_{\tau^{-1}}u\|^2_{L^3_TL^{18}}\|\partial_tu\|_{L^\infty_TL^2} \lesssim_{M,T} \tau^{-\frac12},
\end{align*}
which concludes the proof.
\end{proof}

\subsection{Proof of the global error bounds for $\alpha=3$}
We give different proofs of the global error bounds depending on $\alpha \in \{3,4\}$, since the proof for $\alpha=3$ is somewhat simpler and does not use discrete-time Strichartz estimates. We still need to deal with the term $Q_n$ from \eqref{FehlerRekStrangNew}. For $\alpha=3$ it turns out that it is enough to use Sobolev and Hölder inequalities. 
We write $u_n$ for the first component of $U_n$, as well as $e_n$ for the first component of $E_n$. 

\begin{Lemma}\label{LemStabStrang3}
Let $U = (u,\partial_tu)$, $T$, and $M$ be given by Assumption \ref{AssNew} with $\alpha=3$. Define the error $E_n$ by \eqref{Strang} and \eqref{DefErrorStrang}. We then have the estimates
\begin{align*} 
	\|g(\pi_{\tau^{-1}}u(t_n))-g(\pi_{\tau^{-1}}u_n)\|_{L^2} &\lesssim_{M}\Big(1+{\|e_{n}\|_{H^1}^2}\Big)\|e_n\|_{H^1}, \\
	\|g(\pi_{\tau^{-1}}u(t_n))-g(\pi_{\tau^{-1}}u_n)\|_{H^{-1}} &\lesssim_{M}\Big(1+{\|e_{n}\|_{H^1}^2}\Big)\|e_n\|_{L^2}
\end{align*}
for all $\tau \in (0,1]$ and $n \in \N$ with $n\tau \le T$.
\end{Lemma}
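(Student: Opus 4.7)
The key is the cubic factorization $a^3-b^3=(a-b)(a^2+ab+b^2)$. With $\mu u^3$ as the nonlinearity, I would write
\[ g(\pi_{\tau^{-1}}u(t_n))-g(\pi_{\tau^{-1}}u_n)=-\mu\,\pi_{\tau^{-1}}e_n\cdot\bigl[(\pi_{\tau^{-1}}u(t_n))^2+\pi_{\tau^{-1}}u(t_n)\pi_{\tau^{-1}}u_n+(\pi_{\tau^{-1}}u_n)^2\bigr], \]
since the first component of $E_n$ is $e_n$ and $\pi_{\tau^{-1}}$ commutes with the subtraction. The plan is then to estimate by Hölder, pushing the quadratic factor into $L^3$ and the linear factor $\pi_{\tau^{-1}}e_n$ into $L^6$ (for the $L^2$ bound) or $L^2$ (for the $H^{-1}$ bound, using $L^{6/5}\hookrightarrow H^{-1}$). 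Sobolev embedding $H^1\hookrightarrow L^6$ together with $L^6$-boundedness of $\pi_{\tau^{-1}}$ via $H^1$ reduces everything to controlling $\|u(t_n)\|_{H^1}$ and $\|u_n\|_{H^1}$.

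The first is just $\|u(t_n)\|_{H^1}\le M$ by Assumption \ref{AssNew}. The bound on $\|u_n\|_{H^1}$ is the step that forces the appearance of $E_{n-1}$ (and where the exponent $6$ is generated). I would exploit the compact form \eqref{StrangVar}: because the first component of $G$ vanishes, the first component of $U_n$ reads
\[ u_n=\cos(\tau|\nabla|)u_{n-1}+\tau\sinc(\tau|\nabla|)v_{n-1}+\tfrac{\tau^2}{2}\sinc(\tau|\nabla|)\,g(\pi_{\tau^{-1}}u_{n-1}). \]
Both $\cos(\tau|\nabla|)$ and $|\nabla|^{-1}\sin(\tau|\nabla|)$ are uniformly bounded as operators $H^1\to H^1$ and $L^2\to H^1$ respectively (for $\tau\le 1$, with the zero mode absorbed separately). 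Combined with $\|g(\pi_{\tau^{-1}}u_{n-1})\|_{L^2}\lesssim \|\pi_{\tau^{-1}}u_{n-1}\|_{L^6}^3\lesssim \|u_{n-1}\|_{H^1}^3$ via $H^1\hookrightarrow L^6$, this yields
\[ \|u_n\|_{H^1}\lesssim \|U_{n-1}\|_{H^1\times L^2}+\|u_{n-1}\|_{H^1}^3\lesssim_M 1+\|E_{n-1}\|_{H^1\times L^2}^3, \]
after inserting $U_{n-1}=U(t_{n-1})-E_{n-1}$.

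Squaring this estimate produces the factor $1+\|E_{n-1}\|_{H^1\times L^2}^6$. Putting everything together, for the $L^2$ bound I would write
\[ \|g(\pi_{\tau^{-1}}u(t_n))-g(\pi_{\tau^{-1}}u_n)\|_{L^2}\lesssim \|\pi_{\tau^{-1}}e_n\|_{L^6}\bigl(\|\pi_{\tau^{-1}}u(t_n)\|_{L^6}^2+\|\pi_{\tau^{-1}}u_n\|_{L^6}^2\bigr), \]
and analogously with $\|\pi_{\tau^{-1}}e_n\|_{L^2}$ in place of $\|\pi_{\tau^{-1}}e_n\|_{L^6}$ after $L^{6/5}\hookrightarrow H^{-1}$ for the $H^{-1}$ estimate. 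Inserting the $H^1$ bound on $u_n$ then closes both estimates. The only mildly delicate point, and the one I would check most carefully, is the $H^1$-recursion for $u_n$ in the final paragraph, in particular the bound on $\tau\sinc(\tau|\nabla|)$ mapping $L^2$ into $H^1$ uniformly in $\tau\in(0,1]$; everything else is Hölder and Sobolev embedding.
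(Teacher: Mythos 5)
Your proof is correct and follows essentially the same route as the paper: the cubic factorization you use is just the $\alpha=3$ case of the paper's Lipschitz bound \eqref{NichtlinLip}, the Hölder/Sobolev splitting of the quadratic factor into $L^3$ and the error factor into $L^6$ (resp.\ $L^2$ via $L^{6/5}\hookrightarrow H^{-1}$) matches the paper, and the $H^1$-recursion for $u_n$ from the compact form \eqref{StrangVar} is exactly how the paper generates the $1+\|E_{n-1}\|_{H^1\times L^2}^3$ factor before squaring. The point you flag as "mildly delicate" is fine: $\tau\sinc(\tau|\nabla|)=|\nabla|^{-1}\sin(\tau|\nabla|)$ has symbol $(1+|k|^2)^{1/2}\sin(\tau|k|)/|k|$, bounded uniformly for $\tau\in(0,1]$ (with the $k=0$ mode giving $\tau\le 1$), so it maps $L^2\to H^1$ uniformly.
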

\begin{proof}
From \eqref{NichtlinLip} combined with Sobolev and Hölder inequalities, we deduce
\begin{align*}
\|g(\pi_{\tau^{-1}}u(t_n))-g(\pi_{\tau^{-1}}u_n)\|_{L^2} &\lesssim_M \Big(1+\|u_n\|_{H^1}^2\Big)\|e_n\|_{H^1}, \\ 
\|g(\pi_{\tau^{-1}}u(t_n))-g(\pi_{\tau^{-1}}u_n)\|_{H^{-1}} &\lesssim_M \Big(1+\|u_n\|_{H^1}^2\Big)\|e_n\|_{L^2}.
\end{align*}
{The assertion follows from this by inserting $u_n = u(t_n)-e_n$.}
\end{proof}

We can now give the proof of the global error bound for $\alpha=3$. We use a standard procedure based on the discrete Gronwall inequality. The error bound for the $H^1 \times L^2$ norm is inductively exploited to get a uniform control on the numerical solution $U_n$ in $H^1 \times L^2$, which is also essential to obtain the error bound in the $L^2 \times H^{-1}$ norm. This strategy goes back to \cite{Lubich}.

\begin{proof}[Proof of Theorem \ref{ThmStrang} for $\alpha=3$.]
We apply Lemmas \ref{LemBPI}, \ref{LemLokFStrangNew} and \ref{LemStabStrang3} to the formula \eqref{FehlerRekStrangNew}, which gives
\begin{align}\label{eq:StrangIterationH1en}
	{\|e_n\|_{ H^1}} &\lesssim {\|B(n\tau)\|_{H^1 \times L^2} + \| D_n\|_{ H^1 \times L^2}  + \|[ Q_n]_1\|_{H^1}} \nonumber\\
	&\lesssim_{M,T} {\tau|\log\tau| + \tau\sum_{k=1}^{n-1}\Big(1+\|e_k\|_{H^1}^2\Big)\|e_k\|_{ H^1},}\\
	\label{eq:StrangIterationH1}
	\|E_n\|_{H^1 \times L^2} &\lesssim \|B(n\tau)\|_{ H^1 \times L^2} + \| D_n\|_{H^1 \times L^2} +   \|Q_n\|_{H^1 \times L^2} \nonumber\\
	&\lesssim_{M,T} \tau|\log\tau| +  \tau\sum_{k=1}^{n}\Big(1+{\|e_k\|_{ H^1}^2}\Big)\|e_k\|_{ H^1}
\end{align}
and similarly
\begin{align}\label{eq:StrangIterationL2}
	\|E_n\|_{L^2 \times H^{-1}} &\lesssim \|B(n\tau)\|_{L^2 \times H^{-1}} + \| D_n\|_{L^2 \times H^{-1}}+ \|Q_n\|_{L^2 \times H^{-1}} \nonumber\\
	&\lesssim_{M,T} \tau^2|\log\tau| + \tau\sum_{k=1}^{n}\Big(1+{\|e_k\|_{ H^1}^2}\Big)\|e_k\|_{L^2}
\end{align}
for all $\tau\in(0,e^{-1}]$ and $n \in \N_0$ with $n\tau \le T$. Here {we use Remark \ref{Rem:en} for the estimate on $e_n$, and} we exploit that $E_0 = 0$. Let $c=c(M,T)>0$ be maximum of the implicit constants in {\eqref{eq:StrangIterationH1en},} \eqref{eq:StrangIterationH1}, and \eqref{eq:StrangIterationL2}. We then define {the final error constant}
\[C \coloneqq 2ce^{4cT} \]
and choose the maximum step size $\tau_0 \in (0,e^{-1}] $ satisfying
\begin{equation*}
	4c\tau_0 \le 1 , \qquad \tau_0|\log\tau_0|C \le 1 .
\end{equation*}

Let $\tau \in (0,\tau_0]$. {We first show that the bound
	\begin{equation}\label{eq:enboundalpha3}
		\|e_n\|_{H^1} \le 1 
	\end{equation}
	holds for all $n\in \N_0$ with $n\tau \le T$. We proceed by induction on $n$. For $n=0$, \eqref{eq:enboundalpha3} is clear since $e_0=0$.} Let now $n \in \N$ with $n\tau \le T$. {As induction hypothesis, we assume that we already have
	\begin{equation*}
		\|e_k\|_{H^1} \le 1
	\end{equation*}
	for all $k \in \{0,\dots,n-1\}$. 
	Inequality \eqref{eq:StrangIterationH1en} thus yields
	\[ \|e_n\|_{H^1} \le c\tau|\log\tau| + 2c\tau\sum_{k=1}^{n-1}\|e_k\|_{H^1}. \]
	The discrete Gronwall inequality then implies that
	\[ \|e_n\|_{H^1} \le ce^{2cn\tau}\tau|\log\tau| \le C\tau|\log\tau| \le 1,  \]
	using the restriction $\tau \le \tau_0$. Hence, \eqref{eq:enboundalpha3} is true.} 
{From} \eqref{eq:StrangIterationH1} we {now} infer that
\[ \|E_n\|_{H^1 \times L^2} \le c\tau|\log\tau| + 2c\tau\sum_{k=1}^{n}\|e_k\|_{H^1}. \]
Hence, using the step size restriction $2c\tau \le \frac12$, we can absorb the $n$-th term in the above sum to get
\[ \|E_n\|_{H^1 \times L^2} \le 2c\tau|\log\tau| + 4c\tau\sum_{k=1}^{n-1}\|E_k\|_{H^1 \times L^2}. \]
The discrete Gronwall inequality now implies {the desired bound}
\[ \|E_n\|_{H^1 \times L^2} \le 2ce^{4cn\tau}\tau|\log\tau| \le C\tau|\log \tau|,  \]
which concludes the proof of the bound in the energy norm. Similarly, starting from \eqref{eq:StrangIterationL2} we establish {the inequality}
\[ \|E_n\|_{L^2 \times H^{-1}}  \le C\tau^2|\log \tau|, \]
{again using \eqref{eq:enboundalpha3} and the discrete Gronwall lemma.}
\end{proof}

\subsection{Proof of the global error bounds for $\alpha=4$}

In this case, estimates in discrete Strichartz norms are needed to control the term $Q_n$ in \eqref{FehlerRekStrangNew}. The estimate for the solution $u$ is already contained in Proposition \ref{PropStrichUNew}. However, we will also need a corresponding estimate for the approximation $u_n$ which a priori is not clear. 
To this aim, we first show a ``discrete local wellposedness result'' for the scheme \eqref{Strang}. It should be compared to {Proposition} \ref{ThmLokWoh}.

\begin{Lemma}\label{LemDiskrWellp}
	Let $R>0$. Then there is a time $b_0 = b_0(R)>0$ such that for all $U_0 \in H^1 \times L^2$ with $\|U_0\|_{H^1\times L^2} \le R$, the sequence $(U_n)$ defined by \eqref{StrangVar} with $\alpha=4$ satisfies the estimate
	\[\|\pi_{\tau^{-1}}u_n\|_{\ell^6_\tau([0,b_0],L^9)} \lesssim R \]
	for all $\tau \in (0,b_0]$.
\end{Lemma}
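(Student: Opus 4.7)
\emph{Plan.} I would prove Lemma \ref{LemDiskrWellp} by a discrete bootstrap argument that mirrors the continuous fixed-point construction behind Theorem \ref{ThmLokWoh}, with the $L^6_b L^9$ space replaced by its discrete analogue $\ell^6_\tau([0,b],L^9)$. Writing $u_n$ for the first component of $U_n$, I would track the two quantities
\[
  X_N \coloneqq \|\pi_{\tau^{-1}}u_n\|_{\ell^6_\tau([0,N\tau],L^9)}, \qquad Y_N \coloneqq \max_{0\le n\le N}\|U_n\|_{H^1\times L^2},
\]
and aim to establish $X_N\lesssim R$ and $Y_N\lesssim R$ for all $N$ with $N\tau\le b_0$, where $b_0 = b_0(R)$ will be chosen small at the end.

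\emph{Key estimates.} Applying the discrete Strichartz estimate of Corollary \ref{CorDisStrichWave} with the $H^1$-admissible pair $(p,q)=(6,9)$ to the discrete Duhamel formula \eqref{DiskrDuhamelStrang} should yield
\[
  X_N \;\le\; C_1 R \;+\; C_1\,\tau\!\sum_{k=0}^N\|g(\pi_{\tau^{-1}}u_k)\|_{L^2},
\]
and since $e^{\tau A}$ is unitary on $H^1\times L^2$, iterating \eqref{StrangVar} produces the parallel energy bound
\[
  Y_N \;\le\; R \;+\; C_1\,\tau\!\sum_{k=0}^N\|g(\pi_{\tau^{-1}}u_k)\|_{L^2}.
\]
To bound the nonlinear sum, I would split $u^4=u^3\cdot u$ and use H\"older together with the Sobolev embedding $H^1\hookrightarrow L^6$,
\[
  \|g(\pi_{\tau^{-1}}u_k)\|_{L^2} = \|\pi_{\tau^{-1}}u_k\|_{L^8}^4 \le \|\pi_{\tau^{-1}}u_k\|_{L^9}^3\,\|\pi_{\tau^{-1}}u_k\|_{L^6} \lesssim \|\pi_{\tau^{-1}}u_k\|_{L^9}^3\,\|u_k\|_{H^1},
\]
the H\"older exponents matching via $3/9+1/6=1/2$. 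A final H\"older in discrete time gives
\[
  \tau\!\sum_{k=0}^N\|\pi_{\tau^{-1}}u_k\|_{L^9}^3\|u_k\|_{H^1} \;\le\; Y_N\,(N\tau)^{1/2}X_N^3 \;\le\; Y_N\,b_0^{1/2}\,X_N^3,
\]
so that the whole system reduces to $X_N\le C_1R+C_2 b_0^{1/2}Y_NX_N^3$ and $Y_N\le R+C_2 b_0^{1/2}Y_NX_N^3$.

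\emph{Closing the bootstrap.} Fix $b_0=b_0(R)$ so small that $C_2 b_0^{1/2}(2R)(2C_1R)^3<\min(C_1R,R)$. Let $N^{\ast}$ be the smallest index with $N^\ast\tau\le b_0$ for which $X_{N^\ast}>2C_1R$ or $Y_{N^\ast}>2R$ (if any). For $N=N^\ast-1$ the bootstrap hypothesis is satisfied, so the system above gives the strict bounds $X_{N^\ast-1}<\tfrac{3}{2}C_1R$, $Y_{N^\ast-1}<\tfrac{3}{2}R$. To extend these to step $N^\ast$, I would control the per-step jump explicitly from \eqref{StrangVar}: the increment is of the form $\tau\|g(\pi_{\tau^{-1}}u_{N^\ast-1})\|_{L^2}+\tau\|g(\pi_{\tau^{-1}}u_{N^\ast})\|_{L^2}$, and the Bernstein inequality (Lemma \ref{LemBernstein}) combined with $H^{9/8}\hookrightarrow L^8$ gives $\|\pi_{\tau^{-1}}u_N\|_{L^8}\lesssim\tau^{-1/8}\|u_N\|_{H^1}$, hence $\tau\|g(\pi_{\tau^{-1}}u_N)\|_{L^2}\lesssim\tau^{1/2}\|u_N\|_{H^1}^4\lesssim b_0^{1/2}R^4$. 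A further shrinking of $b_0$ makes this jump strictly smaller than the gap $\tfrac{1}{2}R$ (resp.\ $\tfrac{1}{2}C_1R$), contradicting the choice of $N^\ast$, so no such $N^\ast$ exists and the two bounds hold throughout $[0,b_0]$.

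\emph{Main obstacle.} The delicate point is the implicit character of the bootstrap inequalities: the $k=N$ term in the sum makes the right-hand sides depend on $X_N$, $Y_N$ themselves. The workaround is precisely the splitting $u^4=u^3\cdot u$, which lets the Strichartz control absorb the $u^3$ factor through the $H^1$-admissible pair $(6,9)$ while the $u$ factor is handled by the energy via $H^1\hookrightarrow L^6$ (the same trick as in the proof of Proposition \ref{PropStrichUNew}); together with the per-step Bernstein bound this closes the loop on a time horizon $b_0$ depending only on $R$.
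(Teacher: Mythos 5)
Your overall plan — discrete Duhamel, discrete Strichartz with the pair $(6,9)$ via Corollary~\ref{CorDisStrichWave}, the $u^4=u^3\cdot u$ splitting and the $H^1\hookrightarrow L^6$ embedding, and a discrete bootstrap on $[0,b_0]$ — is exactly the paper's approach. However, the per-step closure of your bootstrap has a genuine circularity. To extend the bounds from step $N^*-1$ to step $N^*$ you estimate the new nonlinear contribution by
\[
\tau\|g(\pi_{\tau^{-1}}u_{N^*})\|_{L^2}\lesssim\tau^{1/2}\|u_{N^*}\|_{H^1}^4\lesssim b_0^{1/2}R^4,
\]
but the last inequality requires $\|u_{N^*}\|_{H^1}\lesssim R$, which at step $N^*$ is precisely what you are trying to prove; the bootstrap hypothesis only yields this for $n\le N^*-1$. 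Since the discrete Duhamel formula \eqref{DiskrDuhamelStrang} contains the ``diagonal'' term $\tfrac\tau2\,e^{0\cdot\tau A}G(\Pi_{\tau^{-1}}U_{N^*})$, the right-hand side of your recursion depends on $U_{N^*}$ itself, and this dependence is not resolved by the reasoning you give.

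The fix used in the paper is a structural observation you are not exploiting: since $G(u,v)=(0,g(u))$ has vanishing first component, the second Strang substep does not alter $u$, so the first component $u_{n+1}$ of $U_{n+1}$ depends only on $U_n$. Concretely, from \eqref{Strang},
\[
\|u_{j+1}\|_{H^1}\lesssim\|U_j\|_{H^1\times L^2}+\tau\|g(\pi_{\tau^{-1}}u_j)\|_{L^2}\lesssim\|U_j\|_{H^1\times L^2}+\tau^{1/2}\|u_j\|_{H^1}^4,
\]
which bounds $\|u_{N^*}\|_{H^1}$ by quantities at step $N^*-1$ that the bootstrap hypothesis does control. Feeding this into $\tau\|g(\pi_{\tau^{-1}}u_{j+1})\|_{L^2}\lesssim\tau^{1/2}\|u_{j+1}\|_{H^1}^4$ yields a term of order $\tau^{1/2}$ involving only step-$j$ data, so that the recursion is genuinely one-step-back and the induction closes after shrinking $b_0$. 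With this correction your argument matches the paper's proof; everything else (the Bernstein bound $\|\pi_{\tau^{-1}}u\|_{L^8}\lesssim\tau^{-1/8}\|u\|_{H^1}$, the H\"older/Sobolev manipulations, and the choice of the small time $b_0(R)$) is sound.
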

\begin{proof}
	Let $j \in \N_0$ with $t_{j+1} \le 1$. The discrete Duhamel formula \eqref{DiskrDuhamelStrang} and the discrete Strichartz estimate from Corollary \ref{CorDisStrichWave} imply
	\begin{align}\label{eq:StrangdiskrWellp1}
		&\max\{\|U_n\|_{\ell^\infty_\tau([0,t_{j+1}],H^1\times L^2)},\|\pi_{\tau^{-1}}u_n\|_{\ell^6_\tau([0,t_{j+1}],L^9)}\} \nonumber\\
		&\lesssim \|U_0\|_{H^1\times L^2} + \|g(\pi_{\tau^{-1}}u_n)\|_{\ell^1_\tau([0,t_{j}],L^2)} + \tau\|g(\pi_{\tau^{-1}}u_{j+1})\|_{L^2}.
	\end{align}
	Using Hölder, Sobolev, and Bernstein {inequalities}, we estimate 
	\begin{align*}
		\|g(\pi_{\tau^{-1}}u_n)\|_{\ell^1_\tau([0,t_{j}],L^2)} &\le \|\pi_{\tau^{-1}}u_n\|^3_{\ell^3_\tau([0,t_{j}],L^9)}\|\pi_{\tau^{-1}}u_n\|_{\ell^\infty_\tau([0,t_{j}],L^6)} \\
		&\lesssim t_{j+1}^\frac12 \|\pi_{\tau^{-1}}u_n\|^3_{\ell^6_\tau([0,t_{j}],L^9)}\|u_n\|_{\ell^\infty_\tau([0,t_{j}],H^1)}
	\end{align*}
	as well as
	\begin{align*}
		\tau\|g(\pi_{\tau^{-1}}u_{j+1})\|_{L^2} = \tau \|\pi_{\tau^{-1}}u_{j+1}\|^4_{L^8} \lesssim \tau^\frac12\|u_{j+1}\|^4_{H^1}.
	\end{align*} 
	Similarly, the definition of the scheme \eqref{Strang} leads to
	\begin{align*}
		\|u_{j+1}\|_{H^1} \lesssim \|U_j\|_{H^1\times L^2} + \tau \|g(\pi_{\tau^{-1}}u_j)\|_{L^2} \lesssim \|U_j\|_{H^1\times L^2} + \tau^\frac12\|u_{j}\|^4_{H^1}.
	\end{align*}
	Plugging this into \eqref{eq:StrangdiskrWellp1}, we derive
	\begin{align}\label{eq:StrangdiskrWellp2}
		&\max\{\|U_n\|_{\ell^\infty_\tau([0,t_{j+1}],H^1\times L^2)},\|\pi_{\tau^{-1}}u_n\|_{\ell^6_\tau([0,t_{j+1}],L^9)}\} \nonumber\\
		&\lesssim \|U_0\|_{H^1\times L^2} + t_{j+1}^\frac12 \|\pi_{\tau^{-1}}u_n\|^3_{\ell^6_\tau([0,t_{j}],L^9)}\|u_n\|_{\ell^\infty_\tau([0,t_{j}],H^1)} \nonumber \\
		&\quad + \tau^\frac12\|U_j\|^4_{H^1\times L^2} + \tau^\frac12(\tau^\frac12\|u_{j}\|^4_{H^1})^4.
	\end{align}
	The Bernstein inequality from Lemma \ref{LemBernstein} also gives
	\begin{equation}\label{eq:StrangdiskrWellp3}
		\tau^\frac16\|\pi_{\tau^{-1}}u_0\|_{L^9} \lesssim \|u_0\|_{H^1}.
	\end{equation}
	Let $C$ be the maximum of $1$ and the implicit constants in \eqref{eq:StrangdiskrWellp2} and \eqref{eq:StrangdiskrWellp3}. We choose the time $b_0 \in (0,1]$ such that 
	\begin{equation}\label{Defb0}
	b_0^\frac12(2C)^4R^3 \le \frac13.
	\end{equation}
	We next show via induction that
	\begin{equation}\label{eq:StrangDiskrWellpInd}
		\max\{\|U_n\|_{\ell^\infty_\tau([0,t_{j}],H^1\times L^2)},\|\pi_{\tau^{-1}}u_n\|_{\ell^6_\tau([0,t_{j}],L^9)}\} \le 2CR
	\end{equation}
	for all $j \in \N_0$ with $t_j \le b_0$. For $j=0$, the claim follows from \eqref{eq:StrangdiskrWellp3}. Assume now that \eqref{eq:StrangDiskrWellpInd} holds for some $j \in \N_0$ with $t_{j+1} \le b_0$. Estimate \eqref{eq:StrangdiskrWellp2} and \eqref{Defb0} then imply
	\begin{align*}
		&\max\{\|U_n\|_{\ell^\infty_\tau([0,t_{j+1}],H^1\times L^2)},\|\pi_{\tau^{-1}}u_n\|_{\ell^6_\tau([0,t_{j+1}],L^9)}\} \nonumber\\
		&\le C[ R + b_0^\frac12 (2CR)^4+ \tau^\frac12(2CR)^4 + \tau^\frac12(\tau^\frac12(2CR)^4)^4] \le 2CR
	\end{align*}
	for all $\tau \in (0,b_0]$, which ends the proof.
\end{proof}

Using the previous lemma, we can now give an estimate for $Q_n$ on a possibly small time interval of fixed size, under the assumption that we have control on the $H^1 \times L^2$ norm of the starting value $U_0$ of the numerical scheme.

\begin{Lemma}\label{LemStabStrang4}
	Let $U$, $T$, and $M$ be given by Assumption \ref{AssNew} with $\alpha =4$. Let moreover $R>0$ and $U_0 \in H^1 \times L^2$ with $\|U_0\|_{H^1\times L^2} \le R$. Define $U_n$ by \eqref{StrangVar}, $E_n$ and $Q_n$ by Proposition \ref{PropFehlerRekStrangNew} and $b_0(R)$ by Lemma \ref{LemDiskrWellp}. Then for any time $b>0$ with $b \le \min\{b_0,T\}$, we obtain
	\begin{align*}
		\|Q_n\|_{\ell^\infty_\tau([0,b],H^1 \times L^2)} &\lesssim_{M,T,R} b^{\frac12}\|E_n\|_{\ell^\infty_\tau([0,b],H^1 \times L^2)}, \\
		\|Q_n\|_{\ell^\infty_\tau([0,b],L^2 \times H^{-1})} &\lesssim_{M,T,R} b^{\frac12}\|E_n\|_{\ell^\infty_\tau([0,b],L^2 \times H^{-1})},
	\end{align*}
	for all $\tau \in (0,b]$.
\end{Lemma}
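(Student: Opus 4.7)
The plan is to estimate $Q_n$ by moving the evolution operators outside the norm, which is allowed because $e^{t A}$ is uniformly bounded on every Sobolev space $H^{r} \times H^{r-1}$. Setting $G_k \coloneqq g(\pi_{\tau^{-1}}u(t_k))-g(\pi_{\tau^{-1}}u_k)$ and using only the second component of the vector $(0, G_k)^\top$, this gives
\[
\|Q_n\|_{H^1\times L^2} \lesssim \tau\sum_{k=0}^{n} \|G_k\|_{L^2}, \qquad \|Q_n\|_{L^2\times H^{-1}} \lesssim \tau\sum_{k=0}^{n} \|G_k\|_{H^{-1}}.
\]
From the Lipschitz-type bound \eqref{NichtlinLip} with $\alpha=4$ and the uniform boundedness of $\pi_{\tau^{-1}}$ on Lebesgue spaces, we get the pointwise estimate $|G_k| \lesssim (|\pi_{\tau^{-1}}u(t_k)|^3 + |\pi_{\tau^{-1}}u_k|^3)|\pi_{\tau^{-1}}e_k|$.

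Next, I would apply Hölder in space with the split $\tfrac12 = \tfrac13 + \tfrac16$ together with the Sobolev embedding $H^1 \hookrightarrow L^6$ to obtain
\[
\|G_k\|_{L^2} \lesssim \bigl(\|\pi_{\tau^{-1}}u(t_k)\|^3_{L^9} + \|\pi_{\tau^{-1}}u_k\|^3_{L^9}\bigr)\|e_k\|_{H^1}.
\]
For the weaker norm, use $L^{6/5}\hookrightarrow H^{-1}$ and Hölder with $\tfrac{5}{6} = \tfrac13 + \tfrac12$, which yields the analogous bound with $\|e_k\|_{L^2}$ in place of $\|e_k\|_{H^1}$. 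Both spatial exponents are chosen precisely so that the nonlinear factor sits in the $H^1$-admissible Strichartz norm $L^9$ (paired with $p=6$).

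The last step is Hölder in time with the split $\ell^1_{\tau,b} \subset \ell^2_{\tau,b} \cdot \ell^2_{\tau,b}$, placing the cubic factor in $\ell^2_\tau$ (so that $\|\|\cdot\|_{L^9}^3\|_{\ell^2_\tau} = \|\cdot\|_{\ell^6_\tau L^9}^3$) and the error in $\ell^2_\tau$, then trivially $\|e_k\|_{\ell^2_{\tau}([0,b],\,H^1)} \le (2b)^{1/2}\|e_k\|_{\ell^\infty_\tau([0,b],\,H^1)}$. This produces
\[
\|Q_n\|_{\ell^\infty_\tau([0,b],\,H^1\times L^2)} \lesssim b^{\tfrac12}\bigl(\|\pi_{\tau^{-1}}u\|^3_{\ell^6_\tau([0,b],L^9)} + \|\pi_{\tau^{-1}}u_n\|^3_{\ell^6_\tau([0,b],L^9)}\bigr)\|E_n\|_{\ell^\infty_\tau([0,b],\,H^1\times L^2)},
\]
and analogously in the $L^2 \times H^{-1}$ norm. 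Finally, the factor $\|\pi_{\tau^{-1}}u\|_{\ell^6_\tau L^9}$ is bounded by Proposition \ref{PropStrichUNew}, while $\|\pi_{\tau^{-1}}u_n\|_{\ell^6_\tau L^9}$ is bounded by Lemma \ref{LemDiskrWellp}.

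The only genuine obstacle is the discrete-time Strichartz norm of the \emph{numerical} solution: this is the reason the lemma has to be stated on a short window $[0,b]$ with $b \le b_0(R)$, since the a priori control on $\|\pi_{\tau^{-1}}u_n\|_{\ell^6_\tau L^9}$ furnished by Lemma \ref{LemDiskrWellp} is only available on such a window and depends on the \emph{initial} size $R$ of $U_0$ rather than on $M$. Everything else is classical Hölder and Sobolev bookkeeping.
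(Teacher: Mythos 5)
Your proof is correct and follows essentially the same route as the paper: move $e^{(n-k)\tau A}$ out, apply the Lipschitz bound \eqref{NichtlinLip}, use Hölder in space with $\frac12=\frac13+\frac16$ (resp.\ $\frac56=\frac13+\frac12$ and $L^{6/5}\hookrightarrow H^{-1}$) to isolate the cubic factor in $L^3$, then Hölder in time to land the cubic factor in $\ell^6_\tau L^9$ with a $b^{1/2}$ gain, controlled via Proposition~\ref{PropStrichUNew} and Lemma~\ref{LemDiskrWellp}. The only cosmetic difference is where you harvest the $b^{1/2}$ (via $\ell^1\subset\ell^2\cdot\ell^2$ on the error rather than $\ell^1_{\tau,b}\subset b^{1/2}\ell^2_{\tau,b}$ on the cubic factor), which is immaterial.
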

\begin{proof}
	We estimate
	\begin{align*}
		&\|Q_n\|_{\ell^\infty_\tau([0,b],H^1 \times L^2)} \\
		&\lesssim_{T} \|g(\pi_{\tau^{-1}}u(t_n))-g(\pi_{\tau^{-1}}u_n)\|_{\ell^1_\tau([0,b ],L^2)} \\
		&\lesssim \||g'(\pi_{\tau^{-1}}u(t_n))|+|g'(\pi_{\tau^{-1}}u_n)|\|_{\ell^1_\tau([0,b ],L^3)}\|u(t_n)-u_n\|_{\ell^\infty_\tau([0,b ],L^6)} \\
		&\lesssim b^\frac12\Big(\|\pi_{\tau^{-1}}u(t_n)\|^3_{\ell^6_\tau([0,b ],L^9)}+\|\pi_{\tau^{-1}}u_n\|^3_{\ell^6_\tau([0,b ],L^9)} \Big)\|E_n\|_{\ell^\infty_\tau([0,b ],H^1\times L^2)} \\
		&\lesssim_{M,T,R} b^\frac12 \|E_n\|_{\ell^\infty_\tau([0,b ],H^1\times L^2)},
	\end{align*}
	using the estimates for the discrete Strichartz norm from Proposition \ref{PropStrichUNew} and Lemma \ref{LemDiskrWellp}. The other claim follows similarly.
\end{proof}

We now show the global error bound for $\alpha=4$. Unlike as for the case $\alpha=3$, it is not enough to use the discrete Gronwall lemma. Instead, we need to apply Lemma \ref{LemStabStrang4} iteratively on the possibly small intervals $[0,T_1]$, $[T_1, 2T_1]$ and so on, where we reach the final time $T$ after finitely many iterations. Similar as in the case $\alpha=3$, the uniform boundedness of the numerical solution in $H^1 \times L^2$ (which is needed to apply Lemma \ref{LemStabStrang4}) follows from the error bound for this norm. This method goes back to \cite{IgnatSplitting,ChoiKoh,ORS} in the context of Schrödinger equations.

\begin{proof}[Proof of Theorem \ref{ThmStrang} for $\alpha=4$.]
	We set $R \coloneqq M+1$ and define $b_0 = b_0(R)$ from Lemma \ref{LemDiskrWellp}. Formula \eqref{FehlerRekStrangNew} and Lemmas \ref{LemBPI}, \ref{LemLokFStrangNew}, and \ref{LemStabStrang4} yield
	\begin{align}\label{eq:Rekalpha4H1}
		\|E_n\|_{\ell^\infty_\tau([t_j,t_j+b],H^1\times L^2)} &\lesssim_{M,T} \|E_j\|_{H^1\times L^2} + \tau^\frac12 + b^\frac12\|E_n\|_{\ell^\infty_\tau([t_j,t_j+b],H^1\times L^2)}, \\
	\label{eq:Rekalpha4L2}
		\|E_n\|_{\ell^\infty_\tau([t_j,t_j+b],L^2 \times H^{-1})} &\lesssim_{M,T} \!  \|E_j\|_{L^2\times H^{-1}} + \tau^\frac32 + b^\frac12\|E_n\|_{\ell^\infty_\tau([t_j,t_j+b],L^2\times H^{-1})}
	\end{align} 
 	for all $j \in \N_0$ and $b \in (0,b_0]$ which satisfy $\|U_j\|_{H^1\times L^2} \le R$ and $j\tau+b \le T$.
	Let $c=c(M,T)$ be the maximum of $1$ and the implicit constants from \eqref{eq:Rekalpha4H1} and \eqref{eq:Rekalpha4L2}. We define the time $T_1 \in (0,T]$ by 
	\begin{equation}\label{DefT1New}
		T_1 \coloneqq \min\{T,b_0,c^{-2}\}.
	\end{equation}
	Moreover, we set $L\coloneqq \lceil \frac{2T}{T_1} \rceil \in \N$ and define the final error constant $C \coloneqq 2(2c)^{L+1}$ and the maximum step size $\tau_0>0$ by 
	\begin{equation}\label{Deftau0New}
		\tau_0 \coloneqq \min\{T_1,C^{-2}\}.
	\end{equation}
	For a step size $\tau \in (0,\tau_0]$, we define the indices $N\coloneqq \lfloor T/\tau \rfloor \in \N$, $N_1\coloneqq \lfloor T_1/\tau \rfloor \in \{1,\dots,N\}$, and $N_m \coloneqq mN_1$ for all $m \in \N_0$. These definitions imply that $N \le N_L$. In addition, we define the quotient $\ell \coloneqq \lfloor N/N_1\rfloor \in \{1,\dots,L\}$. This gives the decomposition
	\[[0,t_N] = \bigcup_{m=0}^{\ell-1}[t_{N_m},t_{N_{m+1}}] \cup [t_{N_\ell},t_N] \eqqcolon \bigcup_{m=0}^{\ell}J_m, \]
	where each interval $J_m$ has length less or equal $T_1$. To measure the error in each of these intervals, we set
	$\mathrm{Err}_m \coloneqq \|E_n\|_{\ell^\infty_\tau(J_m,H^1\times L^2)}$ for $m \in \{-1,\dots,\ell\}$, where $J_{-1} \coloneqq \{0\}$.
	We aim to show the recursion formula
	\begin{equation}\label{ErrRekursionNew}
		\mathrm{Err}_{m} \le 2c(\mathrm{Err}_{m-1} + \tau^\frac12).
	\end{equation}
	Note that once \eqref{ErrRekursionNew} is proved for all indices in $\{0,\dots,m\}$, one can derive the absolute bound
	\begin{equation}\label{ErrAbsNew}
		\mathrm{Err}_m \le 2c\tau^\frac12\sum_{k=0}^m (2c)^k = 2c\tau^\frac12\frac{(2c)^{m+1}-1}{2c-1} \le 2(2c)^{L+1}\tau^\frac12 = C\tau^\frac12\le 1,
	\end{equation}
	using $\mathrm{Err}_{-1} = 0$ and the step size restriction $\tau \le \tau_0$ from \eqref{Deftau0New}. 
	
	Let us now fix an index $m \in \{0,\dots,\ell\}$. If $m>0$ we assume that the inequality \eqref{ErrRekursionNew} holds for all indices in $\{0,\dots,m-1\}$. From \eqref{ErrAbsNew} we get that
	$\mathrm{Err}_{m-1}\le 1$,
	and thus 
	\[ \|U_{N_m}\|_{H^1 \times L^2} \le \|U(t_{N_m})\|_{H^1 \times L^2} + \|E_{N_m}\|_{H^1 \times L^2} \le M+\mathrm{Err}_{m-1} \le M+1 = R. \]
	Estimate \eqref{eq:Rekalpha4H1} and the definition \eqref{DefT1New} of $T_1$ then imply
	\begin{equation*}
		\mathrm{Err}_m \le c\|E_{N_m}\|_{H^1 \times L^2} + c\tau^\frac12 + cT_1^{\frac12}\mathrm{Err}_m \le c\mathrm{Err}_{m-1} + c\tau^\frac12 + \frac12 \mathrm{Err}_m.
	\end{equation*}
Hence, the recursion \eqref{ErrRekursionNew} and the bound \eqref{ErrAbsNew} are true for all $m \in \{0,\dots,\ell\}$. This finishes the proof of the bound in the energy norm. Similarly, starting from \eqref{eq:Rekalpha4L2} we obtain the recursion formula
\[ \|E_n\|_{\ell^\infty_\tau(J_m,L^2 \times H^{-1})} \le 2c(\|E_n\|_{\ell^\infty_\tau(J_{m-1},L^2 \times H^{-1})} + \tau^\frac32) \]
for all $m \in \{0,\dots,\ell\}$, which yields the estimate
\[ \|E_n\|_{\ell^\infty_\tau([0,T],L^2 \times H^{-1})} \le C\tau^\frac32 \]
as in \eqref{ErrAbsNew}.
\end{proof}

\section{Full discretization}\label{SecFullyDiscr}	

\subsection{Proof of the fully discrete error bound}

The proof of Theorem \ref{ThmStrangFully} is very similar to that of Theorem \ref{ThmStrang}, such that we only highlight the differences. These mainly come from the introduction of the trigonometric interpolation operator $\mathcal I_K$ in \eqref{StrangFully}.

	\begin{Definition}\label{DefTrigInt}
		Let $N \ge 1$ and $f \in C(\T^3)$. We define the trigonometric interpolation $I_N f$ as the trigonometric polynomial
		\[(I_Nf)(x) \coloneqq (2\pi)^{-\frac 32} \sum_{|k|_\infty \le N}\tilde f_{k,N} e^{\iu k \cdot x}, \qquad x \in \T^3,\]
		where the coefficients $\tilde f_{k,N}$ are given by the discrete Fourier transform
		\[\tilde f_{k,N} \coloneqq  (2\pi)^\frac32  (2N+1)^{-3}  \sum_{|j|_\infty \le N} f\Big(\frac {2\pi j}{2N+1}  \Big) e^{-\iu \frac {2\pi j} {2N+1}  \cdot k}. \]
		We moreover set $\mathcal I_N \coloneqq \operatorname{diag}(I_N,I_N)$. 
	\end{Definition}

We need the following well-known generalization of Bernstein's inequality to the $L^q$ setting, see, e.g., inequality (5.2) in \cite{Guo}.

\begin{Lemma}\label{LemBernsteinLq}
	The estimate
	\[ \|\pi_K f\|_{W^{1,q}} \lesssim K \|\pi_K f\|_{L^q} \]
	holds for all $q \in [1,\infty]$, $f \in \mathcal D'(\T^3)$, and $K \ge 1$.
\end{Lemma}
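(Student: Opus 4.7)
The claim $\|\pi_K f\|_{W^{1,q}} \lesssim K \|\pi_K f\|_{L^q}$ reduces, once the trivial bound $\|\pi_K f\|_{L^q} \le K\|\pi_K f\|_{L^q}$ for the $L^q$-part of the $W^{1,q}$-norm is absorbed, to proving
\[
\|\partial_j \pi_K f\|_{L^q(\T^3)} \lesssim K \|\pi_K f\|_{L^q(\T^3)}, \qquad j \in \{1,2,3\},
\]
uniformly in $q \in [1,\infty]$ and $K \ge 1$. The plan is the classical Bernstein-kernel argument: realize $\partial_j \pi_K$ on functions of frequency support $|k|_\infty \le K$ as convolution against a periodic kernel whose $L^1(\T^3)$-norm grows only like $K$.

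Set $g := \pi_K f$. Fix once and for all a cutoff $\chi \in C_c^\infty(\R^3)$ with $\chi \equiv 1$ on $[-1,1]^3$ and $\supp \chi \subseteq [-2,2]^3$. Since $\hat g_k = 0$ whenever $|k|_\infty > K$, and $\chi(k/K) = 1$ on the support of $\hat g$, we may insert $\chi(k/K)$ freely in the Fourier expansion of $\partial_j g$, yielding the identity $\partial_j g = K \, \Phi_K * g$ on $\T^3$, where $\Phi_K$ is the periodic distribution with Fourier coefficients proportional to $\mathrm{i} (k_j/K) \chi(k/K)$.

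The heart of the proof is the uniform bound $\|\Phi_K\|_{L^1(\T^3)} \lesssim 1$. To obtain it, set $m(\xi) := \mathrm{i} \xi_j \chi(\xi)$, which lies in $C_c^\infty(\R^3)$, and let $\tilde h := \mathcal F^{-1} m \in \mathcal S(\R^3)$. The scaled Fourier multiplier $m(\cdot/K)$ on $\R^3$ corresponds to convolution with $K^3 \tilde h(K\,\cdot)$; a Poisson summation (in the sense of tempered distributions) then identifies $\Phi_K$, up to an explicit constant, with the periodization
\[
\Phi_K(x) = c\, K^3 \sum_{n \in \Z^3} \tilde h\bigl(K(x + 2\pi n)\bigr).
\]
Substituting $y = Kx$ and using that the shifted cubes $\{[-K\pi, K\pi]^3 + 2\pi K n\}_{n \in \Z^3}$ tile $\R^3$ gives
\[
\|\Phi_K\|_{L^1(\T^3)} \le c\, \|\tilde h\|_{L^1(\R^3)} \lesssim 1,
\]
uniformly in $K \ge 1$. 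Plugging this into Young's convolution inequality on $\T^3$ yields
\[
\|\partial_j g\|_{L^q} = K \|\Phi_K * g\|_{L^q} \le K\, \|\Phi_K\|_{L^1}\, \|g\|_{L^q} \lesssim K \|g\|_{L^q},
\]
as desired.

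The only delicate point is the bookkeeping of the Fourier conventions in the Poisson summation step; once the identification of $\Phi_K$ as the periodization of the rescaled Schwartz function $K^3 \tilde h(K\,\cdot)$ is set up cleanly, the $L^1$-bound is essentially a change of variables and the rest is Young's inequality. This is exactly the standard proof of the $L^q$-Bernstein inequality referenced as \cite[inequality (5.2)]{Guo}.
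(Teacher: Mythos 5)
The paper does not actually prove this lemma: it is stated as a ``well-known generalization of Bernstein's inequality'' and the proof is a bare citation to inequality (5.2) in \cite{Guo}. Your proposal, by contrast, supplies a self-contained argument, and it is correct. The strategy --- insert a smooth Fourier cutoff $\chi(k/K)$ that is identically $1$ on the frequency support of $\pi_K f$, realize $\partial_j\pi_K$ as $K$ times convolution against the periodization $\Phi_K$ of the dilated Schwartz kernel $K^3\tilde h(K\cdot)$ via Poisson summation, bound $\|\Phi_K\|_{L^1(\T^3)}\le\|\tilde h\|_{L^1(\R^3)}\lesssim1$ by unfolding the periodization and changing variables, and close with Young's convolution inequality on $\T^3$ --- is exactly the classical Bernstein-kernel proof, works uniformly for all $q\in[1,\infty]$ as required, and the absorption of the $L^q$-part of the $W^{1,q}$-norm using $K\ge1$ is handled correctly. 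The only thing the paper ``buys'' by citing instead of proving is brevity; your version makes the uniformity in $q$ and $K$ transparent, which is the content being invoked.
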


We further need an estimate for the trigonometric interpolation error in $L^q$-based Sobolev spaces. For a proof, see Corollary 3 of \cite{Hristov}, Theorem 1 of \cite{AlexandrovPopov}, and Lemma 3 of \cite{PopovHristov}.

\begin{Lemma}\label{LemTrigIntError}
	Let $q \in (1,\infty)$. We then have the inequality
	\[ \|(I-I_K)f\|_{L^q} \lesssim_q \sum_{m=1}^3 K^{-m} \|f\|_{W^{m,q}} \]
	for all $f \in W^{3,q}$.
\end{Lemma}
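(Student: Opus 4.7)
The plan is to derive the bound through the aliasing formula for trigonometric interpolation on $\T^3$, combined with classical $L^q$-multiplier theory. Since $q > 1$, the Sobolev embedding $W^{3,q}(\T^3) \hookrightarrow C(\T^3)$ guarantees that pointwise evaluation on the grid is meaningful, so the coefficients $\tilde f_{k,K}$ of Definition \ref{DefTrigInt} are well-defined. A direct application of the discrete orthogonality relations for exponentials on the grid yields the aliasing identity
\begin{equation*}
\tilde f_{k,K} = \sum_{\ell \in \Z^3} \hat f_{k+(2K+1)\ell}, \qquad |k|_\infty \le K,
\end{equation*}
which gives the decomposition $(I - I_K) f = (I - \pi_K) f - R_K f$ with aliased remainder $R_K f = \sum_{\ell \ne 0} \pi_K(\ex^{-\iu (2K+1)\ell \cdot x} f)$.

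For the projection part $(I - \pi_K) f$, I would invoke classical multiplier theory: the square cutoff $\pi_K$ is a tensor product of three one-dimensional cutoffs, each bounded on $L^q(\T)$ for $q \in (1,\infty)$ by M.~Riesz's theorem on the conjugate function. Hence $\pi_K$ is bounded on $L^q(\T^3)$ uniformly in $K$ with constant depending only on $q$. An $L^q$-analog of Lemma \ref{LemPiNew} then produces $\|(I - \pi_K) f\|_{L^q} \lesssim_q K^{-m} \|f\|_{W^{m,q}}$ for each $m \in \{1,2,3\}$.

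The main obstacle is the aliased remainder $R_K f$. Using a shifted Bessel multiplier argument---writing $\pi_K(\ex^{-\iu(2K+1)\ell \cdot x} f)$ as a modulated copy of $M_\eta \pi_K M_{-\eta}(I-\Delta)^{-m/2} (I-\Delta)^{m/2} f$ with $\eta = (2K+1)\ell$ and exploiting that on the Fourier support of the shifted cutoff one has $|k+\eta|_\infty \gtrsim K|\ell|_\infty$---one obtains the per-$\ell$ bound $\|\pi_K(\ex^{-\iu(2K+1)\ell \cdot x} f)\|_{L^q} \lesssim_q (K|\ell|)^{-m}\|f\|_{W^{m,q}}$ for $m \in \{1,2,3\}$. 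The sum $\sum_{\ell \ne 0} |\ell|^{-m}$ over $\Z^3$ converges only for $m > 3$, so merely taking $m = 3$ and summing is borderline divergent. Closing the estimate at the critical index $m = d = 3$ is the delicate point, and it is achieved in the cited works of Hristov, Aleksandrov--Popov, and Popov--Hristov via a Peano-kernel representation of the interpolation error combined with an $L^q$ maximal-function bound. This produces precisely the stratified combination $\sum_{m=1}^3 K^{-m}\|f\|_{W^{m,q}}$, with each term capturing a different dyadic scale of frequencies in the aliased remainder; the appearance of three distinct Sobolev norms on the right-hand side---rather than a single $K^{-3}\|f\|_{W^{3,q}}$ bound---is forced by the geometry of the aliasing lattice in $\Z^3$.
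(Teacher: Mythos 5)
The paper does not actually prove Lemma \ref{LemTrigIntError}: it points to Corollary 3 of \cite{Hristov}, Theorem 1 of \cite{AlexandrovPopov}, and Lemma 3 of \cite{PopovHristov}, so there is nothing in the paper itself to compare your argument against. Your proposal, however, is not a proof either: you set up a reasonable framework and then explicitly acknowledge that you cannot close the key estimate, deferring to the same references as a black box.

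The genuine gap is in the treatment of the aliased remainder $R_K f$. The aliasing identity, the split $(I-I_K)f=(I-\pi_K)f - R_K f$, and the modulated shifted-box-projection representation of $R_K f$ are all correct, and the per-block bound $\|\pi_K(\ex^{-\iu(2K+1)\ell\cdot x}f)\|_{L^q}\lesssim_q (K|\ell|)^{-m}\|f\|_{W^{m,q}}$ is reachable via a Mikhlin-type argument for the shifted Bessel multiplier. But you then sum over $\ell\in\Z^3\setminus\{0\}$ with the triangle inequality, and $\sum_{\ell\neq 0}|\ell|^{-m}$ diverges for every $m\le 3$, which is all that $f\in W^{3,q}$ can supply. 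Note also that invoking $m=1,2,3$ separately is not a repair: each of the three series diverges in $d=3$, so the stratified right-hand side $\sum_{m=1}^3 K^{-m}\|f\|_{W^{m,q}}$ does not emerge from this route. The aliasing-plus-$\ell^1$ strategy intrinsically loses a derivative in dimension three; one needs $m\ge 4$ for convergence. Any fix has to beat the triangle inequality over the aliasing lattice, for instance through a square-function bound exploiting the disjointness of the aliased frequency boxes (Rubio de Francia) or through the Peano-kernel/maximal-function machinery you cite — but you do not supply such a step, so the argument does not close. A minor further point: the ``$L^q$-analog of Lemma \ref{LemPiNew}'' you invoke for the projection error is not immediate, since the multiplier $\mathbbm{1}_{\{|k|_\infty>K\}}K/|k|$ is not obviously a uniform-in-$K$ $L^q$-multiplier for $q\ne 2$; the bound $\|(I-\pi_K)f\|_{L^q}\lesssim_q K^{-m}\|f\|_{W^{m,q}}$ is correct but is usually obtained via Jackson's theorem plus the uniform $L^q$-boundedness of $\pi_K$ (a Lebesgue-type argument), not by directly transcribing the $L^2$ multiplier identity.
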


These two results can be combined to the following estimates, which are used below with $c = \alpha$.\footnote{This is not possible if one has a general nonlinearity as outlined in Remark \ref{RemGeneralAlpha}. In that case, the proof of Theorem \ref{ThmStrangFully} becomes slightly more complex. In particular, it would be necessary to write out the $W^{3,q}$ norm appearing in Lemma \ref{LemTrigIntError} using product and chain rules, then apply nonlinear product estimates, and only afterwards use Lemma \ref{LemBernsteinLq}.}

\begin{Lemma}\label{LemTrigIntStab}
	Let $q \in (1,\infty)$ and $c \ge 1$. Then the estimates
	\begin{align*}
	\|(I-I_K) \pi_{cK}f\|_{L^q} &\lesssim_{q,c} K^{-1}\|\pi_{cK}f\|_{W^{1,q}}, \\	
	\|I_K \pi_{cK}f\|_{L^q} &\lesssim_{q,c} \|\pi_{cK}f\|_{L^q},
	\end{align*}
	hold for all $f \in \mathcal D'(\T^3)$ and $K \ge 1$.
\end{Lemma}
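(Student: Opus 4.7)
Both bounds are obtained by combining Lemma~\ref{LemTrigIntError} (interpolation error) with Lemma~\ref{LemBernsteinLq} (Bernstein's $L^q$-inequality). The key observation is that if $g \coloneqq \pi_{cK}f$, then every derivative $\partial^\beta g$ is still Fourier-supported in $\{|k|_\infty \le cK\}$, so iterated Bernstein allows us to trade powers of $cK$ for derivatives.

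For the first estimate, I would first apply Lemma~\ref{LemTrigIntError} to $g = \pi_{cK}f$ to get
\[
\|(I-I_K)\pi_{cK}f\|_{L^q} \lesssim_q \sum_{m=1}^3 K^{-m} \|\pi_{cK}f\|_{W^{m,q}}.
\]
Then, for each $m \in \{1,2,3\}$, I would use the iterated Bernstein inequality
\[
\|\pi_{cK}f\|_{W^{m,q}} \lesssim_c (cK)^{m-1}\|\pi_{cK}f\|_{W^{1,q}},
\]
which follows by applying Lemma~\ref{LemBernsteinLq} $m-1$ times to $\pi_{cK}f$ (at each step, the function inside is still frequency-localized to $|k|_\infty \le cK$, so Bernstein costs a factor $cK$). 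Combining these two displays, the factors $K^{-m}(cK)^{m-1} = c^{m-1}K^{-1}$ are independent of $m$ up to the constant $c$, which yields the claimed bound.

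For the second estimate, I would split $I_K\pi_{cK}f = \pi_{cK}f - (I-I_K)\pi_{cK}f$, apply the first estimate to the interpolation error, and then invoke Lemma~\ref{LemBernsteinLq} once more to absorb the $W^{1,q}$ norm:
\[
\|(I-I_K)\pi_{cK}f\|_{L^q} \lesssim_{q,c} K^{-1}\|\pi_{cK}f\|_{W^{1,q}} \lesssim_c \|\pi_{cK}f\|_{L^q}.
\]
The triangle inequality then gives the desired $L^q$-stability of $I_K$ on $\pi_{cK}f$.

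No substantive obstacle is expected: the whole argument is a direct combination of the two cited lemmas, and the main bookkeeping issue is merely to track that the $c$-dependence remains under control (which it does, because only finitely many powers $c^{m-1}$ with $m\le 3$ appear).
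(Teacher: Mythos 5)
Your proof is correct and matches the paper's intended route: the paper simply states ``These two results can be combined to the following estimates,'' and that combination is exactly what you carry out. The iterated Bernstein step (bounding $\|\pi_{cK}f\|_{W^{m,q}}$ by $(cK)^{m-1}\|\pi_{cK}f\|_{W^{1,q}}$, using that derivatives preserve the frequency localization of $\pi_{cK}f$) and the triangle-inequality reduction of the second estimate to the first are precisely the implicit details; no gaps.
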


\begin{proof}[Proof of Theorem \ref{ThmStrangFully}]
	We use the decomposition 
	\[ \|U(t_n) - U_n^K\|_{L^2 \times H^{-1}} \le \|(I-\Pi_K)U(t_n)\|_{L^2 \times H^{-1}} + \|\Pi_KU(t_n)-U_n^K\|_{L^2 \times H^{-1}}. \]
	Thanks to Lemma \ref{LemPiNew},
	\[ \|(I-\Pi_K)U(t_n)\|_{L^2 \times H^{-1}} \lesssim_M K^{-1}.  \]
	It thus remains to estimate $E_n^K \coloneqq \Pi_KU(t_n)-U_n^K$. We define $N \coloneqq \min\{K,\tau^{-1}\}$. Analogously to Proposition \ref{PropFehlerRekStrangNew}, we decompose
	\begin{equation}\label{eq:FullyDiscErrRek}
		E_n^K = e^{n\tau A }E_0^K + \Pi_K \tilde B(t_n) + \Pi_K \tilde D_n +  \tilde Q_n + \Pi_K\tilde P_n 
	\end{equation} 
	where the terms $\tilde B$ and $\tilde D_n$ are defined in the same way as $B$ and $D_n$ from \eqref{DefTermeStrangNew}, but with $\Pi_N$ instead of $\Pi_{\tau^{-1}}$.
	The other terms are defined by
	\begin{align*}
	\tilde Q_n &\coloneqq \tau \sum_{k=0}^n {c_{k,n}} e^{(n-k)\tau A} \mathcal I_K [G(\Pi_NU(t_k))-G(\Pi_NU_k^K)], \\
	\tilde P_n &\coloneqq  \tau\sum_{k=0}^n{c_{k,n}}e^{(n-k)\tau A} (I-\mathcal I_K)G(\Pi_NU(t_k)).
	\end{align*}
	The term $\tilde B$ is bounded using Lemma \ref{LemBPI} with $N^{-1}$ instead of $\tau$.
	For the term $\tilde D_n$, we get the same estimates as for $D_n$ from Lemma \ref{LemLokFStrangNew}, since the additional $\pi_K$ inside the nonlinearity does not affect the argument. 
	For $\tilde Q_n$, it is also possible to get the same estimates as for $Q_n$ from Lemmas \ref{LemStabStrang3} and \ref{LemStabStrang4}. Here one uses the frequency localization $g(\pi_Nu) = \pi_{\alpha K}g(\pi_Nu)$ and the second inequality from Lemma \ref{LemTrigIntStab} to get rid of the interpolation operator $I_K$. We still need to deal with the term $\tilde P_n$, which contains the interpolation error. For the sake of brevity, we only give the details for $\alpha=4$, since the easier case $\alpha=3$ is treated similarly. We estimate
	\begin{align*}
		\|\tilde P_n\|_{\ell^\infty_\tau([0,T],H^1 \times L^2)} &\lesssim_T \|(I-I_K)g(\pi_Nu(t_n))\|_{\ell^1_\tau([0,T],L^2)} \\
		&\lesssim K^{-1}\|g(\pi_Nu(t_n))\|_{\ell^1_\tau([0,T],H^1)} \\
		&\lesssim K^{-1} \|g'(\pi_Nu(t_n))\|_{\ell^1_\tau([0,T],L^6)}\||\pi_N\nabla u(t_n)|\|_{\ell^\infty_\tau([0,T],L^3)} \\
		&\lesssim K^{-\frac12}\|\pi_Nu(t_n)\|^3_{\ell^3_\tau([0,T],L^{18})} \||\pi_N\nabla u(t_n)|\|_{\ell^\infty_\tau([0,T],L^2)} \\
		 &\lesssim_{M,T} K^{-\frac12}
	\end{align*}
	using Lemmas \ref{LemTrigIntStab} and \ref{LemBernstein} and Proposition \ref{PropStrichUNew}. Similarly, we obtain
	\begin{align*}
		\|\tilde P_n\|_{\ell^\infty_\tau([0,T],L^2 \times H^{-1})} &\lesssim_T \|(I-I_K)g(\pi_Nu(t_n))\|_{\ell^1_\tau([0,T],L^\frac65)} \\
		&\lesssim K^{-1}\|g(\pi_Nu(t_n))\|_{\ell^1_\tau([0,T],W^{1,\frac65})} \\
		&\lesssim K^{-1} \|g'(\pi_Nu(t_n))\|_{\ell^1_\tau([0,T],L^3)}\||\pi_N\nabla u(t_n)|\|_{\ell^\infty_\tau([0,T],L^2)} \\
		&\lesssim K^{-1}\|\pi_Nu(t_n)\|^3_{\ell^3_\tau([0,T],L^{9})} \lesssim_{M,T} K^{-1}.
	\end{align*}
	Here it is important to use the interpolation error estimate from Lemma \ref{LemTrigIntStab} with $q = 6/5$. If we stuck to $L^2$-based estimates, we could for the $L^2 \times H^{-1}$ norm only reach a sub-optimal estimate of order $K^{-1/2}$ (the same as above for the energy norm), since optimal error bounds for trigonometric interpolation in negative Sobolev spaces are not available. Now that the estimates for all term appearing in \eqref{eq:FullyDiscErrRek} are given, we can finish the proof as in the semi-discrete case above.
\end{proof}

\subsection{Numerical experiment}\label{SecNum}

For the numerical tests we need initial data $(u^0,v^0)$ which lie in $H^1 \times L^2$ but do not have higher regularity. The standard approach to obtain initial in a Sobolev space $H^s(\T^3)$ is to take Fourier coefficients of the form 
\begin{equation}\label{eq:randomcoeff}
(1+|k|^2)^{-\frac12(\frac32+s+\eps)}r_k, \qquad k \in \Z^3,
\end{equation}
for some numbers $r \in \ell^\infty(\Z^3)$ and small $\eps>0$. Most commonly, one uses $r_k$ uniformly distributed in $[-1,1]+\iu[-1,1]$. This approach is well suited to precisely obtain the desired differentiability of order $s$. However, it is known that such random initial data does not only belong to $H^s$, but also to all $L^q$-based Sobolev spaces $H^{s,q}$ for $1 \le q<\infty$ with probability one. This can be exploited to obtain an improved local wellposedness theory for the nonlinear wave equation \eqref{NLW2} with random initial data compared to the deterministic setting, cf.\ \cite{BurqTzvetkov}. Since our error bounds are purely deterministic and heavily use $L^q$-based inequalities such as Sobolev and Strichartz estimates, it is crucial to numerically work with initial data which do not have higher integrability than predicted by Sobolev embedding. The following lemma shows that this can be achieved by simply taking $r = \mathbbm1$ in \eqref{eq:randomcoeff}.

\begin{Lemma}
	Let $s \in \R$. We define a distribution $f \in \mathcal D'(\T^d)$ by its Fourier coefficients
	\begin{equation}\label{eq:detcoeff}
	\hat f_k \coloneqq (1+|k|^2)^{-\frac12(\frac d2+s)}, \qquad k \in \Z^d.
	\end{equation}
	Then, for all $\eps >0$, the following assertions hold.
	\begin{enumerate}
		\item $f \in H^{s-\eps}(\T^d)$, but $f \notin H^s(\T^d)$.
		\item If $-d/2 \le s < d/2$, then $f \notin L^{\frac{2d}{d-2s}+\eps}(\T^d)$.
	\end{enumerate}	
\end{Lemma}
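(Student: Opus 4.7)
The plan is to handle (1) by direct summation on the Fourier side, and (2) by identifying $f$ with the periodization of the Bessel-potential kernel and extracting its pointwise blow-up at the origin.

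For (1), the Sobolev-norm definition \eqref{DefSobNormTorus} (extended to $\T^d$) gives directly
\[\|f\|_{H^r(\T^d)}^2 = \sum_{k \in \Z^d} (1+|k|^2)^{r - d/2 - s},\]
and comparison with $\int_{\R^d}(1+|\xi|^2)^{r-d/2-s}\, d\xi$ shows convergence iff $r - d/2 - s < -d/2$, i.e.\ iff $r < s$. This yields $f \in H^{s-\eps}(\T^d)$ for every $\eps > 0$ while $f \notin H^s(\T^d)$.

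For (2) in the generic range $-d/2 < s < d/2$, I would set $\alpha \coloneqq d/2 + s \in (0,d)$ and introduce the Bessel-potential kernel $G_\alpha$ on $\R^d$, i.e.\ the function with $\mathcal F G_\alpha(\xi) = (1+|\xi|^2)^{-\alpha/2}$. Classical theory (see, e.g., Chapter V of Stein's \emph{Singular Integrals and Differentiability Properties of Functions}) gives that $G_\alpha$ is positive, smooth on $\R^d\setminus\{0\}$, decays exponentially at infinity, and satisfies the sharp asymptotic
\[G_\alpha(x) = c_\alpha |x|^{\alpha - d}(1 + o(1)) \quad \text{as } x \to 0, \qquad c_\alpha > 0.\]
By Poisson summation, the periodization $\sum_{n \in \Z^d} G_\alpha(\cdot + 2\pi n)$ has $k$-th Fourier coefficient $\mathcal F G_\alpha(k) = (1+|k|^2)^{-\alpha/2} = \hat f_k$, so it agrees with $f$ on $\T^d$. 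The exponential decay of $G_\alpha$ ensures that, for $|x| \le \pi$, the $n \neq 0$ terms contribute a bounded (in fact smooth) function, while the singular $n = 0$ term dominates. Thus there exist $r, c > 0$ with $|f(x)| \ge c\,|x|^{s-d/2}$ for $0 < |x| \le r$, and polar integration gives
\[\int_{\T^d} |f(x)|^p\, dx \gtrsim \int_0^r t^{p(s-d/2) + d - 1}\, dt,\]
which diverges precisely when $p(d/2 - s) \ge d$, i.e.\ $p \ge 2d/(d-2s)$. Hence $f \notin L^{2d/(d-2s)+\eps}(\T^d)$ for every $\eps > 0$.

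The endpoint $s = -d/2$ is immediate: then $\hat f_k \equiv 1$, so $f$ is a constant multiple of the periodic Dirac delta at $0$, which is not representable by any $L^p$ function with $p \ge 1$, and the critical exponent $2d/(d-2s) = 1$ makes the non-membership vacuous. The main obstacle is the sharp near-origin asymptotic of $G_\alpha$, which I do not intend to re-derive; bookkeeping the Fourier-transform normalization constants is a minor subtlety that gets absorbed in the identification $f \equiv \sum_n G_\alpha(\cdot + 2\pi n)$. Note also that pure Sobolev embedding $H^{s-\eps} \hookrightarrow L^{2d/(d-2(s-\eps))}$ would only give $f \in L^{q}$ for $q < 2d/(d-2s)$, but not the sharp failure above the critical exponent — which is exactly why the Bessel-kernel lower bound is needed.
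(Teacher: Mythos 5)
Your part (a) is essentially identical to the paper's. Your part (b), however, takes a genuinely different route, and it is correct. The paper argues on the Fourier side: for $|x|_1 \le a/N$ with $a$ small, all the cosines $\cos(k\cdot x)$ with $|k|_\infty \le N$ are $\ge 1/2$, so $\pi_N f(x) \gtrsim \sum_{1\le |k|_\infty\le N}|k|^{-d/2-s} \eqsim N^{d/2-s}$; integrating this over the small cube gives $\|\pi_N f\|_{L^q} \gtrsim N^{-d/q+d/2-s}$, which is unbounded for $q > 2d/(d-2s)$, and one concludes via the Riesz theorem that square partial Fourier sums converge in $L^q(\T^d)$ for $1<q<\infty$ (Grafakos, Theorem~4.1.8). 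You instead work on the physical side: identify $f$ via Poisson summation with the periodization of the Bessel-potential kernel $G_{d/2+s}$, invoke the classical near-origin asymptotic $G_\alpha(x)\sim c_\alpha|x|^{\alpha-d}$, and read off the $L^p$ non-membership from the explicit power singularity.

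Each approach buys something. The paper's argument is essentially self-contained, using only elementary estimates on trigonometric sums plus the (standard but not entirely trivial) $L^q$-boundedness of the square Dirichlet kernel. Your argument imports the Bessel-kernel asymptotic as a black box, but in return gives a cleaner conceptual picture — $f$ is a locally integrable function with an explicit $|x|^{s-d/2}$ singularity at the origin — and in fact yields the slightly stronger conclusion that $f\notin L^{2d/(d-2s)}$ at the endpoint, which the paper's lower bound just misses (there the exponent $-d/q+d/2-s$ vanishes). You also correctly note that the positivity of $G_\alpha$ lets you discard the $n\neq 0$ terms of the periodization for the lower bound, and your separate treatment of $s=-d/2$ (where $\alpha=0$ and $f$ is a Dirac mass, while the claimed exponent is $1$) closes that boundary case cleanly. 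Both proofs are valid; yours trades a self-contained computation for a citation to a sharper classical fact.
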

\begin{proof}
	a) We have
	\begin{align*}
		\|f\|^2_{H^{s-\eps}} &= \sum_{k\in\Z^d}(1+|k|^2)^{s-\eps}|\hat f_k|^2 = \sum_{k\in\Z^d}(1+|k|^2)^{-\frac d2 - \eps}\\
		&\lesssim  \Big(1+\int_{\R^d\setminus B(0,1)}|x|^{-d-2\eps} \dd x\Big) < \infty,
	\end{align*}
	but $\|f\|^2_{H^s} \eqsim 1+\int_{\R^d\setminus B(0,1)}|x|^{-d} \dd x = \infty$. \smallskip
	
	b) For $N \in \N$, we consider the truncated Fourier series
	\[\pi_N f(x) = (2\pi)^{-\frac d2} \sum_{|k|_{\infty}\le N} \hat f_k e^{\iu k \cdot x}, \qquad x \in \T^d. \]
	Let $a>0$ such that $\cos(z) \ge 1/2$ for all $|z|\le a$. For all $|x|_1\le a/N$, we can thus infer that
	\begin{align*}
		\pi_N f(x) &\eqsim \sum_{|k|_{\infty}\le N} (1+|k|^2)^{-\frac12(\frac d2+s)} e^{\iu k \cdot x} = \sum_{|k|_{\infty}\le N} (1+|k|^2)^{-\frac12(\frac d2+s)} \cos( k \cdot x) \\
		&\gtrsim \sum_{1\le|k|_{\infty}\le N} |k|^{-\frac d2-s}. 
	\end{align*}
	Hence, for $q \in (1,\infty)$,
	\begin{align*}
		\|\pi_N f\|_{L^q} &\gtrsim N^{-\frac d q}  \sum_{1\le|k|_{\infty}\le N} |k|^{-\frac d2-s} \eqsim N^{-\frac d q} \int_{1\le|x|\le N} |x|^{-\frac d2-s} \dd x \\
		&\eqsim N^{-\frac d q} \int_1^N \rho^{d-1-\frac d2-s} \dd \rho = \frac{2}{d-2s}N^{-\frac d q + \frac d 2 -s},
	\end{align*}
	which is unbounded as $N \to \infty$ if $d/q < d/2 -s$. By Theorem 4.1.8 in \cite{GrafakosClassical}, $f \in L^q(\T^d)$ would imply that $\pi_N f \to f$ in $L^q$. Thus, if $d/q < d/2 -s$, $f$ cannot belong to $L^q(\T^d)$.  
\end{proof} 

We illustrate our error bounds by a numerical experiment for the nonlinear wave equation \eqref{NLW2} with $\mu = 1$ and powers $\alpha \in \{3,4,5\}$. We focus on the error of the time integration, {since we have proven optimal spatial error bounds, as noted before Theorem \ref{ThmStrangFully}}. The initial data $(u^0,v^0)$ are defined using \eqref{eq:detcoeff} with $d=3$ and $s = 1+\eps$ or $s=\eps$, for a very small $\eps>0$. We use a scaling such that $\|u^0\|_{H^1} = \|v^0\|_{L^2} \approx 3$. We apply the scheme \eqref{StrangFully} with spatial discretization parameters $K \in {\{2^5,2^6,2^7\}}$. For the implementation we identify $\T^3 = [0,1]^3$ such that the spatial resolution (distance of the collocation points) is $h = (2K+1)^{-1}$. We compare the errors in the $\ell^\infty_{\tau}([0,1/4],L^2 \times H^{-1})$ and $\ell^\infty_{\tau}([0,1/4],H^1 \times L^2)$ norms for various step sizes $\tau$, where the reference solution is computed using \eqref{StrangFully} with the same $K$ and $\tau_{\mathrm{ref}} = 2^{-12}$. In the plots only the temporal error is visible since the reference solution has the same spatial accuracy.
Our Python code to reproduce the results is available at \url{https://doi.org/10.35097/6c5w0sn5g44m69rx}.

For the cubic equation with $\alpha=3$, in Figure \ref{fig:a3} we numerically observe temporal convergence rates of order $2$ in the $L^2 \times H^{-1}$ norm and order $1$ in the $H^1 \times L^2$ norm, uniformly in the spatial discretization parameter $K$. These observations are in accordance with Theorem \ref{ThmStrang}. If $\tau$ is small compared to the spatial resolution, the error is of second order even in $H^1 \times L^2$, however with deteriorating error constant as $K \to \infty$. This behavior was already observed in the one-dimensional case in \cite{Gauckler}.

\begin{figure}[h]
	\centering
	\begin{subfigure}{0.5\textwidth}
		\begin{tikzpicture}[scale=0.78]
			
			\definecolor{darkgray176}{RGB}{176,176,176}
			\definecolor{darkgreen}{RGB}{0,100,0}
			\definecolor{lightgray204}{RGB}{204,204,204}
			\definecolor{limegreen}{RGB}{50,205,50}
			
			\begin{axis}[
				legend cell align={left},
				legend style={
					fill opacity=0.8,
					draw opacity=1,
					text opacity=1,
					at={(0.03,0.97)},
					anchor=north west,
					draw=lightgray204
				},
				log basis x={10},
				log basis y={10},
				tick align=outside,
				tick pos=left,
				title={$L^2 \times H^{-1}$ error},
				x grid style={darkgray176},
				xlabel={$\tau$},
				xmin=0.00158643046163327, xmax=0.153893051668115,
				xmode=log,
				xtick style={color=black},
				y grid style={darkgray176},
				ymin=1.51657353041009e-06, ymax=0.0236207370497629,
				ymode=log,
				ytick style={color=black}
				]
				\addplot [semithick, darkgreen, mark=square, mark size=3, mark options={solid}]
				table {%
					0.125	0.0100354
					0.100342	0.00704424
					0.0805664	0.00452271
					0.0649414	0.00316879
					0.052002	0.00209714
					0.041748	0.00142125
					0.0336914	0.000951007
					0.0270996	0.000629245
					0.0217285	0.000414261
					0.017334	0.00027559
					0.013916	0.000182219
					0.0112305	0.000120525
					0.0090332	7.90742e-05
					0.00732422	5.22915e-05
					0.00585938	3.32295e-05
					0.00463867	2.00814e-05
					0.00366211	1.17819e-05
					0.00292969	7.44283e-06
					0.00244141	4.90953e-06
					0.00195312	2.90349e-06
				};
				\addlegendentry{$K=2^7$}
				\addplot [semithick, red, mark=o, mark size=3, mark options={solid}]
				table {%
					0.125	0.0100296
					0.100342	0.0070385
					0.0805664	0.00451594
					0.0649414	0.00316217
					0.052002	0.00209077
					0.041748	0.00141433
					0.0336914	0.000943814
					0.0270996	0.0006223
					0.0217285	0.000407138
					0.017334	0.000267998
					0.013916	0.000174401
					0.0112305	0.000112385
					0.0090332	7.03443e-05
					0.00732422	4.43858e-05
					0.00585938	2.81467e-05
					0.00463867	1.63619e-05
					0.00366211	9.46296e-06
					0.00292969	5.93638e-06
					0.00244141	4.08656e-06
					0.00195312	2.58999e-06
				};
				\addlegendentry{$K=2^6$}
				\addplot [semithick, blue, mark=triangle, mark size=3, mark options={solid}]
				table {%
					0.125	0.0100084
					0.100342	0.00701777
					0.0805664	0.00449114
					0.0649414	0.00313814
					0.052002	0.00206792
					0.041748	0.00138932
					0.0336914	0.000917698
					0.0270996	0.000597009
					0.0217285	0.000380738
					0.017334	0.000239556
					0.013916	0.000150611
					0.0112305	9.59207e-05
					0.0090332	5.71307e-05
					0.00732422	3.488e-05
					0.00585938	2.18362e-05
					0.00463867	1.3562e-05
					0.00366211	8.40233e-06
					0.00292969	5.35056e-06
					0.00244141	3.69975e-06
					0.00195312	2.35195e-06
				};
				\addlegendentry{$K=2^5$}
				\addplot [semithick, black, dashed]
				table {%
					0.125 0.0152310009851225
					0.100341796875 0.00981459013903765
					0.08056640625 0.00632727053558291
					0.06494140625 0.00411104090005238
					0.052001953125 0.00263601411321268
					0.041748046875 0.00169895057604205
					0.03369140625 0.00110648797134656
					0.027099609375 0.000715870525885372
					0.021728515625 0.000460223231518386
					0.017333984375 0.000292890457023631
					0.013916015625 0.000188772286226895
					0.01123046875 0.000122943107927396
					0.009033203125 7.95411695428191e-05
					0.00732421875 5.22914920296108e-05
					0.005859375 3.34665548989509e-05
					0.004638671875 2.0974698469655e-05
					0.003662109375 1.30728730074027e-05
					0.0029296875 8.36663872473773e-06
					0.00244140625 5.81016578106787e-06
					0.001953125 3.71850609988343e-06
				};
				\addlegendentry{$\mathcal O(\tau^2)$}
			\end{axis}
		\end{tikzpicture}
		
	\end{subfigure}%
	\begin{subfigure}{0.5\textwidth}

		\begin{tikzpicture}[scale=0.78]
			
			\definecolor{darkgray176}{RGB}{176,176,176}
			\definecolor{darkgreen}{RGB}{0,100,0}
			\definecolor{lightgray204}{RGB}{204,204,204}
			\definecolor{limegreen}{RGB}{50,205,50}
			
			\begin{axis}[
				legend cell align={left},
				legend style={
					fill opacity=0.8,
					draw opacity=1,
					text opacity=1,
					at={(0.03,0.97)},
					anchor=north west,
					draw=lightgray204
				},
				log basis x={10},
				log basis y={10},
				tick align=outside,
				tick pos=left,
				title={$H^1 \times L^2$ error},
				x grid style={darkgray176},
				xlabel={$\tau$},
				xmin=0.00158643046163327, xmax=0.153893051668115,
				xmode=log,
				xtick style={color=black},
				y grid style={darkgray176},
				ymin=0.000150460376849438, ymax=9.43078406102876,
				ymode=log,
				ytick style={color=black}
				]
				\addplot [semithick, darkgreen, mark=x, mark size=3, mark options={solid}]
				table {%
					0.125	0.160354
					0.100342	0.129799
					0.0805664	0.104786
					0.0649414	0.0845373
					0.052002	0.0680435
					0.041748	0.0547015
					0.0336914	0.0442067
					0.0270996	0.0355579
					0.0217285	0.0284727
					0.017334	0.0226292
					0.013916	0.0180787
					0.0112305	0.0145029
					0.0090332	0.0114726
					0.00732422	0.00920954
					0.00585938	0.00710204
					0.00463867	0.0054408
					0.00366211	0.00426997
					0.00292969	0.00301607
					0.00244141	0.00217749
					0.00195312	0.00130712
				};
				\addlegendentry{$K=2^7$, no filter}
				\addplot [semithick, darkgreen, mark=square, mark size=3, mark options={solid}]
				table {%
					0.125	0.156274
					0.100342	0.14983
					0.0805664	0.122049
					0.0649414	0.103713
					0.052002	0.0898679
					0.041748	0.0761462
					0.0336914	0.0641657
					0.0270996	0.0529447
					0.0217285	0.0436979
					0.017334	0.0368049
					0.013916	0.0303049
					0.0112305	0.0244323
					0.0090332	0.0194527
					0.00732422	0.0152437
					0.00585938	0.0111947
					0.00463867	0.0071816
					0.00366211	0.00426997
					0.00292969	0.00301607
					0.00244141	0.00217749
					0.00195312	0.00130712
				};
				\addlegendentry{$K=2^7$}
				\addplot [semithick, red, mark=o, mark size=3, mark options={solid}]
				table {%
					0.125	0.155222
					0.100342	0.148664
					0.0805664	0.120635
					0.0649414	0.102177
					0.052002	0.0880383
					0.041748	0.0740752
					0.0336914	0.0617481
					0.0270996	0.050117
					0.0217285	0.0403479
					0.017334	0.0328722
					0.013916	0.0255685
					0.0112305	0.0187358
					0.0090332	0.0123496
					0.00732422	0.00834351
					0.00585938	0.0059738
					0.00463867	0.00389861
					0.00366211	0.00222511
					0.00292969	0.00121307
					0.00244141	0.00075427
					0.00195312	0.000521195
				};
				\addlegendentry{$K=2^6$}
				\addplot [semithick, blue, mark=triangle, mark size=3, mark options={solid}]
				table {%
					0.125	0.152445
					0.100342	0.145547
					0.0805664	0.116861
					0.0649414	0.0981261
					0.052002	0.0831551
					0.041748	0.0685419
					0.0336914	0.0552011
					0.0270996	0.0423084
					0.0217285	0.0309349
					0.017334	0.0213543
					0.013916	0.0150951
					0.0112305	0.0110023
					0.0090332	0.00729578
					0.00732422	0.00443409
					0.00585938	0.00242806
					0.00463867	0.00138837
					0.00366211	0.000910648
					0.00292969	0.000553013
					0.00244141	0.000393267
					0.00195312	0.000248584
				};
				\addlegendentry{$K=2^5$}
				\addplot [semithick, black, densely dotted]
				table {%
					0.125 0.272212471326603
					0.100341796875 0.218514308037566
					0.08056640625 0.175449444409725
					0.06494140625 0.141422885493899
					0.052001953125 0.113244641391731
					0.041748046875 0.0909147121032209
					0.03369140625 0.0733697676622485
					0.027099609375 0.0590148131196346
					0.021728515625 0.0473181834923197
					0.017333984375 0.0377482137972438
					0.013916015625 0.030304904034407
					0.01123046875 0.0244565892207495
					0.009033203125 0.0196716043732115
					0.00732421875 0.0159499494917931
					0.005859375 0.0127599595934345
					0.004638671875 0.0101016346781357
					0.003662109375 0.00797497474589657
					0.0029296875 0.00637997979671726
					0.00244140625 0.00531664983059772
					0.001953125 0.00425331986447817
				};
				\addplot [semithick, black, dashed]
				table {%
					0.125 5.70816652903605
					0.100341796875 3.67824248600501
					0.08056640625 2.37128957753001
					0.06494140625 1.54070675250425
					0.052001953125 0.987906674407335
					0.041748046875 0.636720647718594
					0.03369140625 0.414681714549875
					0.027099609375 0.268288878647816
					0.021728515625 0.172479198747614
					0.017333984375 0.109767408267482
					0.013916015625 0.0707467386353993
					0.01123046875 0.0460757460610973
					0.009033203125 0.0298098754053129
					0.00732421875 0.0195974345250414
					0.005859375 0.0125423580960265
					0.004638671875 0.00786074873726659
					0.003662109375 0.00489935863126034
					0.0029296875 0.00313558952400662
					0.00244140625 0.0021774927250046
					0.001953125 0.00139359534400294
				};
			\end{axis}
			
			\begin{axis}[
				hide axis,
				legend style={
					fill opacity=0.8,
					draw opacity=1,
					text opacity=1,
					at={(0.97,0.03)},
					anchor=south east,
					draw=lightgray204
				},    
				]
				\addplot[forget plot] coordinates {(0,0)};
				\addlegendimage{semithick, black, densely dotted, line legend}
				\addlegendentry{$\mathcal O(\tau)$}
				\addlegendimage{semithick, black, dashed}
				\addlegendentry{$\mathcal O(\tau^2)$}
			\end{axis}
		\end{tikzpicture}
	\end{subfigure}%
	\caption{\small Errors for $\alpha=3$}
	\label{fig:a3}
\end{figure}
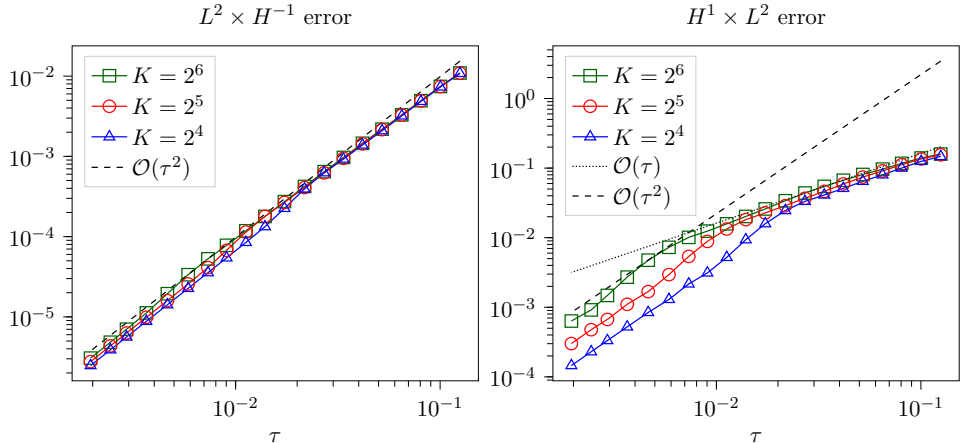

In the case $\alpha=4$, we observe in Figure \ref{fig:a4} that the convergence rates which are uniform in $K$ have reduced to $3/2$ for the $L^2 \times H^{-1}$ norm and order $1/2$ for the energy norm, again in accordance with Theorem \ref{ThmStrang}.

\begin{figure}[h]
	\centering
	\begin{subfigure}{0.5\textwidth}
		\begin{tikzpicture}[scale=0.78]
			
			\definecolor{darkgray176}{RGB}{176,176,176}
			\definecolor{darkgreen}{RGB}{0,100,0}
			\definecolor{lightgray204}{RGB}{204,204,204}
			\definecolor{limegreen}{RGB}{50,205,50}
			
			\begin{axis}[
				legend cell align={left},
				legend style={
					fill opacity=0.8,
					draw opacity=1,
					text opacity=1,
					at={(0.03,0.97)},
					anchor=north west,
					draw=lightgray204
				},
				log basis x={10},
				log basis y={10},
				tick align=outside,
				tick pos=left,
				title={$L^2 \times H^{-1}$ error},
				x grid style={darkgray176},
				xlabel={$\tau$},
				xmin=0.00158643046163327, xmax=0.153893051668115,
				xmode=log,
				xtick style={color=black},
				y grid style={darkgray176},
				ymin=1.15248074259808e-05, ymax=0.378190904578514,
				ymode=log,
				ytick style={color=black}
				]
				\addplot [semithick, darkgreen, mark=square, mark size=3, mark options={solid}]
				table {%
					0.125	0.032496
					0.100342	0.0258705
					0.0805664	0.0183385
					0.0649414	0.0138416
					0.052002	0.0100674
					0.041748	0.00741645
					0.0336914	0.00542878
					0.0270996	0.00394043
					0.0217285	0.00284764
					0.017334	0.00206459
					0.013916	0.00149388
					0.0112305	0.00108242
					0.0090332	0.000778763
					0.00732422	0.000564372
					0.00585938	0.000398326
					0.00463867	0.000272025
					0.00366211	0.00018182
					0.00292969	0.00012877
					0.00244141	8.9928e-05
					0.00195312	5.18176e-05
				};
				\addlegendentry{$K=2^7$}
				\addplot [semithick, red, mark=o, mark size=3, mark options={solid}]
				table {%
					0.125	0.0324456
					0.100342	0.0258148
					0.0805664	0.0182799
					0.0649414	0.0137789
					0.052002	0.0100016
					0.041748	0.00734789
					0.0336914	0.00535848
					0.0270996	0.00386923
					0.0217285	0.00277639
					0.017334	0.00199429
					0.013916	0.00142545
					0.0112305	0.00101707
					0.0090332	0.000710171
					0.00732422	0.000498081
					0.00585938	0.000354776
					0.00463867	0.00022256
					0.00366211	0.00012171
					0.00292969	6.3272e-05
					0.00244141	4.1796e-05
					0.00195312	2.81395e-05
				};
				\addlegendentry{$K=2^6$}
				\addplot [semithick, blue, mark=triangle, mark size=3, mark options={solid}]
				table {%
					0.125	0.0322787
					0.100342	0.0256358
					0.0805664	0.0180966
					0.0649414	0.0135892
					0.052002	0.00980902
					0.041748	0.00715406
					0.0336914	0.0051677
					0.0270996	0.00368349
					0.0217285	0.00259797
					0.017334	0.00180412
					0.013916	0.00125608
					0.0112305	0.000899166
					0.0090332	0.000576812
					0.00732422	0.000338247
					0.00585938	0.000176891
					0.00463867	0.000103807
					0.00366211	6.72405e-05
					0.00292969	4.15072e-05
					0.00244141	2.90547e-05
					0.00195312	1.84889e-05
				};
				\addlegendentry{$K=2^5$}
				\addplot [semithick, black, densely dotted]
				table {%
					0.125 0.040216753027528
					0.100341796875 0.0289244290809709
					0.08056640625 0.0208100427545312
					0.06494140625 0.0150599842477302
					0.052001953125 0.0107912428910262
					0.041748046875 0.00776240375122915
					0.03369140625 0.005627567233091
					0.027099609375 0.00405963072563398
					0.021728515625 0.00291465444148231
					0.017333984375 0.00207677566267488
					0.013916015625 0.00149387529844357
					0.01123046875 0.00108302581896927
					0.009033203125 0.000781276297418417
					0.00732421875 0.000570405815603587
					0.005859375 0.000408149177104266
					0.004638671875 0.000287496435230709
					0.003662109375 0.00020166891012077
					0.0029296875 0.000144302525433068
					0.00244140625 0.000109774650395353
					0.001953125 7.85483457568907e-05
				};
				\addlegendentry{$\mathcal O(\tau^{1.5})$}
				\addplot [semithick, black, dashed]
				table {%
					0.125 0.235740844160861
					0.100341796875 0.151907269044864
					0.08056640625 0.0979315869488441
					0.06494140625 0.0636294523980938
					0.052001953125 0.04079943221563
					0.041748046875 0.026295845123702
					0.03369140625 0.0171258874366739
					0.027099609375 0.0110800283085097
					0.021728515625 0.00712319651259683
					0.017333984375 0.00453327024618111
					0.013916015625 0.00292176056930023
					0.01123046875 0.00190287638185266
					0.009033203125 0.00123111425650108
					0.00732421875 0.00080935195825491
					0.005859375 0.000517985253283143
					0.004638671875 0.000324640063255581
					0.003662109375 0.000202337989563728
					0.0029296875 0.000129496313320786
					0.00244140625 8.99279953616567e-05
					0.001953125 5.75539170314603e-05
				};
				\addlegendentry{$\mathcal O(\tau^2)$}
			\end{axis}
			
		\end{tikzpicture}
		
	\end{subfigure}%
	\begin{subfigure}{0.5\textwidth}
		\begin{tikzpicture}[scale=0.78]
			
			\definecolor{darkgray176}{RGB}{176,176,176}
			\definecolor{darkgreen}{RGB}{0,100,0}
			\definecolor{lightgray204}{RGB}{204,204,204}
			\definecolor{limegreen}{RGB}{50,205,50}
			
			\begin{axis}[
				legend cell align={left},
				legend style={
					fill opacity=0.8,
					draw opacity=1,
					text opacity=1,
					at={(0.03,0.97)},
					anchor=north west,
					draw=lightgray204
				},
				log basis x={10},
				log basis y={10},
				tick align=outside,
				tick pos=left,
				title={$H^1 \times L^2$ error},
				x grid style={darkgray176},
				xlabel={$\tau$},
				xmin=0.00158643046163327, xmax=0.153893051668115,
				xmode=log,
				xtick style={color=black},
				y grid style={darkgray176},
				ymin=0.00157073136040144, ymax=238.503070039141,
				ymode=log,
				ytick style={color=black}
				]
				\addplot [semithick, darkgreen, mark=x, mark size=3, mark options={solid}]
				table {%
					0.125	1.70198
					0.100342	1.34544
					0.0805664	1.06478
					0.0649414	0.848002
					0.052002	0.673352
					0.041748	0.539405
					0.0336914	0.437903
					0.0270996	0.358002
					0.0217285	0.295377
					0.017334	0.245638
					0.013916	0.207928
					0.0112305	0.178314
					0.0090332	0.152935
					0.00732422	0.133436
					0.00585938	0.114389
					0.00463867	0.0970462
					0.00366211	0.084291
					0.00292969	0.0697113
					0.00244141	0.0528976
					0.00195312	0.0326967	
				};
				\addlegendentry{$K=2^7$, no filter}
				\addplot [semithick, darkgreen, mark=square, mark size=3, mark options={solid}]
				table {%
					0.125	0.677935
					0.100342	0.676246
					0.0805664	0.604792
					0.0649414	0.565968
					0.052002	0.522246
					0.041748	0.481323
					0.0336914	0.437385
					0.0270996	0.392347
					0.0217285	0.350116
					0.017334	0.315673
					0.013916	0.279079
					0.0112305	0.242929
					0.0090332	0.208546
					0.00732422	0.17631
					0.00585938	0.143116
					0.00463867	0.107926
					0.00366211	0.084291
					0.00292969	0.0697113
					0.00244141	0.0528976
					0.00195312	0.0326967
				};
				\addlegendentry{$K=2^7$}
				\addplot [semithick, red, mark=o, mark size=3, mark options={solid}]
				table {%
					0.125	0.653578
					0.100342	0.651496
					0.0805664	0.577275
					0.0649414	0.536983
					0.052002	0.490778
					0.041748	0.447545
					0.0336914	0.400596
					0.0270996	0.352007
					0.0217285	0.30562
					0.017334	0.26684
					0.013916	0.224435
					0.0112305	0.181775
					0.0090332	0.138765
					0.00732422	0.11468
					0.00585938	0.0956192
					0.00463867	0.0665353
					0.00366211	0.0390277
					0.00292969	0.0213553
					0.00244141	0.0125306
					0.00195312	0.00822187
				};
				\addlegendentry{$K=2^6$}
				\addplot [semithick, blue, mark=triangle, mark size=3, mark options={solid}]
				table {%
					0.125	0.612777
					0.100342	0.609726
					0.0805664	0.530664
					0.0649414	0.487947
					0.052002	0.437042
					0.041748	0.389519
					0.0336914	0.336533
					0.0270996	0.280268
					0.0217285	0.225248
					0.017334	0.175932
					0.013916	0.149105
					0.0112305	0.123214
					0.0090332	0.0867859
					0.00732422	0.0542258
					0.00585938	0.0299195
					0.00463867	0.0151557
					0.00366211	0.0100798
					0.00292969	0.00627087
					0.00244141	0.00433455
					0.00195312	0.00270159
				};
				\addlegendentry{$K=2^5$}
				\addplot [semithick, black, densely dotted]
				table {%
					0.125 0.847701131770267
					0.100341796875 0.75950160058192
					0.08056640625 0.680557387174171
					0.06494140625 0.611010397732117
					0.052001953125 0.546761069338424
					0.041748046875 0.48989819315787
					0.03369140625 0.440095903111597
					0.027099609375 0.394701949897081
					0.021728515625 0.353429491464331
					0.017333984375 0.315672650564947
					0.013916015625 0.282842853695208
					0.01123046875 0.254089488130732
					0.009033203125 0.22788127702275
					0.00732421875 0.205195772869214
					0.005859375 0.183532678732468
					0.004638671875 0.16329939771929
					0.003662109375 0.145095322466636
					0.0029296875 0.12977720170106
					0.00244140625 0.118469834702614
					0.001953125 0.105962641471283
				};
				\addplot [semithick, black, dashed]
				table {%
					0.125 138.667968403465
					0.100341796875 89.3552089335696
					0.08056640625 57.6055212369435
					0.06494140625 37.4282484907362
					0.052001953125 23.9991266574737
					0.041748046875 15.4677965701512
					0.03369140625 10.0738250361465
					0.027099609375 6.51751723746906
					0.021728515625 4.19002142991579
					0.017333984375 2.66656962860819
					0.013916015625 1.71864406335013
					0.01123046875 1.11931389290516
					0.009033203125 0.724168581941007
					0.00732421875 0.476078687908624
					0.005859375 0.304690360261519
					0.004638671875 0.190960451483348
					0.003662109375 0.119019671977156
					0.0029296875 0.0761725900653798
					0.00244140625 0.0528976319898471
					0.001953125 0.0338544844735021
				};
			\end{axis}
			\begin{axis}[
				hide axis,
				legend style={
					fill opacity=0.8,
					draw opacity=1,
					text opacity=1,
					at={(0.97,0.03)},
					anchor=south east,
					draw=lightgray204
				},    
				]
				\addplot[forget plot] coordinates {(0,0)};
				\addlegendimage{semithick, black, densely dotted, line legend}
				\addlegendentry{$\mathcal O(\tau^{0.5})$}
				\addlegendimage{semithick, black, dashed}
				\addlegendentry{$\mathcal O(\tau^2)$}
			\end{axis}
		\end{tikzpicture}
		
	\end{subfigure}%
	\caption{\small Errors for $\alpha=4$}
	\label{fig:a4}
\end{figure}
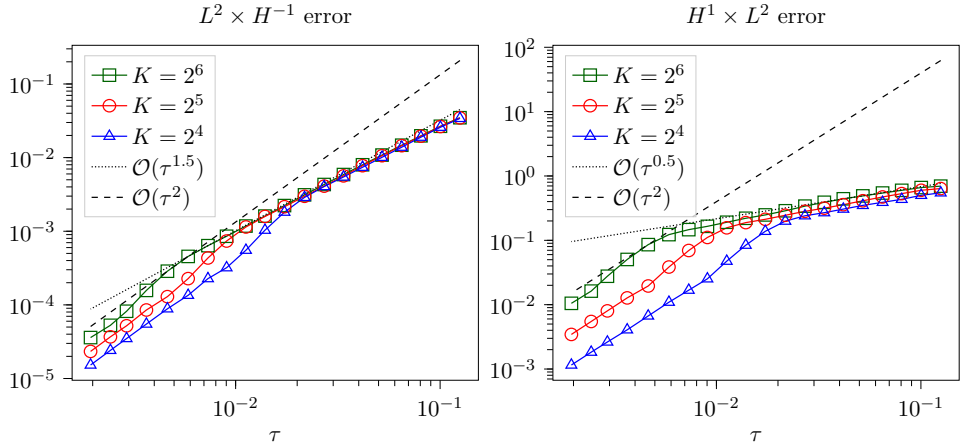

We also did the experiment for the critical power $\alpha=5$, see Figure \ref{fig:a5}. Here it turned out that we get temporal convergence of order $1$ in the $L^2 \times H^{-1}$ norm, uniformly in $K$. Moreover, we cannot observe a clear convergence order for the error in the $H^1 \times L^2$ norm if $\tau$ is not small compared to the spatial resolution. 
This behavior fits to \eqref{eq:critbound}.

\begin{figure}[h]
	\centering
	\begin{subfigure}{0.5\textwidth}
		\begin{tikzpicture}[scale=0.78]
			
			\definecolor{darkgray176}{RGB}{176,176,176}
			\definecolor{darkgreen}{RGB}{0,100,0}
			\definecolor{lightgray204}{RGB}{204,204,204}
			\definecolor{limegreen}{RGB}{50,205,50}
			
			\begin{axis}[
				legend cell align={left},
				legend style={
					fill opacity=0.8,
					draw opacity=1,
					text opacity=1,
					at={(0.03,0.97)},
					anchor=north west,
					draw=lightgray204
				},
				log basis x={10},
				log basis y={10},
				tick align=outside,
				tick pos=left,
				title={$L^2 \times H^{-1}$ error},
				x grid style={darkgray176},
				xlabel={$\tau$},
				xmin=0.00158643046163327, xmax=0.153893051668115,
				xmode=log,
				xtick style={color=black},
				y grid style={darkgray176},
				ymin=6.56718098930426e-05, ymax=5.69295494728749,
				ymode=log,
				ytick style={color=black}
				]
				\addplot [semithick, darkgreen, mark=square, mark size=3, mark options={solid}]
				table {%
					0.125	0.0776124
					0.100342	0.0682285
					0.0805664	0.051953
					0.0649414	0.0429985
					0.052002	0.0341873
					0.041748	0.0276781
					0.0336914	0.0221431
					0.0270996	0.0175943
					0.0217285	0.013964
					0.017334	0.0112216
					0.013916	0.00894569
					0.0112305	0.00711641
					0.0090332	0.00563234
					0.00732422	0.00447123
					0.00585938	0.00346238
					0.00463867	0.00259432
					0.00366211	0.00189362
					0.00292969	0.00152193
					0.00244141	0.00121963
					0.00195312	0.000828944
				};
				\addlegendentry{$K=2^7$}
				\addplot [semithick, red, mark=o, mark size=3, mark options={solid}]
				table {%
					0.125	0.0774925
					0.100342	0.0680768
					0.0805664	0.0517696
					0.0649414	0.0427774
					0.052002	0.0339262
					0.041748	0.0273733
					0.0336914	0.0217957
					0.0270996	0.0172034
					0.0217285	0.0135292
					0.017334	0.0107384
					0.013916	0.00841114
					0.0112305	0.00652524
					0.0090332	0.00494012
					0.00732422	0.00373827
					0.00585938	0.00300667
					0.00463867	0.00223411
					0.00366211	0.00143335
					0.00292969	0.000835542
					0.00244141	0.00050615
					0.00195312	0.000262082
				};
				\addlegendentry{$K=2^6$}
				\addplot [semithick, blue, mark=triangle, mark size=3, mark options={solid}]
				table {%
					0.125	0.0770256
					0.100342	0.0675214
					0.0805664	0.0511392
					0.0649414	0.0420558
					0.052002	0.033115
					0.041748	0.0264717
					0.0336914	0.0208125
					0.0270996	0.0161196
					0.0217285	0.0123308
					0.017334	0.00932082
					0.013916	0.00701915
					0.0112305	0.00566911
					0.0090332	0.00423007
					0.00732422	0.00283269
					0.00585938	0.00165108
					0.00463867	0.000863188
					0.00366211	0.000427126
					0.00292969	0.000231005
					0.00244141	0.000184076
					0.00195312	0.000110111
				};
				\addlegendentry{$K=2^5$}
				\addplot [semithick, black, densely dotted]
				table {%
					0.125 0.084995123046895
					0.100341796875 0.0682285069770974
					0.08056640625 0.0547820129013191
					0.06494140625 0.0441576225204572
					0.052001953125 0.035359299236306
					0.041748046875 0.0283870430488653
					0.03369140625 0.0229088417587334
					0.027099609375 0.0184266770668073
					0.021728515625 0.0147745428733861
					0.017333984375 0.0117864330787687
					0.013916015625 0.00946234768295511
					0.01123046875 0.00763628058624448
					0.009033203125 0.00614222568893578
					0.00732421875 0.00498018299102901
					0.005859375 0.00398414639282321
					0.004638671875 0.00315411589431837
					0.003662109375 0.0024900914955145
					0.0029296875 0.0019920731964116
					0.00244140625 0.00166006099700967
					0.001953125 0.00132804879760774
				};
				\addlegendentry{$\mathcal O(\tau)$}
				\addplot [semithick, black, dashed]
				table {%
					0.125 3.39535412020337
					0.100341796875 2.18790669761228
					0.08056640625 1.41049981571253
					0.06494140625 0.91644926501888
					0.052001953125 0.58763054305842
					0.041748046875 0.378736686053721
					0.03369140625 0.246662612400638
					0.027099609375 0.159584648571112
					0.021728515625 0.102594757027172
					0.017333984375 0.0652922825620467
					0.013916015625 0.042081854005969
					0.01123046875 0.0274069569334042
					0.009033203125 0.0177316276190125
					0.00732421875 0.0116570232703515
					0.005859375 0.00746049489302497
					0.004638671875 0.00467576155621878
					0.003662109375 0.00291425581758788
					0.0029296875 0.00186512372325624
					0.00244140625 0.00129522480781684
					0.001953125 0.000828943877002775
				};
				\addlegendentry{$\mathcal O(\tau^2)$}
			\end{axis}
			
		\end{tikzpicture}
		
	\end{subfigure}%
	\begin{subfigure}{0.5\textwidth}
		\begin{tikzpicture}[scale=0.78]
			
			\definecolor{darkgray176}{RGB}{176,176,176}
			\definecolor{darkgreen}{RGB}{0,100,0}
			\definecolor{lightgray204}{RGB}{204,204,204}
			\definecolor{limegreen}{RGB}{50,205,50}
			
			\begin{axis}[
				legend cell align={left},
				legend style={
					fill opacity=0.8,
					draw opacity=1,
					text opacity=1,
					at={(0.03,0.97)},
					anchor=north west,
					draw=lightgray204
				},
				log basis x={10},
				log basis y={10},
				tick align=outside,
				tick pos=left,
				title={$H^1 \times L^2$ error},
				x grid style={darkgray176},
				xlabel={$\tau$},
				xmin=0.00158643046163327, xmax=0.153893051668115,
				xmode=log,
				xtick style={color=black},
				y grid style={darkgray176},
				ymin=0.0108469044809689, ymax=3792.88443435493,
				ymode=log,
				ytick style={color=black}
				]
				\addplot [semithick, darkgreen, mark=x, mark size=3, mark options={solid}]
				table {%
					0.125	28.9733
					0.100342	22.8869
					0.0805664	18.0183
					0.0649414	14.1858
					0.052002	11.031
					0.041748	8.55201
					0.0336914	6.62883
					0.0270996	5.08496
					0.0217285	3.86386
					0.017334	2.91298
					0.013916	2.21711
					0.0112305	1.71624
					0.0090332	1.36669
					0.00732422	1.14567
					0.00585938	1.00081
					0.00463867	0.902729
					0.00366211	0.838991
					0.00292969	0.792036
					0.00244141	0.701449
					0.00195312	0.518352
				};
				\addlegendentry{$K=2^7$, no filter}
				\addplot [semithick, darkgreen, mark=square, mark size=3, mark options={solid}]
				table {%
					0.125	2.91341
					0.100342	2.9271
					0.0805664	2.78847
					0.0649414	2.7313
					0.052002	2.63286
					0.041748	2.54619
					0.0336914	2.43167
					0.0270996	2.30061
					0.0217285	2.1624
					0.017334	2.04609
					0.013916	1.90119
					0.0112305	1.73817
					0.0090332	1.56375
					0.00732422	1.38069
					0.00585938	1.18959
					0.00463867	0.979484
					0.00366211	0.838991
					0.00292969	0.792036
					0.00244141	0.701449
					0.00195312	0.518352
				};
				\addlegendentry{$K=2^7$}
				\addplot [semithick, red, mark=o, mark size=3, mark options={solid}]
				table {%
					0.125	2.55316
					0.100342	2.56825
					0.0805664	2.41005
					0.0649414	2.34456
					0.052002	2.23026
					0.041748	2.12903
					0.0336914	1.99387
					0.0270996	1.83703
					0.0217285	1.66787
					0.017334	1.52069
					0.013916	1.33274
					0.0112305	1.14173
					0.0090332	0.942724
					0.00732422	0.828284
					0.00585938	0.782708
					0.00463867	0.657579
					0.00366211	0.457697
					0.00292969	0.282602
					0.00244141	0.177876
					0.00195312	0.095603
				};
				\addlegendentry{$K=2^6$}
				\addplot [semithick, blue, mark=triangle, mark size=3, mark options={solid}]
				table {%
					0.125	2.15617
					0.100342	2.1726
					0.0805664	1.98694
					0.0649414	1.90956
					0.052002	1.77134
					0.041748	1.64715
					0.0336914	1.47661
					0.0270996	1.27759
					0.0217285	1.08045
					0.017334	0.908521
					0.013916	0.806053
					0.0112305	0.757098
					0.0090332	0.629914
					0.00732422	0.453536
					0.00585938	0.280595
					0.00463867	0.154215
					0.00366211	0.0792887
					0.00292969	0.0410526
					0.00244141	0.03037
					0.00195312	0.0193772
				};
				\addlegendentry{$K=2^5$}
				\addplot [semithick, black, dashed]
				table {%
					0.125 2123.17060306392
					0.100341796875 1368.13393188537
					0.08056640625 882.008661932603
					0.06494140625 573.070751916467
					0.052001953125 367.455013620021
					0.041748046875 236.830259720581
					0.03369140625 154.242175921437
					0.027099609375 99.7908973707218
					0.021728515625 64.1541837572833
					0.017333984375 40.828334846669
					0.013916015625 26.3144733022868
					0.01123046875 17.1380195468263
					0.009033203125 11.0878774856358
					0.00732421875 7.28932778456697
					0.005859375 4.66516978212286
					0.004638671875 2.92383036692075
					0.003662109375 1.82233194614174
					0.0029296875 1.16629244553072
					0.00244140625 0.80992530939633
					0.001953125 0.518352198013651
				};
				\addlegendentry{$\mathcal O(\tau^2)$}
			\end{axis}

		\end{tikzpicture}
		
	\end{subfigure}%
	\caption{\small Errors for $\alpha=5$}
	\label{fig:a5}
\end{figure}
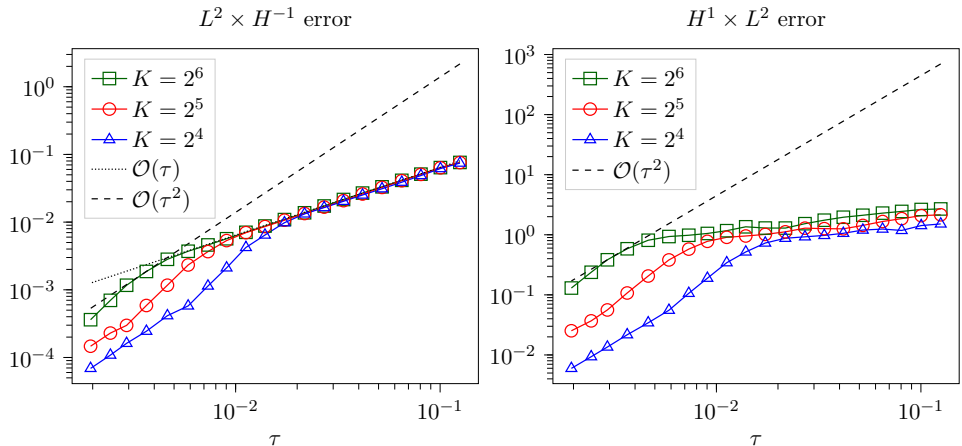

{Finally, we investigated the impact of the filter $\Pi_{\tau^{-1}}$ in \eqref{StrangFully} on the errors. If the errors are measured in $L^2 \times H^{-1}$, the removal of the filter did not result in any significant difference (not shown in the plots). In the right-hand panels of Figures \ref{fig:a3}, \ref{fig:a4} and \ref{fig:a5}, we included for comparison the $H^1 \times L^2$ errors of \eqref{StrangFully} in the case $K=2^7$ without the filter $\Pi_{\tau^{-1}}$. We observe that for $\alpha=3$, the omission of the filter leads to qualitatively similar results with a slightly better error constant. 
On the other hand, in the cases $\alpha \in \{4,5\}$, the introduction of the filter reduces the error at least for the greater values of $\tau$. 
Note however that the filter also increases the computational efficiency of the scheme {(regardless of $\alpha \in \{3,4,5\}$)}, as already pointed out in the lower-dimensional cases in \cite{Gauckler,DiscSol}.

\section*{Statements and Declarations}

\subsection*{Competing interests}

The author has no relevant financial or non-financial interests to disclose.
	
\printbibliography

\end{document}